    \theoremstyle{definition}\newtheorem{rema}{Remark}[section]
    \theoremstyle{plain}
    \newtheorem{theo}[rema]{Theorem}
    \newtheorem{defi}[rema]{Definition}
    \newtheorem{lemma}[rema]{Lemma}
    \newtheorem{corol}[rema]{Corollary}
    \newtheorem{proposition}[rema]{Proposition}
    \theoremstyle{definition}
\newcommand{\qpej}[2]{(q^{2#1};q^2)_{#2}}
\newcommand{\qpoj}[2]{(-q^{2#1};q^2)_{#2}}
\newcommand{\ignore}[1]{}
\newcommand{\nh}[3]{{}_#1h^{\left( #3 \right)}_{#2}}
\newcommand{\rnh}[4]{{}_#1^#4h^{\left( #3 \right)}_{#2}}
\newcommand{\rnht}[4]{{}_#1^#4{\Tilde{h}}^{\left( #3 \right)}_{#2}}
\numberwithin{equation}{section}
\begin{document}
\title{Ghost series and a motivated proof of the Bressoud-G{\"o}llnitz-Gordon identities
}
\author{John Layne, Samuel Marshall, Christopher Sadowski, Emily Shambaugh}
\date{}
\bibliographystyle{alpha}
\maketitle

\begin{abstract}
We present what we call a ``motivated proof" of the Bressoud-G{\"o}llnitz-Gordon partition identities. Similar ``motivated proofs" have been given by Andrews and Baxter for the Rogers-Ramanujan identities and by Lepowsky and Zhu for Gordon's identities. Additionally, ``motivated proofs" have also been given for the Andrews-Bressoud partition identities by Kanade, Lepowsky, Russell, and Sills and for the G{\"o}llnitz-Gordon-Andrews identities by Coulson, Kanade, Lepowsky, McRae, Qi, Russell, and the third author. Our proof borrows both the use of ``ghost series" from the ``motivated proof" of the Andrews-Bressoud identities and uses recursions similar to those found in the ``motivated proof" of the G{\"o}llnitz-Gordon-Andrews identities. We anticipate that this ``motivated proof" of the Bressoud-G{\"o}llnitz-Gordon identities will illuminate certain twisted vertex-algebraic constructions.
\end{abstract}

\section{Introduction}

The Rogers-Ramanujan partition identities are the pair of partition identities which we write as
\begin{equation}\label{RR1}
    \prod_{n \ge 1, n \not\equiv  0,\pm 2 \bmod 5} \frac{1}{1-q^n} = \sum_{m \ge 0}p_1(m)q^m
\end{equation}
and
\begin{equation}\label{RR2}
    \prod_{n \ge 1, n \not\equiv  0,\pm 1 \bmod 5} \frac{1}{1-q^n} = \sum_{m \ge 0}p_2(m)q^m,
\end{equation}
where $p_1(m)$ enumerates the number of partitions of $m$ where adjacent parts have difference at least $2$ and $p_2(m)$ enumerates the number of partitions of $m$ where adjacent parts have difference at least $2$ and in which no $1$ appears. We shall refer to the right-hand side of identities such as (\ref{RR1}) and (\ref{RR2}) as ``sum sides." We note that in (\ref{RR1}) and (\ref{RR2}), the left-hand sides count the number of partitions into parts $\pm 1 \bmod 5$ and the number of partitions into parts $\pm 2 \bmod 5$, respectively. We refer to left-hand sides such as these as ``product sides." In \cite{AB}, related to work by Baxter in \cite{B}, Andrews and Baxter gave what they called a ``motivated proof" of the Rogers-Ramanujan identities. The initial motivation was a question posed by Leon Ehrenpreis: By definition, it is clear that $p_1(m) \ge p_2(m)$ for all $m \ge 0$. Can one see this fact from the product sides of the identities without knowledge of the Rogers-Ramanujan identities? That is, without knowledge of (\ref{RR1}) and (\ref{RR2}), and starting only with the product sides of these identities, can one show that 
\begin{equation}
    \prod_{n \ge 1, n \not\equiv 0, \pm 2 \bmod 5}\frac{1}{1-q^n} - \prod_{n \ge 1, n \not\equiv 0, \pm 1 \bmod 5}\frac{1}{1-q^n} = \sum_{m \ge 0} a_m q^m,
\end{equation}
where $a_m \ge 0$ for all $m \ge 0$? In \cite{AB}, Andrews and Baxter answer this question in the affirmative and, in doing so, also prove the Rogers-Ramanujan identities.

The product sides of many partition identities arise naturally in the representation theory of vertex operator algebras (as in \cite{LM}). It is therefore natural to ask whether one can exhibit sum sides to these identities using vertex-algebraic techniques. In \cite{LW1}-\cite{LW4}, Lepowsky and Wilson exhibited these sum sides using monomials in principally twisted $Z$-operators. In their work, they gave a vertex-algebraic interpretation of the Gordon-Andrews-Bressoud identities and a purely vertex-algebraic proof of the Rogers-Ramanujan identities. Their proof, however, does not compare different product sides by taking various linear combinations of their product sides, but rather, considers the relevant representations (and thus the relevant products) individually. A related proof of the Gordon-Andrews-Bressoud identities was given in \cite{MP}. We refer the reader to the survey articles \cite{L} and \cite{K} for further reading about these approaches and recent advances. We also refer the reader to \cite{CMPP}, where new partition identities arising from representations of the affine Lie algebra $C_{\ell}^{(1)}$ have been found (these identities have been proved for the level $1$ case in \cite{DK} and \cite{Ru}). Another approach for obtaining such a sum side is to construct exact sequences among relevant representations of a given vertex operator algebra (for examples, see \cite{CLM1}-\cite{CLM2}, \cite{CalLM}, \cite{PSW}, \cite{BKRS}, and many others). Namely, such an exact sequence gives a recursion satisfied by the graded dimensions of various representations, whose solution readily leads to a sum side. In the context of exact sequences among representations, expressions in which the product sides are subtracted from one another with division by pure powers of the formal variable $q$ have a natural interpretation in terms of maps among the graded spaces of these representations. In particular, it is expected that the ideas in the motivated proofs in \cite{AB}, \cite{LZ}, \cite{CKLMQRS}, \cite{KLRS}, and the present work will lead to a ``categorification" of these identities in terms of certain twisted representations for certain generalized vertex operator algebras and exact sequences among these representations involving twisted intertwining operators (see \cite{DL}, \cite{Hu}). The program of finding ``motivated proofs" of partition identities is thus motivated by the desire to better understand these underlying vertex-algebraic structures. Such a program is underway. Thus, we shall use ``motivated proof" as a technical term and drop the quotations. For a much more detailed exposition of these ideas, we refer the reader to the introductions of \cite{LZ}, \cite{CKLMQRS}, and \cite{KLRS}. 

We now give a brief explanation of Andrews's and Baxter's motivated proof, which is related to Baxter's proof in \cite{B} and Rogers' and Ramanujan's proof in \cite{RR}. Let $G_1$ and $G_2$ denote the left-hand sides of (\ref{RR1}) and (\ref{RR2}). Empirically, Andrews and Baxter noticed that $G_3:=\frac{G_1-G_2}{q}$ is an element of $1 + q^3\mathbb{C}[[q]]$. More generally, they observed and proved that \begin{equation}\label{ABEmpHyp}
G_{j+2}:=\frac{G_{j} - G_{j+1}}{q^j} \in 1 + q^{j+2} \mathbb{C}[[q]] \end{equation}
for $j \ge 1$ and called (\ref{ABEmpHyp}) the Empirical Hypothesis. One important consequence to the Empirical Hypothesis, which is key in Andrews's and Baxter's proof, is the fact that $$\lim_{j \to \infty}G_j = 1$$
(here we say the limit exists if the coefficient of each power of $q$ stabilizes as $j \to \infty$). To answer Ehrenpreis's question, using the recursive definition of $G_j$, Andrews and Baxter expressed $G_3$  as
\begin{equation}
    G_3 := A_{j}G_{j} + B_{j} G_{j + 1},
\end{equation}
where $j \ge 3$ and $A_{j},B_{j} \in \mathbb{C}[q]$. They then give combinatorial interpretations for $A_j$ and $B_j$ and showed that $\lim_{j \to \infty}B_j= 0$ and $A_\infty:= \lim_{j \to \infty}A_j$ exists and counts partitions of positive integers into parts which differ by at least $2$ and in which $1$ and $2$ do not appear. This, in turn, gives
\begin{equation}\label{G3lim}
    G_3 = \lim_{j \to \infty}\left(A_jG_j + B_jG_{j+1}\right) = \lim_{j \to \infty}\left(A_j\right)\lim_{j \to \infty}\left(G_j\right) + \lim_{j \to \infty}\left(B_j\right) \lim_{j \to \infty}\left(G_{j+1}\right) = A_\infty,
\end{equation}
thus answering Ehrenpreis's question. Repeating this procedure for $G_1$ and $G_2$ leads to a proof of the Rogers-Ramanujan identities. It is now natural to ask: given the product sides of a family of partition identities, and without knowledge of the sum sides of these identities, can one deduce the sum sides of these identities using similar techniques?

Andrews's and Baxter's motivated proof has since been extended to several other partition identities. In general, much of the structure of these proofs is similar. Of particular importance in these proofs are recursions generalizing and extending the recursive definition in (\ref{ABEmpHyp}). We briefly review the recursive definitions which extend (\ref{ABEmpHyp}). In \cite{LZ}, Lepowsky and Zhu generalized Andrews's and Baxter's proof to Gordon's identities (cf. \cite{G}), the odd-modulus generalization of the Rogers-Ramanujan identities. In their proof, Lepowsky and Zhu introduced what they call ``shelves." In particular, let $G_i$ for $1 \le i \le k$ be the product sides of Gordon's identities $\bmod 2k+1$ (when $k=2$, these are just the product sides of (\ref{RR1}) and (\ref{RR2})). These expressions live on what is called ``shelf $0$." Next, given a series on ``shelf $j$," denoted by $G_{(k-1)j+i}$ for some $j\ge 0$ and $1 \le i \le k$, Lepowsky and Zhu then define the series on ``shelf $j+1$" by the tautological expression $G_{(k-1)(j+1)+1} = G_{(k-1)j+k}$ and by
\begin{equation}\label{LZRecursions}
    G_{(k-1)(j+1) + i}:=\frac{G_{(k-1)j + k-i+1} - G_{(k-1)j + k-i+2}}{q^{j(i-1)}} 
\end{equation}
for $2 \le i \le k$. The Empirical Hypothesis in \cite{LZ} states that 
\begin{equation}\label{LZEH}
    G_{(k-1)j + i}\in \begin{cases}  1 + q^{j+1}\mathbb{C}[[q]] & \text{if } 1 \le i \le k-1\\
     1 + q^{j+2}\mathbb{C}[[q]] & \text{if } i=k.
    \end{cases}
\end{equation} 
In \cite{CKLMQRS}, a motivated proof of the G{\"o}llnitz-Gordon-Andrews identity is given (cf. \cite{Gol}, \cite{A2}, and Chapter 7 of \cite{A4}). As in \cite{LZ}, the authors define $G_i$ for $1 \le i \le k$ to be the product sides of the G{\"o}llnitz-Gordon-Andrews identities $\bmod \ 4k$. For these identities, a more complicated-looking recursion is needed in place of (\ref{LZRecursions}). In particular, the series on shelf $j+1$ for $j \ge 0$ are defined by 
\begin{equation}
    G_{(k-1)(j+1)+1} = G_{(k-1)j+k}
\end{equation}
and 
\begin{equation}\label{CKLMQRSRecursions}
    G_{(k-1)(j+1) + i}:=\frac{G_{(k-1)j + k-i+1} - G_{(k-1)j + k-i+2}}{q^{2j(i-1)}} -q^{-1}G_{(k-1)(j+1)+i-1}
\end{equation}
for $2\leq i\leq k$.
A key difference between (\ref{LZRecursions}) and (\ref{CKLMQRSRecursions}) is the use of the $(i-1)^{\mathrm{st}}$ series on shelf $j+1$ to define the $i^{\mathrm{th}}$ series on shelf $j+1$. Lastly, in \cite{KLRS}, a motivated proof of the Andrews-Bressoud $\bmod\  2k$ partition identities is given. As before, let $G_i$ for $1 \le i \le k$ denote the product sides of the Andrews-Bressoud identities $\bmod \ 2k$ (in \cite{KLRS}, they are denoted $B_i$). The recursion needed to define the higher shelves in \cite{KLRS} is quite different than (\ref{LZRecursions}) and (\ref{CKLMQRSRecursions}). In particular, due to parity conditions arising in the sum side of these identities (see condition (2) in the introduction to \cite{KLRS}), division by a pure power of $q$ is replaced by division by a sum of two powers of $q$. This type of division has no obvious interpretation in the vertex-algebraic framework discussed above and is thus not ``motivated." In order to define the series $G_{(k-1)(j+1)+i}$ on the higher shelves using division by pure powers of $q$, the authors introduce what they call ``ghost series." They denote these series by $\tilde{G}_{(k-1)(j+1)+i}$ for $2 \le i\le k$ on each shelf and define the higher shelves as follows: 
\begin{equation}
    G_{(k-1)(j+1) + 1} = G_{(k-1)j+k},
\end{equation}
\begin{equation}\label{KLRSGhost1}
    G_{(k-1)(j+1)+2} = \frac{G_{(k-1)j+k-1} - \tilde{G}_{(k-1)j+k}}{q^{j+1}} = \tilde{G}_{(k-1)j+k},
\end{equation}
and
\begin{equation}\label{KLRSGhost2}
    G_{(k-1)(j+1)+i} = \frac{G_{(k-1)j+k-i+1}-\tilde{G}_{(k-1)j+k-i+2}}{q^{(j+1)(i-1)}} = \frac{\tilde{G}_{(k-1)j+k-i+2} - G_{(k-1)j+k-i+3}}{q^{(j+1)(i-2)}}.
\end{equation}
We note here that (\ref{KLRSGhost1}) and (\ref{KLRSGhost2}) also serve as definitions for the ghost series. In all of these works, once the series on the higher shelves have been established and a general formula for them has been proved, it is straightforward to show that a variant of the Empirical Hypothesis (\ref{LZEH}) holds.  

We now discuss the identities that are the focus of the present work and compare the approach we use to the approaches in \cite{LZ}, \cite{CKLMQRS}, and \cite{KLRS}. The Bressoud-G{\"o}llnitz-Gordon identities, proved by Bressoud (\cite{Br}), are an extension of the G{\"o}llnitz-Gordon-Andrews identities to moduli of the form $4k-2$. In particular, we use the statement of these identities as presented in Corollary 1.3 of \cite{CoLoMa}, with $i$ replaced by $k-i+1$ in the statement of the theorem. For $2 \le i \le k$, these identities state
\begin{equation}\label{BGG}
    \prod_{n \ge 0} \frac{(1+q^{2n+1})(1-q^{2k-2i+1 + (4k-2)n})(1-q^{2k+2i-3+(4k-2)n})(1-q^{(4k-2)(n+1)})}{1-q^{2n+2}} = \sum_{n \ge 0} a_{i}(n)q^n,
\end{equation}
where $a_i(n)$ enumerates partitions $\lambda$ of $n$ such that:
\begin{enumerate}
    \item Each odd part appears at most once,
    \item $f_1(\lambda) + f_2(\lambda) \le k-i$,
    \item $f_{2t}(\lambda) + f_{2t+1}(\lambda) + f_{2t+2}(\lambda) \le k-1$ for all $t\geq 0$,
    \item If $f_{2t}(\lambda) + f_{2t+1}(\lambda) + f_{2t+2}(\lambda) = k-1$, then
    \begin{equation}\label{parity}
        tf_{2t}(\lambda) + (t+1)(f_{2t+1}(\lambda) + f_{2t+2}(\lambda)) \equiv k-i+V^o_\lambda(t) \bmod 2,
    \end{equation}
\end{enumerate}
where $f_t(\lambda)$ denotes the number of occurrences of $t$ in $\lambda$ and $V^o_\lambda(t)$ denotes the number of odd parts in $\lambda$ which do not exceed $2t$. We note here that the left-hand side of (\ref{BGG}) enumerates partitions where:
\begin{enumerate}
    \item Even parts are multiples of $4$ not divisible by $8k-4$,
    \item Odd parts are not congruent to $\pm(2k-2i+1) \bmod 4k-2$ with parts congruent to $2k-1 \bmod 4k-2$ appearing at most once.
\end{enumerate}
(For some interesting recent work related to these identities, we refer the reader to \cite{HWZ}, \cite{HJZ1}, \cite{HJZ2}, and \cite{HZ}.)

In the motivated proof in the present work, we define our series $G_i$ for $1 \le i \le k$ on shelf $j=0$ using the left-hand side of (\ref{BGG}), extending the definition to $i=1$. As in \cite{KLRS}, condition 4 prevents us from defining the higher shelves using solely the series $G_i$ and division by pure powers of $q$.  In order to define the series on the higher shelves, we use a mix of ideas from \cite{CKLMQRS} and \cite{KLRS}. In particular, for $j \ge 0$, we have, tautologically, that $G_{(k-1)(j+1)+1} = G_{(k-1)j+k}$, and we introduce the series $\tilde{G}_{(k-1)j+i}$ for $2 \le i \le k$, which we call ``ghost series." We define both the ghost series and the series on shelf $j+1$ by the recursions
\begin{equation}
    G_{(k-1)(j+1)+2} = \frac{G_{(k-1)j+k-1}-\tilde{G}_{(k-1)j + k}}{q^{2(j+1)}} - q^{-1}G_{(k-1)(j+1)+1} = \tilde{G}_{(k-1)j+k}
\end{equation}
and
\begin{align}
    G_{(k-1)(j+1)+i} &= \frac{G_{(k-1)j + k-i+1}- \tilde{G}_{(k-1)j+k-i+2}}{q^{2(j+1)(i-1)}} - q^{-1}G_{(k-1)(j+1) + i-1}\\
    &= \frac{\tilde{G}_{(k-1)j+k-i+2} - G_{(k-1)j+k-i+3}}{q^{2(j+1)(i-2)}} - q^{-1}G_{(k-1)(j+1)+i-1}
\end{align}
for $3 \le i \le k$. We note that these recursions use ideas from (\ref{CKLMQRSRecursions}) - (\ref{KLRSGhost2}). In particular, we will also show that the ghost series, $\tilde{G}_i$ for $2 \le i \le k$, are  generating functions which enumerate partitions satisfying the same conditions as $a_i(n)$ but the right-hand side of (\ref{parity}) is replaced by $ k-i+1+V^o_\lambda(t) \bmod 2$.

We also note one more important difference between the proof in the present work and the proofs in \cite{AB}, \cite{LZ}, \cite{CKLMQRS}, and \cite{KLRS}. In these works, the recursions used to define the higher shelves are ``reversed" in order to write the series $G_i$ as polynomial linear combinations of series on higher shelves:
\begin{equation}
G_{i}=\nh{i}{1}{j}G_{(k-1)j+1} + \nh{i}{2}{j}G_{(k-1)j+2} + \dots + \nh{i}{k}{j}G_{(k-1)j+k},
\end{equation}
where the polynomials $\nh{i}{\ell}{j}$, for $1 \le \ell \le k$, are generalizations of $A_j$ and $B_j$ above. The authors then take the limit as $j \to \infty$ to obtain an argument similar to (\ref{G3lim}):
\begin{align*}
    G_{i}&=\lim_{j \to \infty}\left(\nh{i}{1}{j}G_{(k-1)j+1} + \nh{i}{2}{j}G_{(k-1)j+2} + \dots + \nh{i}{k}{j}G_{(k-1)j+k}\right)\\
    &= \left(\lim_{j \to \infty} \nh{i}{1}{j}\right)\left( \lim_{j \to \infty} G_{(k-1)j+1}\right)+ \dots + \left(\lim_{j \to \infty} \nh{i}{k}{j}\right)\left( \lim_{j \to \infty} G_{(k-1)j+k}\right)\\
    &= \nh{i}{1}{\infty} \cdot 1 + 0 \cdot 1 + \dots + 0 \cdot 1\\
    &=\nh{i}{1}{\infty}
\end{align*}
to complete their motivated proof, where it is clear from a matrix interpretation of their recursions that each of the limits, $\lim_{j \to \infty} \nh{i}{\ell}{j}$, exist for $1 \le \ell \le k$.  In the present work, it may not be the case that $\lim_{j \to \infty} \nh{i}{\ell}{j}$ in general exists, so we need to consider a more intricate argument. Indeed, we show that 
$\lim_{j \to \infty} \left(\nh{i}{1}{j} + \nh{i}{2}{j}\right)$
exists and require a slightly more intricate argument to conclude that $$G_i = \lim_{j \to \infty} \left(\nh{i}{1}{j} + \nh{i}{2}{j}\right)$$ for $1 \le i \le k$.

The present work is structured as follows: In Section 2, we recall certain standard notation 
regarding $q$-series and introduce the  official series $G_{\ell}$ and the ghost series $\tilde{G}_{\ell}$ which are the main objects of our study. In Section 3, we derive closed-form expressions for $G_{\ell}$ and $\tilde{G}_{\ell}$. In Section 4, these closed-form expressions are used to prove an Empirical Hypothesis for both the official series and the ghost series. In Section 5, we use the recursive definitions of the official series and ghost series to provide a matrix interpretation and write our series $G_i$, for $1 \le i \le k$, as polynomial linear combinations of series on higher shelves. In Section 6, we provide combinatorial interpretations for the polynomials from Section 5 and use them to complete our proof of the Bressoud-G{\"o}llnitz-Gordon identities. We also provide a combinatorial interpretation of the ghost series. In Section 7, we provide a dictionary between our closed-form expressions and specialization of the series $\tilde{J}(a;x;q)$ in \cite{CoLoMa}. We also give $(a;x;q)$-expressions governing the ghost series. In addition, we give combinatorial interpretations for these series and explore their properties.

\section*{Acknowledgement}
All four authors of this work are funded by NSF grant 1851948. This work was carried out as part of Ursinus College's NSF REU program during Summer 2023. We thank Matthew Russell for helpful comments on an early draft of this work and for helpful conversations.

\section{The formal series $G_{\ell}$ and $\tilde{G}_{\ell}$}

In this section, we establish notation that we will use throughout the paper. Throughout this work, let $k \ge 2$ be an integer and $a$, $x$, and $q$ be formal variables. All power series in this work are formal power series in $a$, $x$, and $q$.

Suppose $n$ is a nonnegative integer and $\lambda = (b_1,b_2,\dots,b_s)$ is a partition of $n$, with $b_1\geq\hdots\geq b_s$. For $t \in \mathbb{N}$, use $f_t(\lambda)$ to denote the number of occurrences of $t$ in $\lambda$ and $V^o_\lambda(t)$ to denote the number of odd parts in $\lambda$ which do not exceed $2t$. We also use the following standard $q$-Pochhammer notation throughout this work:
\begin{equation}
    (a;q)_n := (1-a)(1-aq)\cdots (1-aq^{n-1})
\end{equation}
and
\begin{equation}
    (a;q)_\infty:= \prod_{n \ge 1}(1-aq^{n-1}).
\end{equation}
We also will use
\begin{equation}
    (q)_n:=(q;q)_n
\end{equation}
and
\begin{equation}
 (q)_{\infty}:= (q;q)_\infty.
\end{equation}
Finally, we define
\begin{equation}
    (a_1,a_2,\dots, a_k;q)_n := (a_1;q)_n (a_2;q)_n \cdots (a_k;q)_n
\end{equation}
and similarly define
\begin{equation}
    (a_1,a_2,\dots, a_k;q)_\infty := (a_1;q)_\infty (a_2;q)_\infty \cdots (a_k;q)_\infty.
\end{equation}

Our main object of study will be the formal power series we denote by $G_{\ell}$ for integers $\ell \ge 1$. When $2 \le \ell \le k$, the $G_{\ell}$ denote the product sides of the Bressound-G{\"o}llnitz-Gordon identities. In particular, for $1 \le i \le k$, we define the series $G_i$ as presented in Corollary 1.3 of \cite{CoLoMa}. In particular, let
\begin{equation}
     G_i:= \frac{(-q;q^2)_{\infty}(q^{2k-2i+1},q^{2k+2i-3},q^{4k-2};q^{4k-2})_{\infty}}{(q^2;q^2)_{\infty}}.
\end{equation}
When $2 \le i \le k$, this is the generating function for partitions satisfying the conditions:
\begin{enumerate}
    \item Even parts are multiples of $4$ not divisible by $8k-4$,
    \item Odd parts are not congruent to $\pm(2k-2i+1) \bmod 4k-2$ with parts congruent to $2k-1 \bmod 4k-2$ appearing at most once.
\end{enumerate}
Note that we have replaced $i$ with $k-i+1$ and have also defined $G_i$ for $i=1$, as this will be necessary in our work.
We note that
\begin{equation}
    \frac{(-q;q^2)_\infty}{(q^2;q^2)_\infty} = \prod_{m \not\equiv 2 \bmod 4}\frac{1}{1-q^m},
\end{equation}
and write
\begin{equation}
    F(q) = \prod_{m \not\equiv 2 \bmod 4}(1-q^m).
\end{equation}
We note that, using Ramanujan's notation, we may write $F(q)$ as $\psi(-q)$, where
$$\psi(q) = f(q,q^3)$$ and $$f(a,b) = (-a,-b,ab;ab)_\infty.$$
We recall the Jacobi Triple Product identity
\begin{equation}\label{JTPIfirst}
    \sum_{n \in \mathbb{Z}} (-1)^nz^nq^{n^2} = (q^2,zq,z^{-1}q;q^2)_{\infty}.
\end{equation}
We rewrite the left-hand side of (\ref{JTPIfirst}) as
\begin{equation}
    \sum_{n \in \mathbb{Z}}(-1)^n z^n q^{n^2} = \sum_{n \ge 0} (-1)^n z^n q^{n^2}\left(1-z^{-2n-1}q^{2n+1}\right)
\end{equation}
so that our Jacobi Triple Product Identity is
\begin{equation}\label{JTPI}
\sum_{n \ge 0} (-1)^n z^n q^{n^2}\left(1-z^{-2n-1}q^{2n+1}\right) = (q^2,zq,z^{-1}q;q^2)_{\infty}.
\end{equation}
Making the substitution $q \rightarrow q^{2k-1}$ and $z \rightarrow q^{2i-2}$,  (\ref{JTPI}) becomes
\begin{equation}\label{JTPIrewrite}
    \sum_{n \geq 0} (-1)^n q^{(4k-2)\binom{n}{2} +(2i+2k-3)n} \left(1-q^{(2k-2i+1)(2n+1)}\right) =  (q^{4k-2},q^{2k-2i+1},q^{2k+2i-3};q^{4k-2})_{\infty}.
\end{equation}
Now, using (\ref{JTPIrewrite}), we write
\begin{equation}\label{shelf0}
    G_i = \frac{1}{F(q)}  \sum_{n \ge 0} (-1)^n q^{(4k-2)\binom{n}{2} +(2i+2k-3)n} \left(1-q^{(2k-2i+1)(2n+1)}\right)
\end{equation}
for $1 \le i \le k$.

As in \cite{KLRS}, we define what we call ghost series $\tilde{G}_i$, where $2\leq i\leq k$, as well as the series $G_{k+h}$ for $1\leq h\leq k-1$. In particular, define
\begin{equation}\label{ghostshelf0-1def}
    G_{k+1} = \frac{G_{k-1}-\tilde{G}_k}{q^2} - q^{-1}G_k = \tilde{G}_k
\end{equation}
and
\begin{equation}\label{ghostshelf0-2def}
    G_{k+h} = \frac{G_{k-h}-\tilde{G}_{k-h+1}}{q^{2h}}-q^{-1}G_{k+h-1}
    =\frac{\tilde{G}_{k-h+1} - G_{k-h+2}}{q^{2(h-1)}} - q^{-1}G_{k+h-1}
\end{equation}
for $2\leq h \leq k-1.$
\begin{rema}
    As in \cite{KLRS}, we have not defined the series $\tilde{G}_{1}$ as it is not necessary in our proofs. We will, however, attach a combinatorial meaning to this series in Remark \ref{Ghost1} and explore this series in more generality in Section 7.
\end{rema}
We note that (\ref{ghostshelf0-1def}) and (\ref{ghostshelf0-2def}) give us
\begin{equation}\label{ghostshelf0}
    \tilde{G}_{i} = \frac{G_{i-1} + q^{2}G_{i+1}}{1+q^2}
\end{equation}
for $2\leq i \leq k-1$ and
\begin{equation}\label{ghostshelf1}
    \tilde{G}_k = \frac{G_{k-1}-qG_k}{1+q^2}.
\end{equation}
\begin{rema}
    Later in our work, we will prove that the ghost series, $\tilde{G}_i$ for $2 \le i \le k$, enumerate partitions $\lambda$ which satisfy all the same conditions as those enumerated by $G_{i}$ for $2 \le i \le k$ with a change in a parity condition. Namely, we will show that $G_i$ enumerates partitions $\lambda$ satisfying:
    \begin{enumerate}
    \item Each odd part appears at most once,
    \item $f_1(\lambda) + f_2(\lambda) \le k-i$,
    \item $f_{2t}(\lambda) + f_{2t+1}(\lambda) + f_{2t+2}(\lambda) \le k-1$ for all $t\geq 0$,
    \item If $f_{2t}(\lambda) + f_{2t+1}(\lambda) + f_{2t+2}(\lambda) = k-1$, then
    \begin{equation}\label{GiList}
        tf_{2t}(\lambda) + (t+1)(f_{2t+1}(\lambda) + f_{2t+2}(\lambda)) \equiv k-i+V^o_\lambda(t) \bmod 2,
    \end{equation}
\end{enumerate}
and $\tilde{G}_i$ enumerates partitions satisfying all the same conditions as $G_i$ with (\ref{GiList}) replaced by 
\begin{equation}
    tf_{2t}(\lambda) + (t+1)(f_{2t+1}(\lambda) + f_{2t+2}(\lambda)) \not\equiv k-i+V^o_\lambda(t) \bmod 2.
\end{equation}
\end{rema}

Next, we use (\ref{shelf0}), (\ref{ghostshelf0}), and (\ref{ghostshelf1}) to derive closed-form expressions for $\tilde{G}_i$, where ${2\le i\le k}$. 
\begin{proposition}\label{shelf0ghosts}
    For $2\leq i \leq k$, we have
    \begin{align*}
        &\tilde{G}_i = \frac{1}{F(q)}\frac{1}{1+q^{2}}\sum_{n \ge 0} (-1)^nq^{(4k-2)\binom{n}{2} + (2k+2i-5)n}(1+q^{2(2n+1)})(1-q^{(2k-2i+1)(2n+1)}).
    \end{align*}
\end{proposition}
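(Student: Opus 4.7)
The plan is to verify the formula in two separate cases, corresponding to the defining recursions (\ref{ghostshelf0}) and (\ref{ghostshelf1}), by direct substitution of (\ref{shelf0}). I multiply both sides of the claim by $F(q)(1+q^2)$ to reduce the goal to a manipulation of two sums of the shape $\sum_{n \ge 0}(-1)^n q^{(4k-2)\binom{n}{2}+(\mathrm{linear})n}(\cdot)$, both of which need to be brought to the common prefactor $q^{(4k-2)\binom{n}{2}+(2k+2i-5)n}$ appearing in the target.

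For the principal case $2 \le i \le k-1$, I start from $F(q)(1+q^2)\tilde G_i = F(q)G_{i-1} + q^2 F(q)G_{i+1}$. Substituting (\ref{shelf0}), the $G_{i-1}$-sum already has the desired prefactor with inner factor $(1-q^{(2k-2i+3)(2n+1)})$, whereas the $q^2 G_{i+1}$-sum initially carries prefactor $q^{(4k-2)\binom{n}{2}+(2k+2i-1)n+2}$ with inner factor $(1-q^{(2k-2i-1)(2n+1)})$. The key arithmetic identity is
\[
(2k+2i-1)n + 2 = (2k+2i-5)n + 2(2n+1),
\]
which absorbs the mismatch into a $q^{2(2n+1)}$ factor. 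Combining, the inner bracket of the merged sum becomes $1 - q^{(2k-2i+3)(2n+1)} + q^{2(2n+1)} - q^{(2k-2i+1)(2n+1)}$, which factors neatly as $(1 + q^{2(2n+1)})(1 - q^{(2k-2i+1)(2n+1)})$, yielding the claim.

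For the boundary case $i=k$, the same strategy works from $F(q)(1+q^2)\tilde G_k = F(q)G_{k-1} - qF(q)G_k$. The analogous identity
\[
(4k-3)n + 1 = (4k-5)n + (2n+1)
\]
rewrites the $-qG_k$-sum with the common prefactor times $-q^{2n+1}$. Combining with $F(q)G_{k-1}$, the inner bracket becomes $1 - q^{3(2n+1)} - q^{2n+1} + q^{2(2n+1)} = (1 + q^{2(2n+1)})(1 - q^{2n+1})$, which matches the target at $i=k$ since there $2k-2i+1 = 1$. There is no real obstacle beyond careful bookkeeping; the main point is that the recursions (\ref{ghostshelf0}) and (\ref{ghostshelf1}) are engineered so that, after the appropriate exponent shift, the second summand contributes precisely the factor $q^{2(2n+1)}$ that pairs with the $q^0$ of the first summand to produce the uniform factor $(1+q^{2(2n+1)})$ in the target.
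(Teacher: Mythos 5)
Your proposal is correct and follows essentially the same route as the paper: substitute the closed form (\ref{shelf0}) into $G_{i-1}+q^2G_{i+1}$ (for $2\le i\le k-1$) and $G_{k-1}-qG_k$ (for $i=k$), shift the exponent of the second sum to the common prefactor, and factor the merged bracket as $(1+q^{2(2n+1)})(1-q^{(2k-2i+1)(2n+1)})$. The exponent identities and the factorization you use are exactly the manipulations in the paper's proof, so nothing further is needed.
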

\begin{proof}
    First, for $2\leq i\leq k-1$, we have that 
    \begin{flalign*}
        &G_{i-1} + q^2G_{i+1} &\\
        &=\frac{1}{F(q)} \sum_{n \ge 0} (-1)^nq^{(4k-2)\binom{n}{2} + (2k+2i-5)n}(1-q^{(2k-2i+3)(2n+1)}) \\
        &\hphantom{=}+ \frac{1}{F(q)}\sum_{n \ge 0}(-1)^nq^{(4k-2)\binom{n}{2} + (2k+2i-1)n+2}(1-q^{(2k-2i-1)(2n+1)}) \\
        &= \frac{1}{F(q)} \sum_{n \ge 0 }(-1)^nq^{(4k-2)\binom{n}{2} + (2k+2i-5)n} \left( 1-q^{(2k-2i+3)(2n+1)} + q^{2(2n+1)}(1-q^{(2k-2i-1)(2n+1)})\right)\\
        &= \frac{1}{F(q)} \sum_{n \ge 0 }(-1)^nq^{(4k-2)\binom{n}{2} + (2k+2i-5)n}(1+q^{2(2n+1)})(1-q^{(2k-2i+1)(2n+1)}),
    \end{flalign*}
    which gives our result for $2\leq i\leq k-1$. We also have that 
    \begin{flalign}
        &G_{k-1} - qG_k &\\
        &= \frac{1}{F(q)}\sum_{n \ge 0} (-1)^n q^{(4k-2)\binom{n}{2} + (4k-5)n}(1-q^{3(2n+1)})\\
        &\hphantom{=}- \frac{1}{F(q)}\sum_{n \ge 0} (-1)^n q^{(4k-2)\binom{n}{2} + (4k-3)n+1}(1-q^{2n+1}) \\
        &=\frac{1}{F(q)} \sum_{n \ge 0} (-1)^n q^{(4k-2)\binom{n}{2} + (4k-5)n}\left(1-q^{3(2n+1)} - q^{2n+1}(1-q^{2n+1})\right)\\
        &=\frac{1}{F(q)} \sum_{n \ge 0} (-1)^n q^{(4k-2)\binom{n}{2} + (4k-5)n}(1+q^{2(2n+1)})(1-q^{2n+1}),
    \end{flalign}
    which gives the desired result for $\tilde{G}_k$.
\end{proof}

Note that we defined the series $G_1,\dots, G_k$ and $\tilde{G_2},\dots, \tilde{G}_k$, which, as in \cite{LZ}, \cite{KLRS}, and \cite{CKLMQRS}, we say are on shelf $j=0$. We now recursively define the series $G_{(k-1)j + i}$ for $ j \ge 1$ and $1 \le i \le k$. 
As in \cite{KLRS}, we also recursively define $\tilde{G}_{(k-1)j+i}$ for $j \ge 0$ and $2 \le i \le k$. Following \cite{KLRS} and \cite{CKLMQRS}, for $j\geq 0$, we define 
\begin{equation}\label{Gdef1}
    G_{(k-1)(j+1)+1} = G_{(k-1)j+k},
\end{equation}
\begin{equation}\label{Gdef2}
    G_{(k-1)(j+1)+2} = \frac{G_{(k-1)j+k-1}-\tilde{G}_{(k-1)j + k}}{q^{2(j+1)}} - q^{-1}G_{(k-1)(j+1)+1} = \tilde{G}_{(k-1)j+k},
\end{equation}
and
\begin{align}
    \label{Gdef3}
    G_{(k-1)(j+1)+i} &= \frac{G_{(k-1)j + k-i+1}- \tilde{G}_{(k-1)j+k-i+2}}{q^{2(j+1)(i-1)}} - q^{-1}G_{(k-1)(j+1) + i-1}\\
    \label{Gdef4}
    &= \frac{\tilde{G}_{(k-1)j+k-i+2} - G_{(k-1)j+k-i+3}}{q^{2(j+1)(i-2)}} - q^{-1}G_{(k-1)(j+1)+i-1}
\end{align}
for $3 \le i \le k$.
\begin{rema}
    The definition given by (\ref{Gdef1}) is immediate from the fact that 
    \begin{equation}
        (k-1)(j+1) + 1 = (k-1)j+k.
    \end{equation}
    We make note of this here, however, since in the next section we will give different expressions for $G_{(k-1)(j+1)+1}$ and $G_{(k-1)j + k}$ and will need to show that they are, in fact, equal. As in \cite{LZ}, \cite{KLRS}, and \cite{CKLMQRS}, we will call this ``edge-matching."
\end{rema}

We note here that, using (\ref{Gdef1}) - (\ref{Gdef4}), for $j\geq 0$, the ghosts may be explicitly defined by
\begin{equation}\label{GhostRec1} 
    \tilde{G}_{(k-1)j+i} = \frac{G_{(k-1)j+i-1} + q^{2(j+1)}G_{(k-1)j + i+1}}{1+q^{2(j+1)}}
\end{equation}
for $2\leq i\leq k-1$ and
\begin{equation}\label{GhostRec2}
    \tilde{G}_{(k-1)j+k} = \frac{G_{(k-1)j+k-1} - q^{2j+1}G_{(k-1)j+k}}{1+q^{2(j+1)}}.
\end{equation}

\section{Closed form of $G_{\ell}$ and $\tilde{G_{\ell}}$}
In this section, we provide a closed-form expression for the official series $G_{\ell}$ and the ghost series $\tilde{G}_{\ell}$.

\begin{theo}\label{ClosedFormG}
    For $j \ge 0$ and $1 \le i \le k$, we have that 
    \begin{equation}\label{GeneralG}
        G_{(k-1)j+i} \in \mathbb{C}[[q]]
    \end{equation}
    and, in fact,
    \begin{align}\label{Gs}
        &G_{(k-1)j+i} \nonumber\\
        &=\frac{1}{F(q)}\sum_{n \ge 0}\frac{(-1)^n q^{(4k-2)\binom{n}{2}+n(2k(j+1)+2(i-j)-3)}(-q^{2j+2};q^2)_n(q^{2(n+1)};q^2)_j}{(-q^2;q^2)_n(-q^{2n+1};q^2)_{j+1}}\\
        &\hspace{3.5em}\cdot\left(1-q^{2(2n+j+1)(k-i+1)} + q^{2n+2j+1}(1-q^{2(2n+j+1)(k-i)})\right).\nonumber
    \end{align}
Moreover, for $j \ge 0$ and $2 \le i \le k$, we have
\begin{equation}
    \tilde{G}_{(k-1)j+i} \in \mathbb{C}[[q]]
\end{equation}
and, in fact,
\begin{align}\label{gGs}
        &\tilde{G}_{(k-1)j+i} \nonumber\\
        &=\frac{1}{F(q)}\frac{1}{1+q^{2j+2}}\sum_{n \ge 0}\frac{(-1)^n q^{(4k-2)\binom{n}{2}+n(2k(j+1)+2(i-j-1)-3)}(-q^{2j+2};q^2)_n\qpej{(n+1)}{j}}{(-q^2;q^2)_n\qpoj{n+1}{j+1}}\\
     &\hspace{7.5em}\cdot(1+q^{2(2n+j+1)})\left(1-q^{2(2n+j+1)(k-i+1)} + q^{2n+2j+1}(1-q^{2(2n+j+1)(k-i)})\right).\nonumber
    \end{align}
\end{theo}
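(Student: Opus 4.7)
Write $H_{(k-1)j+i}$ for the right-hand side of (\ref{Gs}) and $\tilde{H}_{(k-1)j+i}$ for the right-hand side of (\ref{gGs}); the plan is to prove by induction on $j \ge 0$ that $G_{(k-1)j+i} = H_{(k-1)j+i}$ for $1\le i\le k$ and $\tilde{G}_{(k-1)j+i} = \tilde{H}_{(k-1)j+i}$ for $2\le i\le k$. The claim that both series lie in $\mathbb{C}[[q]]$ will follow for free from the closed form, since the denominators $F(q)$, $(-q^2;q^2)_n$, $(-q^{2n+1};q^2)_{j+1}$, and $1+q^{2j+2}$ have constant term $1$ in $q$, while the numerator contributes only non-negative powers of $q$ that grow in $n$. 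The base case $j=0$ is immediate: $(q^{2(n+1)};q^2)_0 = 1$, $(-q^{2n+1};q^2)_1 = 1+q^{2n+1}$, and the algebraic identity
$$(1-q^{2(2n+1)(k-i+1)}) + q^{2n+1}(1-q^{2(2n+1)(k-i)}) = (1+q^{2n+1})(1-q^{(2n+1)(2k-2i+1)})$$
reduces $H_i$ to (\ref{shelf0}) and $\tilde{H}_i$ to Proposition \ref{shelf0ghosts} after cancellation of the common factor $1 + q^{2n+1}$.

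\textbf{Inductive step.} Assuming the identities on shelves $0,\ldots,j$, I establish the $G$-formulas on shelf $j+1$ by a nested induction on $i$. For $i=1$, the tautological definition (\ref{Gdef1}) reduces the task to the edge-matching identity $H_{(k-1)j+k} = H_{(k-1)(j+1)+1}$ at the level of closed forms. The $H_{(k-1)j+k}$ bracket collapses to $1-q^{2(2n+j+1)}$, so its $n=0$ summand vanishes; reindexing $n\mapsto n+1$ in the remaining sum and applying
$$(-q^{2j+2};q^2)_{n+1} = (1+q^{2j+2})(-q^{2j+4};q^2)_n, \qquad (q^{2(n+1)};q^2)_{j+1} = (1-q^{2(n+j+1)})(q^{2(n+1)};q^2)_j,$$
converts every shelf-$j$ Pochhammer into its shelf-$(j+1)$ analogue and reproduces $H_{(k-1)(j+1)+1}$.

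For $2\le i\le k$, apply recursion (\ref{Gdef3}). By the outer induction the shelf-$j$ series $H_{(k-1)j+k-i+1}$ and $\tilde{H}_{(k-1)j+k-i+2}$ are already known; both carry the same exponent of $q^n$ and the same prefactor, so their difference collapses to a single sum. Setting $A = q^{2(2n+j+1)}$, $B = q^{2n+2j+1}$, $D = 1+q^{2n+1}$, elementary manipulation using $A+B = BD$ rewrites the bracket difference as
$$\frac{(1+B)q^{2j+2}(1-q^{4n}) + A^{i-2}BD(1-q^{4(n+j+1)})}{1+q^{2j+2}}.$$
The second difference-of-squares factor is precisely the ratio of shelf-$(j+1)$ to shelf-$j$ prefactors (up to the denominator $(1+q^{2j+2})(1+q^{2n+2j+3})$), while the first vanishes at $n=0$ and is absorbed into $(q^{2(n+1)};q^2)_{j+1}$ after the shift $n\mapsto n+1$. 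Dividing by $q^{2(j+1)(i-1)}$ and subtracting $q^{-1}H_{(k-1)(j+1)+i-1}$, known by the inner induction, assembles the shelf-$(j+1)$ prefactor and bracket of $H_{(k-1)(j+1)+i}$. The ghost-series identities on shelf $j+1$ then follow at once from (\ref{GhostRec1})-(\ref{GhostRec2}) applied to the freshly proved $H$-formulas, via the same bracket combination that produced Proposition \ref{shelf0ghosts}.

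\textbf{Main obstacle.} The most delicate step is edge-matching: the shelf-$j$ and shelf-$(j+1)$ forms of $H$ have different Pochhammer depths and different linear dependences of the exponent on $n$, so equality is not visible term-by-term and the $n \mapsto n+1$ reindexing must be combined carefully with the two Pochhammer identities above. The rest of the induction, while lengthy, is driven mechanically by these same shift identities together with the factorization $(1+q^{2(n+j+1)})(1-q^{2(n+j+1)}) = 1-q^{4(n+j+1)}$ that governs the shelf transition.
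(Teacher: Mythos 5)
Your overall strategy coincides with the paper's: the same double induction (outer on the shelf $j$, inner on $i$), the same base case via the factorization $1-q^{2(2n+1)(k-i+1)}+q^{2n+1}(1-q^{2(2n+1)(k-i)})=(1+q^{2n+1})(1-q^{(2n+1)(2k-2i+1)})$, the same collapse of $G_{(k-1)j+k-i+1}-\tilde{G}_{(k-1)j+k-i+2}$ into a single sum whose bracket is rewritten as $q^{2j+2}(1-q^{4n})(1+q^{2n+2j+1})+q^{2(i-2)(2n+j+1)}q^{2n+2j+1}(1+q^{2n+1})(1-q^{4(n+j+1)})$, the identification of the piece carrying $q^{2n-1}$ with $q^{-1}$ times the $(i-1)$-case (closed by the inner induction and the recursion (\ref{Gdef3})), and finally the ghost formulas on shelf $j+1$ obtained from (\ref{GhostRec1})--(\ref{GhostRec2}). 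That part of your proposal is sound and is exactly the paper's route.

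However, your edge-matching step --- which you yourself single out as the most delicate --- is wrong as stated. For $i=k$ the bracket in (\ref{Gs}) is $1-q^{2(2n+j+1)}$, which at $n=0$ equals $1-q^{2j+2}\neq 0$; the $n=0$ summand of your $H_{(k-1)j+k}$ does not vanish, so you cannot drop it and reindex the entire remaining sum by $n\mapsto n+1$. The correct mechanism --- the one the paper uses, and which you in effect describe for $2\le i\le k$ --- is to multiply by $\tfrac{1+q^{2j+2}}{1+q^{2j+2}}$ and split $1-q^{4n+2j+2}$ as $(1-q^{4(n+j+1)})+q^{2j+2}(1-q^{4n})$: the first piece stays at index $n$, with the factor $1-q^{2(n+j+1)}$ extending $(q^{2(n+1)};q^2)_j$ to $(q^{2(n+1)};q^2)_{j+1}$, while only the second piece (which does vanish at $n=0$) is shifted $n\mapsto n+1$ and absorbed using $(-q^{2j+2};q^2)_{n+1}=(1+q^{2j+2})(-q^{2j+4};q^2)_n$; the two resulting sums must then be recombined and the new bracket recognized as $1-q^{2k(2n+j+2)}+q^{2(n+j+1)+1}\left(1-q^{2(2n+j+2)(k-1)}\right)$, i.e. the $i=1$ bracket on shelf $j+1$. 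As written, your one-shot ``drop $n=0$ and shift'' would not reproduce $H_{(k-1)(j+1)+1}$ (it is not even an identity of formal power series), so this step needs to be repaired along the lines above; once it is, your proposal becomes the paper's proof.
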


\begin{proof}
Throughout our proof, we will use the notation $\overline{G}_{j,i}$ for the right-hand side of (\ref{Gs}) and $\overline{\tilde{G}}_{j,i}$ for the right-hand side of (\ref{gGs}). We will first show that $G_{(k-1)j+i} = \overline{G}_{j,i}$ for $j \ge 0$ and $1 \le i \le k$ by induction on $j$ and $i$.
We note that the $j=0$ case is given by (\ref{shelf0}) for $G_i$ with $1 \le i \le k$ and that Proposition \ref{shelf0ghosts} gives us the $j=0$ case of (\ref{gGs}) for $\tilde{G}_i$ with $2 \le i\le k$.
    
Suppose, for some $j\geq 0$, that $G_{(k-1)j+i}=\overline{G}_{j,i}$ for $1\leq i\leq k$ and that $\Tilde{G}_{(k-1)j+i}=\overline{\Tilde{G}}_{j,i}$ for $2\leq i\leq k$. We first show that $G_{(k-1)(j+1)+1}=\overline{G}_{j+1,1}$. Since $G_{(k-1)j+k} = G_{(k-1)(j+1)+1}$, we must show that ``edge-matching" holds, i.e. that $G_{(k-1)j+k} = \overline{G}_{j+1,1}$ Indeed, we have
\begin{align*}
        &G_{(k-1)j+k}\\
        &=\frac{1}{F(q)}\sum_{n \geq 0}\frac{(-1)^n q^{(4k-2)\binom{n}{2}+n(2k(j+1)+2(k-j)-3)}(-q^{2j+2};q^2)_n\qpej{(n+1)}{j}}{(-q^2;q^2)_n\qpoj{n+1}{j+1}}\\
            &\hspace{3.5em}\cdot\left(1-q^{2(2n+j+1)(k-k+1)} + q^{2n+2j+1}(1-q^{2(2n+j+1)(k-k)})\right) \\
        &=\frac{1}{F(q)}\sum_{n \geq 0}\frac{(-1)^n q^{(4k-2)\binom{n}{2}+n(2k(j+2)-2j-3)}(-q^{2j+2};q^2)_n\qpej{(n+1)}{j}}{(-q^2;q^2)_n\qpoj{n+1}{j+1}}\\
            &\hspace{3.5em}\cdot(1-q^{2(2n+j+1)}) \\
        &=\frac{1}{F(q)}\sum_{n \geq 0}\frac{(-1)^n q^{(4k-2)\binom{n}{2}+n(2k(j+2)-2j-3)}(-q^{2j+2};q^2)_n\qpej{(n+1)}{j}}{(-q^2;q^2)_n\qpoj{n+1}{j+1}}\\
            &\hspace{3.5em}\cdot(1-q^{2(2n+j+1)})\frac{1+q^{2j+2}}{1+q^{2j+2}} \\
        &=\frac{1}{F(q)}\sum_{n \geq 0}\frac{(-1)^n q^{(4k-2)\binom{n}{2}+n(2k(j+2)-2j-3)}(-q^{2j+2};q^2)_n\qpej{(n+1)}{j}}{(-q^2;q^2)_n\qpoj{n+1}{j+1}}\\
            &\hspace{3.5em}\cdot\frac{1+q^{2j+2}-q^{2(2n+j+1)}-q^{4(n+j+1)}}{1+q^{2j+2}} \\
        &=\frac{1}{F(q)}\sum_{n \geq 0}\frac{(-1)^n q^{(4k-2)\binom{n}{2}+n(2k(j+2)-2j-3)}(-q^{2j+2};q^2)_n\qpej{(n+1)}{j}}{(-q^2;q^2)_n\qpoj{n+1}{j+1}}\\
            &\hspace{3.5em}\cdot\frac{1-q^{4(n+j+1)}+q^{2j+2}(1-q^{4n})}{1+q^{2j+2}} \\
        &=\frac{1}{F(q)}\sum_{n \geq 0}\frac{(-1)^n q^{(4k-2)\binom{n}{2}+n(2k(j+2)-2j-3)}(-q^{2j+2};q^2)_n\qpej{(n+1)}{j}(1-q^{2(n+j+1)})}{(-q^2;q^2)_n\qpoj{n+1}{j+1}}\\
            &\hspace{3.5em}\cdot\frac{1+q^{2(n+j+1)}}{1+q^{2j+2}}\\
        &\hphantom{=}+\frac{1}{F(q)}\sum_{n \geq 1}\frac{(-1)^n q^{(4k-2)\binom{n}{2}+n(2k(j+2)-2j-3)}(-q^{2j+2};q^2)_n\qpej{(n+1)}{j}(1-q^{4n})}{(-q^2;q^2)_n\qpoj{n+1}{j+1}}\\
            &\hspace{4em}\cdot\frac{q^{2j+2}}{1+q^{2j+2}}\\
        &=\frac{1}{F(q)}\sum_{n \geq 0}\frac{(-1)^n q^{(4k-2)\binom{n}{2}+n(2k(j+2)-2j-3)}(-q^{2j+2};q^2)_n\qpej{(n+1)}{j+1}}{(-q^2;q^2)_n\qpoj{n+1}{j+1}}\\
            &\hspace{3.5em}\cdot\frac{1+q^{2(n+j+1)}}{1+q^{2j+2}}\\
        &\hphantom{=}-\frac{1}{F(q)}\sum_{n \geq 0}\frac{(-1)^n q^{(4k-2)\binom{n}{2}+n(2k(j+2)-2j-3)}(-q^{2j+2};q^2)_{n+1}\qpej{(n+2)}{j}(1-q^{4(n+1)})}{(-q^2;q^2)_{n+1}\qpoj{(n+1)+1}{j+1}}\\
            &\hspace{4em}\cdot\frac{q^{2j+2}}{1+q^{2j+2}}\cdot q^{(4k-2)n+2k(j+2)-2j-3}\\
        &=\frac{1}{F(q)}\sum_{n \geq 0}\frac{(-1)^n q^{(4k-2)\binom{n}{2}+n(2k(j+2)-2j-3)}(-q^{2j+2};q^2)_n\qpej{(n+1)}{j+1}}{(-q^2;q^2)_n\qpoj{n+1}{j+1}}\\
            &\hspace{3.5em}\cdot\frac{1+q^{2(n+j+1)}}{1+q^{2j+2}}\\
        &\hphantom{=}-\frac{1}{F(q)}\sum_{n \geq 0}\frac{(-1)^n q^{(4k-2)\binom{n}{2}+n(2k(j+2)-2j-3)}(-q^{2(j+1)+2};q^2)_{n}\qpej{(n+2)}{j}(1-q^{4(n+1)})}{(-q^2;q^2)_{n+1}\qpoj{(n+1)+1}{j+1}}\\
            &\hspace{4em}\cdot q^{(4k-2)n+2k(j+2)-2j-3}q^{2j+2}\\
        &=\frac{1}{F(q)}\sum_{n \geq 0}\frac{(-1)^n q^{(4k-2)\binom{n}{2}+n(2k(j+2)-2j-3)}(-q^{2j+2};q^2)_n\qpej{(n+1)}{j+1}}{(-q^2;q^2)_n\qpoj{n+1}{j+1}}\\
            &\hspace{3.5em}\cdot\frac{1+q^{2(n+j+1)}}{1+q^{2j+2}}\\
        &\hphantom{=}-\frac{1}{F(q)}\sum_{n \geq 0}\frac{(-1)^n q^{(4k-2)\binom{n}{2}+n(2k(j+2)-2j-3)}(-q^{2(j+1)+2};q^2)_{n}\qpej{(n+1)}{j+1}}{(-q^2;q^2)_{n}\qpoj{(n+1)+1}{j+1}}\\
            &\hspace{4em}\cdot q^{(4k-2)n+2k(j+2)-1}\\
        &=\frac{1}{F(q)}\sum_{n \geq 0}\frac{(-1)^n q^{(4k-2)\binom{n}{2}+n(2k(j+2)-2j-3)}(-q^{2(j+1)+2};q^2)_n\qpej{(n+1)}{j+1}}{(-q^2;q^2)_n\qpoj{n+1}{j+1}}\\
        &\hphantom{=}-\frac{1}{F(q)}\sum_{n \geq 0}\frac{(-1)^n q^{(4k-2)\binom{n}{2}+n(2k(j+2)-2j-3)}(-q^{2(j+1)+2};q^2)_{n}\qpej{(n+1)}{j+1}}{(-q^2;q^2)_{n}\qpoj{(n+1)+1}{j+1}}\\
            &\hspace{4em}\cdot q^{(4k-2)n+2k(j+2)-1}\\
        &=\frac{1}{F(q)}\sum_{n \geq 0}\frac{(-1)^n q^{(4k-2)\binom{n}{2}+n(2k(j+2)-2j-3)}(-q^{2(j+1)+2};q^2)_n\qpej{(n+1)}{j+1}}{(-q^2;q^2)_n\qpoj{n+1}{j+2}}\\
            &\hspace{3.5em}\cdot(1+q^{2(n+j+1)+1})\\
        &\hphantom{=}-\frac{1}{F(q)}\sum_{n \geq 0}\frac{(-1)^n q^{(4k-2)\binom{n}{2}+n(2k(j+2)-2j-3)}(-q^{2(j+1)+2};q^2)_{n}\qpej{(n+1)}{j+1}}{(-q^2;q^2)_{n}\qpoj{n+1}{j+2}}\\
            &\hspace{4em}\cdot q^{(4k-2)n+2k(j+2)-1}(1+q^{2n+1})\\
        &=\frac{1}{F(q)}\sum_{n \geq 0}\frac{(-1)^n q^{(4k-2)\binom{n}{2}+n(2k(j+2)-2j-3)}(-q^{2(j+1)+2};q^2)_n\qpej{(n+1)}{j+1}}{(-q^2;q^2)_n\qpoj{n+1}{j+2}}\\
            &\hspace{3.5em}\cdot \left(1+q^{2(n+j+1)+1}-q^{(4k-2)n+2k(j+2)-1}(1+q^{2n+1})\right)\\
        &=\frac{1}{F(q)}\sum_{n \geq 0}\frac{(-1)^n q^{(4k-2)\binom{n}{2}+n(2k(j+2)-2j-3)}(-q^{2(j+1)+2};q^2)_n\qpej{(n+1)}{j+1}}{(-q^2;q^2)_n\qpoj{n+1}{j+2}}\\
            &\hspace{3.5em}\cdot \left(1+q^{2(n+j+1)+1}-q^{(4k-2)n+2k(j+2)-1}-q^{(4k-2)n+2k(j+2)-1+2n+1}\right)\\
        &=\frac{1}{F(q)}\sum_{n \geq 0}\frac{(-1)^n q^{(4k-2)\binom{n}{2}+n(2k(j+2)-2j-3)}(-q^{2(j+1)+2};q^2)_n\qpej{(n+1)}{j+1}}{(-q^2;q^2)_n\qpoj{n+1}{j+2}}\\
            &\hspace{3.5em}\cdot \left(1+q^{2(n+j+1)+1}-q^{-2n+2k(2n+j+2)-1}-q^{2k(2n+j+2)} \right)\\
        &=\frac{1}{F(q)}\sum_{n \ge 0}\frac{(-1)^n q^{(4k-2)\binom{n}{2}+n(2k(j+2)-2j-3)}(-q^{2(j+1)+2};q^2)_n\qpej{(n+1)}{j+1}}{(-q^2;q^2)_n\qpoj{n+1}{j+2}}\\
            &\hspace{3.5em}\cdot \left(1-q^{2k(2n+j+2)} + q^{2(n+j+1)+1}(1-q^{2(2n+j+2)(k-1)})\right)\\
        &=\overline{G}_{j+1,1}.
\end{align*}

Now, suppose
$G_{(k-1)(j+1)+s}=\overline{G}_{j+1,s}$ for all $1\leq s\leq i-1$, where $i$ satisfies $1\leq i-1\leq k-1$. We will show that
\begin{align*}
    \overline{G}_{(k-1)(j+1)+i}+ q^{-1}G_{(k-1)(j+1) + i-1}&= \frac{G_{(k-1)j + k-i+1}- \tilde{G}_{(k-1)j+k-i+2}}{q^{2(j+1)(i-1)}}.
\end{align*}
We have
\begin{align}\label{geeandghostv1}
        &\frac{G_{(k-1)j + k-i+1}- \tilde{G}_{(k-1)j+k-i+2}}{q^{2(j+1)(i-1)}} \nonumber \\
        &= \frac{1}{q^{2(j+1)(i-1)}} \frac{1}{F(q)}\sum_{n \ge 0}\frac{(-1)^n q^{(4k-2)\binom{n}{2}+n(2k(j+2)-2(i+j)-1)}(-q^{2j+2};q^2)_n\qpej{(n+1)}{j}}{(-q^2;q^2)_n\qpoj{n+1}{j+1}} \nonumber \\     
            &\hspace{8.5em}\cdot \left( 1-q^{2i(2n+j+1)} + q^{2n+2j+1}(1-q^{2(i-1)(2n+j+1)}) \right) \nonumber  \\
        &\hphantom{=}-\frac{1}{(1+q^{2j+2})q^{2(j+1)(i-1)}}\frac{1}{F(q)} \sum_{n \ge 0}\frac{(-1)^n q^{(4k-2)\binom{n}{2}+n(2k(j+2)- 2(i +j)-1)}(-q^{2j+2};q^2)_n\qpej{(n+1)}{j}}{(-q^2;q^2)_n\qpoj{n+1}{j+1}} \nonumber \\ 
            &\hspace{13em}\cdot(1+q^{2(2n+j+1)})\left(1-q^{2( i -1 )(2n+j+1)} + q^{2n+2j+1}(1-q^{2(i-2)(2n+j+1)})\right) \nonumber \\
        &= \frac{1}{(1+q^{2j+2})} \frac{1}{F(q)}\sum_{n \ge 0}\frac{(-1)^n q^{(4k-2)\binom{n}{2}+n(2k(j+2)-2(i+j)-1)}(-q^{2j+2};q^2)_n\qpej{(n+1)}{j}}{(-q^2;q^2)_n\qpoj{n+1}{j+1}} \nonumber \\     
            &\hspace{8.5em} \cdot q^{-2(j+1)(i-1)}\bigg( (1+q^{2j+2} )\left(1-q^{2i(2n+j+1)} + q^{2n+2j+1}(1-q^{2(i-1)(2n+j+1)}) \right) \nonumber \\
            &\hspace{9.5em} - (1+q^{2(2n+j+1)}) \left( 1-q^{2( i -1 )(2n+j+1)} + q^{2n+2j+1}(1-q^{2(i-2)(2n+j+1)}) \right) \bigg). 
\end{align}

Notice that the term in the last two lines of (\ref{geeandghostv1}) can be rewritten as
\begin{align*}
        & q^{-2(j+1)(i-1)}\bigg( (1+q^{2j+2} )\Big(1-q^{2i(2n+j+1)} + q^{2n+2j+1}(1-q^{2(i-1)(2n+j+1)})\Big) \nonumber \\
            &\hspace{1em} - (1+q^{2(2n+j+1)}) \Big( 1-q^{2( i -1 )(2n+j+1)} + q^{2n+2j+1}(1-q^{2(i-2)(2n+j+1)}) \Big)\bigg)\\
       =& q^{-2(j+1)(i-1)}\bigg( (1+q^{2j+2} )\Big(1 + q^{2n+2j+1}-q^{2(i-1)(2n+j+1)}(q^{2n+2j+1}+q^{2(2n+j+1)})\Big) \nonumber \\
            &\hspace{1em} - (1+q^{2(2n+j+1)}) \Big( 1 + q^{2n+2j+1}-q^{2(i-2)(2n+j+1)}(q^{2n+2j+1}+q^{2(2n+j+1)}) \Big)\bigg)\\
        =& q^{-2(j+1)(i-1)}\bigg( (1+q^{2j+2} )\Big(1 + q^{2n+2j+1}-q^{2(i-1)(2n+j+1)}(q^{2n+2j+1}+q^{2(2n+j+1)})\Big) \nonumber \\
            &\hspace{1em} - \Big( 1 + q^{2n+2j+1}-q^{2(i-2)(2n+j+1)}(q^{2n+2j+1}+q^{2(2n+j+1)}) \Big)\\
                &\hspace{2em}-\Big( q^{2(2n+j+1)}(1 + q^{2n+2j+1})-q^{2(i-1)(2n+j+1)}(q^{2n+2j+1}+q^{2(2n+j+1)}) \Big)\bigg)\\
        =& q^{-2(j+1)(i-1)}\bigg( (1+q^{2j+2} )\Big(1 + q^{2n+2j+1}-q^{2(i-1)(2n+j+1)}(q^{2n+2j+1}+q^{2(2n+j+1)})\Big) \nonumber \\
            &\hspace{1em} - \Big( 1 + q^{2n+2j+1}-q^{2(i-1)(2n+j+1)}(q^{2n+2j+1}+q^{2(2n+j+1)}) \Big)\\
                &\hspace{2em}-\Big( q^{2(2n+j+1)}(1 + q^{2n+2j+1})-q^{2(i-2)(2n+j+1)}(q^{2n+2j+1}+q^{2(2n+j+1)})\Big)\bigg)\\
        =& q^{-2(j+1)(i-1)}\bigg( q^{2j+2}\Big(1 + q^{2n+2j+1}-q^{2(i-1)(2n+j+1)}(q^{2n+2j+1}+q^{2(2n+j+1)})\Big) \nonumber \\
            &\hspace{1em}-\Big( q^{2(2n+j+1)}(1 + q^{2n+2j+1})-q^{2(i-2)(2n+j+1)}(q^{2n+2j+1}+q^{2(2n+j+1)})\Big)\bigg)\\
        =& q^{-2(j+1)(i-1)}\Big(q^{2j+2}(1+ q^{2n+2j+1})-q^{2(i-1)(2n+j+1)+2j+2}(q^{2n+2j+1}+q^{2(2n+j+1)}) \nonumber \\
            &\hspace{1em}- q^{2(2n+j+1)}(1+ q^{2n+2j+1})+q^{2(i-2)(2n+j+1)}(q^{2n+2j+1}+q^{2(2n+j+1)})\Big)\\
        =& q^{-2(j+1)(i-1)}\Big((q^{2j+2}- q^{2(2n+j+1)})(1+ q^{2n+2j+1})\\
            &\hspace{1em}+(q^{2(i-2)(2n+j+1)}-q^{2(i-1)(2n+j+1)+2j+2})(q^{2n+2j+1}+q^{2(2n+j+1)})\Big) \nonumber \\
        =& q^{-2(j+1)(i-1)}\Big(q^{2j+2}(1- q^{4n})(1+ q^{2n+2j+1})\\
            &\hspace{1em}+q^{2(i-2)(2n+j+1)+2n+2j+1}(1-q^{2(2n+j+1)+2j+2})(1+q^{2n+1})\Big) \nonumber \\
        =& q^{-2(j+1)(i-1)}\Big(q^{2j+2}(1- q^{4n})(1+ q^{2n+2j+1})\\
            &\hspace{1em}+q^{2(i-2)(2n+j+1)+2n+2j+1}(1-q^{4(n+j+1)})(1+q^{2n+1})\Big) \nonumber \\
        =& q^{-2(j+1)(i-2)}(1- q^{4n})(1+ q^{2(n+j)+1})\\
            &\hspace{1em}+q^{4(i-2)n+2n-1}(1-q^{4(n+j+1)})(1+q^{2n+1}) \nonumber\\
        =& q^{-2(i-2)(j+1)}(1- q^{4n})+q^{2n+2j+1-2(i-2)(j+1)}(1- q^{4n})\\
            &\hspace{1em}+q^{4(i-2)n+2n-1}(1-q^{4(n+j+1)})+q^{4(i-2)n+4n}(1-q^{4(n+j+1)}) \nonumber \\
        =& q^{-2(i-2)(j+1)}(1- q^{4n})+q^{2n-1-2(i-3)(j+1)}(1- q^{4n})\\
            &\hspace{1em}+q^{4(i-2)n+2n-1}(1-q^{4(n+j+1)})+q^{4(i-2)n+4n}(1-q^{4(n+j+1)}) \nonumber \\
        =& q^{-2(i-2)(j+1)}(1- q^{4n})+q^{2n-1-2(i-3)(j+1)}(1- q^{4n})\\
            &\hspace{1em}+q^{4(i-2)n+2n-1}(1-q^{4(n+j+1)})+q^{4(i-1)n}(1-q^{4(n+j+1)}) \nonumber \\
        =& q^{-2(i-2)(j+1)}(1- q^{4n})+q^{4(i-1)n}(1-q^{4(n+j+1)})\\
            &\hspace{1em}+q^{2n-1}\left(q^{-2(i-3)(j+1)}(1- q^{4n})+q^{4(i-2)n}(1-q^{4(n+j+1)})\right). \nonumber 
\end{align*}
So, we have
\begin{align}\label{splitSums}
        &\hspace{-8.5em}\frac{G_{(k-1)j + k-i+1}- \tilde{G}_{(k-1)j+k-i+2}}{q^{2(j+1)(i-1)}} \nonumber \\
        = \frac{1}{(1+q^{2j+2})} \frac{1}{F(q)}&\sum_{n \ge 0}\frac{(-1)^n q^{(4k-2)\binom{n}{2}+n(2k(j+2)-2(i+j)-1)}(-q^{2j+2};q^2)_n\qpej{(n+1)}{j}}{(-q^2;q^2)_n\qpoj{n+1}{j+1}} \nonumber \\
            &\cdot \Big(q^{-2(i-2)(j+1)}(1- q^{4n})+q^{4(i-1)n}(1-q^{4(n+j+1)}) \nonumber \\
                &\hspace{2em}+q^{2n-1}\big(q^{-2(i-3)(j+1)}(1- q^{4n})+q^{4(i-2)n}(1-q^{4(n+j+1)})\big)\Big) \nonumber \\
        = \frac{1}{(1+q^{2j+2})} \frac{1}{F(q)}&\sum_{n \ge 0}\frac{(-1)^n q^{(4k-2)\binom{n}{2}+n(2k(j+2)-2(i+j)-1)}(-q^{2j+2};q^2)_n\qpej{(n+1)}{j}}{(-q^2;q^2)_n\qpoj{n+1}{j+1}} \nonumber \\
            &\cdot (q^{-2(i-2)(j+1)}(1- q^{4n})+q^{4(i-1)n}(1-q^{4(n+j+1)}))\nonumber\\
        &\hspace{-7.5em} +\frac{1}{(1+q^{2j+2})} \frac{q^{-1}}{F(q)}\sum_{n \ge 0}\frac{(-1)^n q^{(4k-2)\binom{n}{2}+n(2k(j+2)-2(i+j-1)-1)}(-q^{2j+2};q^2)_n\qpej{(n+1}{j}}{(-q^2;q^2)_n\qpoj{n+1}{j+1}} \nonumber \\
            &\hspace{1em}\cdot (q^{-2(i-3)(j+1)}(1- q^{4n})+q^{4(i-2)n}(1-q^{4(n+j+1)})).
\end{align}
Since the second sum of (\ref{splitSums}) is the same as the first sum, except $i$ is replaced with $i-1$, we only need to consider the first sum, as a similar computation will hold for the second sum. Taking the first sum on the right-hand side of (\ref{splitSums}), we have
\begin{align*}
        &\frac{1}{(1+q^{2j+2})}\sum_{n \ge 0}\frac{(-1)^n q^{(4k-2)\binom{n}{2}+n(2k(j+2)-2(i+j)-1)}(-q^{2j+2};q^2)_n\qpej{(n+1)}{j}}{(-q^2;q^2)_n\qpoj{n+1}{j+1}} \nonumber \\
            &\hspace{5em}\cdot (q^{-2(i-2)(j+1)}(1- q^{4n})+q^{4(i-1)n}(1-q^{4(n+j+1)}))\\
        =&\sum_{n \ge 0}\frac{(-1)^n q^{(4k-2)\binom{n}{2}+n(2k(j+2)-2(i+j)-1)}(-q^{2j+2};q^2)_n\qpej{(n+1)}{j}(1-q^{4n})}{(1+q^{2j+2})(-q^2;q^2)_n\qpoj{n+1}{j+1}} \nonumber \\
            &\cdot q^{-2(i-2)(j+1)}\\
        \hphantom{=}&+\sum_{n \ge 0}\frac{(-1)^n q^{(4k-2)\binom{n}{2}+n(2k(j+2)-2(i+j)-1)}(-q^{2j+2};q^2)_n\qpej{(n+1)}{j}(1-q^{4(n+j+1)})}{(1+q^{2j+2})(-q^2;q^2)_n\qpoj{n+1}{j+1}} \nonumber \\
            &\hspace{1em}\cdot q^{4(i-1)n}\\
        =&\sum_{n \ge 1}\frac{(-1)^n q^{(4k-2)\binom{n}{2}+n(2k(j+2)+2(i-j)-5)}(-q^{2j+2};q^2)_n\qpej{(n+1)}{j}(1-q^{4n})}{(1+q^{2j+2})(-q^2;q^2)_n\qpoj{n+1}{j+1}} \nonumber \\
            &\cdot q^{-2(i-1)(2n+j+1)+2j+2}\\
        &+\sum_{n \ge 0}\frac{(-1)^n q^{(4k-2)\binom{n}{2}+n(2k(j+2)+2(i-j)-5)}(-q^{2(j+1)+2};q^2)_n\qpej{(n+1)}{j}(1-q^{2(n+j+1)})}{(1+q^{2j+2})(-q^2;q^2)_n\qpoj{n+1}{j+2}} \nonumber \\
            &\hspace{1em}\cdot (1+q^{2j+2})(1+q^{2(n+j+1)+1})\\
        =&-\sum_{n \ge 0}\frac{(-1)^n q^{(4k-2)\binom{n}{2}+n(2k(j+2)+2(i-j)-5)}(-q^{2(j+1)+2};q^2)_{n}\qpej{(n+1)}{j+1}}{(1+q^{2j+2})(-q^2;q^2)_{n}\qpoj{n+1}{j+2}} \nonumber \\
            &\hspace{1em}\cdot q^{-2(i-1)(2(n+1)+j+1)+2j+2+(4k-2)n+2k(j+2)+2(i-j)-5}(1+q^{2j+2})(1+q^{2n+1})\\
        &+\sum_{n \ge 0}\frac{(-1)^n q^{(4k-2)\binom{n}{2}+n(2k(j+2)+2(i-j)-5)}(-q^{2(j+1)+2};q^2)_n\qpej{(n+1)}{j+1}}{(1+q^{2j+2})(-q^2;q^2)_n\qpoj{n+1}{j+2}} \nonumber \\
            &\hspace{1em}\cdot (1+q^{2j+2})(1+q^{2(n+j+1)+1})\\
        =&\sum_{n \ge 0}\frac{(-1)^n q^{(4k-2)\binom{n}{2}+n(2k(j+2)+2(i-j)-5)}(-q^{2(j+1)+2};q^2)_n\qpej{(n+1)}{j+1}}{(-q^2;q^2)_n\qpoj{n+1}{j+2}} \nonumber \\
            &\cdot\left((1+q^{2(n+j+1)+1})-q^{2(k-i)(2n+j+2)+2n+2j+3}(1+q^{2n+1})\right)\\
        =&\sum_{n \ge 0}\frac{(-1)^n q^{(4k-2)\binom{n}{2}+n(2k(j+2)+2(i-j)-5)}(-q^{2(j+1)+2};q^2)_n\qpej{(n+1)}{j+1}}{(-q^2;q^2)_n\qpoj{n+1}{j+2}} \nonumber \\
            &\cdot\left(1-q^{2(k-i+1)(2n+j+2)}+q^{2(n+j+1)+1}(1-q^{2(k-i)(2n+j+2)})\right),
\end{align*}
which is exactly $\overline{G}_{j+1,i}$. Substituting $i$ for $i-1$ yields $G_{(k-1)(j+1)+i-1}$. Therefore,
\begin{align*}
    \overline{G}_{(k-1)(j+1)+i}+ q^{-1}G_{(k-1)(j+1) + i-1}&= \frac{G_{(k-1)j + k-i+1}- \tilde{G}_{(k-1)j+k-i+2}}{q^{2(j+1)(i-1)}}.
\end{align*}
So, $\overline{G}_{j,i}=G_{(k-1)j+i}$ for all $j\geq 0$ and $1\leq i\leq k$.

Lastly, we use our formula for $G_{(k-1)(j+1)+i}$ to prove (\ref{gGs}). 
For $2 \le i \le k-1$, we have  
\begin{align*}
        & \hspace{-8em} \tilde{G}_{(k-1)(j+1)+i}=\frac{G_{(k-1)(j+1)+i-1} + q^{2(j+2)}G_{(k-1)(j+1) + i+1}}{1+q^{2(j+2)}}\\
        =\frac{1}{F(q)}\frac{1}{1+q^{2(j+2)}}&\sum_{n \ge 0}\frac{(-1)^n q^{(4k-2)\binom{n}{2}+n(2k(j+2)+2(i-j-2)-3)}(-q^{2(j+1)+2};q^2)_n\qpej{(n+1)}{j+1}}{(-q^2;q^2)_n\qpoj{n+1}{j+2}} \nonumber \\
            &\cdot\Big(1-q^{2(2n+j+2)(k-i+2)} + q^{2n+2j+3}(1-q^{2(2n+j+2)(k-i+1)})\Big)\nonumber\\
        &\hspace{-7.5em}+\frac{1}{F(q)}\frac{q^{2(j+2)}}{1+q^{2(j+2)}}\sum_{n \ge 0}\frac{(-1)^n q^{(4k-2)\binom{n}{2}+n(2k(j+2)+2(i-j)-3)}(-q^{2(j+1)+2};q^2)_n\qpej{(n+1)}{j+1}}{(-q^2;q^2)_n\qpoj{n+1}{j+2}}\nonumber\\
            &\hspace{1em}\cdot\Big(1-q^{2(2n+j+2)(k-i)} + q^{2n+2j+3}(1-q^{2(2n+j+2)(k-i-1)})\Big)\nonumber\\
        =\frac{1}{F(q)}\frac{1}{1+q^{2(j+2)}}&\sum_{n \ge 0}\frac{(-1)^n q^{(4k-2)\binom{n}{2}+n(2k(j+2)+2(i-j-2)-3)}(-q^{2(j+1)+2};q^2)_n\qpej{(n+1)}{j+1}}{(-q^2;q^2)_n\qpoj{n+1}{j+2}} \nonumber \\
            &\cdot \bigg(\Big(1-q^{2(2n+j+2)(k-i+2)} + q^{2n+2j+3}(1-q^{2(2n+j+2)(k-i+1)})\Big)\nonumber\\
                &\hspace{2em}+ q^{2(2n+j+2)}\Big(1-q^{2(2n+j+2)(k-i)} + q^{2n+2j+3}(1-q^{2(2n+j+2)(k-i-1)})\Big) \bigg)\\
        =\frac{1}{F(q)}\frac{1}{1+q^{2(j+2)}}&\sum_{n \ge 0}\frac{(-1)^n q^{(4k-2)\binom{n}{2}+n(2k(j+2)+2(i-j-2)-3)}(-q^{2(j+1)+2};q^2)_n\qpej{(n+1)}{j+1}}{(-q^2;q^2)_n\qpoj{n+1}{j+2}} \nonumber \\
            &\cdot \bigg(\Big(1 + q^{2n+2j+3}-q^{2(2n+j+2)(k-i+1)}(q^{2n+2j+3}+q^{2(2n+j+2)})\Big)\nonumber\\
                &\hspace{2em}+ q^{2(2n+j+2)}\Big(1+ q^{2n+2j+3}-q^{2(2n+j+2)(k-i-1)}(q^{2n+2j+3}+q^{2(2n+j+2)} )\Big) \bigg)\\
        =\frac{1}{F(q)}\frac{1}{1+q^{2(j+2)}}&\sum_{n \ge 0}\frac{(-1)^n q^{(4k-2)\binom{n}{2}+n(2k(j+2)+2(i-j-2)-3)}(-q^{2(j+1)+2};q^2)_n\qpej{(n+1)}{j+1}}{(-q^2;q^2)_n\qpoj{n+1}{j+2}} \nonumber \\
            &\cdot \Big((1+q^{2(2n+j+2)})(1 + q^{2n+2j+3})\\
                &\hspace{2em}-(q^{2(2n+j+2)(k-i+1)}+q^{2(2n+j+2)(k-i)})(q^{2n+2j+3}+q^{2(2n+j+2)})\Big)\nonumber\\
        =\frac{1}{F(q)}\frac{1}{1+q^{2(j+2)}}&\sum_{n \ge 0}\frac{(-1)^n q^{(4k-2)\binom{n}{2}+n(2k(j+2)+2(i-j-2)-3)}(-q^{2(j+1)+2};q^2)_n \qpej{(n+1)}{j+1}}{(-q^2;q^2)_n\qpoj{n+1}{j+2}} \nonumber \\
            &\cdot \Big((1+q^{2(2n+j+2)})(1 + q^{2n+2j+3})\\
                &\hspace{2em}-q^{2(2n+j+2)(k-i)}(1+q^{2(2n+j+2)})(q^{2n+2j+3}+q^{2(2n+j+2)})\Big)\nonumber\\
        =\frac{1}{F(q)}\frac{1}{1+q^{2(j+2)}}&\sum_{n \ge 0}\frac{(-1)^n q^{(4k-2)\binom{n}{2}+n(2k(j+2)+2(i-j-2)-3)}(-q^{2(j+1)+2};q^2)_n\qpej{(n+1)}{j+1}}{(-q^2;q^2)_n\qpoj{n+1}{j+2}} \nonumber \\
            &\cdot (1+q^{2(2n+j+2)})\Big(1 + q^{2n+2j+3}-q^{2(2n+j+2)(k-i)}(q^{2n+2j+3}+q^{2(2n+j+2)})\Big)\nonumber\\
        =\frac{1}{F(q)}\frac{1}{1+q^{2(j+2)}}&\sum_{n \ge 0}\frac{(-1)^n q^{(4k-2)\binom{n}{2}+n(2k(j+2)+2(i-j-2)-3)}(-q^{2(j+1)+2};q^2)_n\qpej{(n+1)}{j+1}}{(-q^2;q^2)_n\qpoj{n+1}{j+2}}\nonumber\\
            &\cdot(1+q^{2(2n+j+2)})\Big(1-q^{2(2n+j+2)(k-i+1)} + q^{2n+2j+3}(1-q^{2(2n+j+2)(k-i)})\Big)\nonumber\\
        &\hspace{-8.2em}=\overline{\tilde{G}}_{j+1,i}
\end{align*}
and that
\begin{align*}
        &\hspace{-8em} \tilde{G}_{(k-1)(j+1)+k}=\frac{G_{(k-1)(j+1)+k-1} - q^{2(j+1)+1}G_{(k-1)(j+1)+k}}{1+q^{2(j+2)}}\\
        =\frac{1}{F(q)}\frac{1}{1+q^{2(j+2)}}&\sum_{n \ge 0}\frac{(-1)^n q^{(4k-2)\binom{n}{2}+n(2k(j+2)+2(k-j-2)-3)}(-q^{2(j+1)+2};q^2)_n\qpej{(n+1)}{j+1}}{(-q^2;q^2)_n\qpoj{n+1}{j+2}}\\
            &\cdot(1-q^{4(2n+j+2)} + q^{2(n+j+1)+1}(1-q^{2(2n+j+2)}) \\
        &\hspace{-7.5em}-\frac{1}{F(q)}\frac{q^{2(j+1)+1}}{1+q^{2(j+2)}}\sum_{n \ge 0}\frac{(-1)^n q^{(4k-2)\binom{n}{2}+n(2k(j+2)+2(k-j-1)-3)}(-q^{2(j+1)+2};q^2)_n\qpej{(n+1)}{j+1}}{(-q^2;q^2)_n\qpoj{n+1}{j+2}}\\
            &\hspace{1em}\cdot(1-q^{2(2n+j+2)}) \\
        =\frac{1}{F(q)}\frac{1}{1+q^{2(j+2)}}&\sum_{n \ge 0}\frac{(-1)^n q^{(4k-2)\binom{n}{2}+n(2k(j+2)+2(k-j-2)-3)}(-q^{2(j+1)+2};q^2)_n\qpej{(n+1)}{j+1}}{(-q^2;q^2)_n\qpoj{n+1}{j+2}}\\
            &\cdot \left( (1-q^{4(2n+j+2)} + q^{2(n+j+1)+1}(1-q^{2(2n+j+2)}) - q^{2(n+j+1)+1}(1-q^{2(2n+j+2)})\right)\\
        =\frac{1}{F(q)}\frac{1}{1+q^{2(j+2)}}&\sum_{n \ge 0}\frac{(-1)^n q^{(4k-2)\binom{n}{2}+n(2k(j+2)+2(k-j-2)-3)}(-q^{2(j+1)+2};q^2)_n\qpej{(n+1)}{j+1}}{(-q^2;q^2)_n\qpoj{n+1}{j+2}}\\
            &\cdot(1+q^{2(2n+j+2)})(1-q^{2(2n+j+2)})\\
        &\hspace{-8.2em}=\overline{\tilde{G}}_{j+1,k}, 
\end{align*}
thus proving (\ref{gGs}) for $j \ge 0$ and $2 \le i \le k$.
\end{proof}

\section{The Empirical Hypothesis}
We now formulate and prove the Empirical Hypothesis, which is a consequence of Theorem \ref{ClosedFormG}.
\begin{theo}\label{EmpiricalHypothesis}(Empirical Hypothesis)
    For all $j \ge 0$ and $1\leq i\leq k$, we have
    \begin{equation}
        G_{(k-1)j+i} = 1 + q^{2j+1}\gamma(q)
    \end{equation}
    for some $\gamma(q) \in \mathbb{C}[[q]]$.
\end{theo}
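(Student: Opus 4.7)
The plan is to deduce the Empirical Hypothesis directly from the closed-form expression (\ref{Gs}) furnished by Theorem \ref{ClosedFormG}. Specifically, I would split the $n$-sum into the $n=0$ summand and the tail $n\ge 1$, then verify separately that the first lies in $1 + q^{2j+1}\mathbb{C}[[q]]$ and the second lies in $q^{2j+1}\mathbb{C}[[q]]$; adding the two yields the desired conclusion.

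For the $n=0$ summand, the factors $(-q^{2j+2};q^2)_n$, $(-q^2;q^2)_n$, and the leading $q$-power all trivialize, leaving
\begin{equation*}
\frac{1}{F(q)}\cdot\frac{(q^2;q^2)_j}{(-q;q^2)_{j+1}}\cdot\bigl(1 - q^{2(j+1)(k-i+1)} + q^{2j+1}\bigl(1 - q^{2(j+1)(k-i)}\bigr)\bigr).
\end{equation*}
Using $F(q) = (q^2;q^2)_\infty/(-q;q^2)_\infty$, I would collapse the Pochhammer prefactor to
\begin{equation*}
\frac{(-q^{2j+3};q^2)_\infty}{(q^{2j+2};q^2)_\infty},
\end{equation*}
which clearly has constant term $1$ with all remaining terms of $q$-degree at least $2j+2$, and thus lies in $1 + q^{2j+2}\mathbb{C}[[q]]$. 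The bracket itself lies in $1 + q^{2j+1}\mathbb{Z}[q]$: for $1 \le i \le k$ the term $-q^{2(j+1)(k-i+1)}$ has $q$-degree $\ge 2j+2$, while $+q^{2j+1}$ and $-q^{2j+1+2(j+1)(k-i)}$ have $q$-degree $\ge 2j+1$. Multiplying the two factors therefore produces a series in $1 + q^{2j+1}\mathbb{C}[[q]]$.

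For the tail $n \ge 1$, I would estimate the leading $q$-exponent
\begin{equation*}
A(n) := (4k-2)\binom{n}{2} + n\bigl((2k-2)j + 2k + 2i - 3\bigr).
\end{equation*}
The coefficient of $j$ is $n(2k-2) \ge 2$ and the $j$-independent part is $n(2k + 2i - 3) + (4k-2)\binom{n}{2} \ge 3$, using $k \ge 2$, $i \ge 1$, $n \ge 1$; hence $A(n) \ge 2j + 3$. The accompanying quotient of Pochhammers, together with $1/F(q)$ and the trailing bracket, is a formal power series in $\mathbb{C}[[q]]$, since each finite Pochhammer appearing in a denominator has constant term $1$. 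Therefore every $n \ge 1$ summand already lives in $q^{2j+3}\mathbb{C}[[q]] \subseteq q^{2j+1}\mathbb{C}[[q]]$, and the conclusion $G_{(k-1)j+i} \in 1 + q^{2j+1}\mathbb{C}[[q]]$ follows. The only nontrivial bookkeeping occurs in the $n=0$ step, where one must confirm that the $+q^{2j+1}$ inside the bracket is genuinely the lowest-degree non-constant contribution; the Pochhammer simplification above makes this transparent, since the prefactor $(-q^{2j+3};q^2)_\infty/(q^{2j+2};q^2)_\infty$ cannot spoil it by producing anything below $q^{2j+2}$.
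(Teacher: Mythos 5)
Your proposal is correct and follows essentially the same route as the paper: both split the closed form (\ref{Gs}) into the $n=0$ term and the tail $n\ge 1$, and bound the tail's $q$-exponents below by $2j+3$. The only difference is that the paper handles the $n=0$ term by citing the identical computation in Theorem 4.1 of \cite{CKLMQRS}, whereas you verify it directly by collapsing the Pochhammer prefactor to $(-q^{2j+3};q^2)_\infty/(q^{2j+2};q^2)_\infty$, which is a correct, self-contained substitute for that citation.
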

\begin{proof}
    This proof is similar to the proof of the Empirical Hypothesis (Theorem 4.1) in \cite{CKLMQRS}.
    We first note that when $n \ge 1$, we have
    \begin{align}
        (4k-2)\binom{n}{2} + (2k(j+1)+2(i-j)-3)n &\ge 2k(j+1)+2(i-j)-3\\
        &\ge 4(j+1)+2(i-j)-3\\
        &= 2j+2i+1\\
        &\ge 2j+3.
    \end{align}
    So, when $n\ge 1$, the powers of $q$ in (\ref{Gs}) are all at least $2j+3$. Thus, it suffices to examine the $n=0$ term in (\ref{Gs}). The $n=0$ term in (\ref{Gs}) is
    \begin{equation}
        \frac{(1-q^2)(1-q^4)\cdots (1-q^{2j})(1-q^{2(j+1)(k-i+1)}+q^{2j+1}(1-q^{2(j+1)(k-i)}))}{(1+q)(1+q^3)\cdots(1+q^{2j+1})\prod_{m \not\equiv 2 \bmod 4}(1-q^m)},
    \end{equation}
    which is identical to the $n=0$ term in Theorem 4.1 of \cite{CKLMQRS}, where it was proved that it is of the form 
    \begin{equation}
        1 + q^{2j+1}g(q)
    \end{equation}
    for some $g(q) \in \mathbb{C}[[q]]$. Thus, we have that 
    \begin{equation}
        G_{(k-1)j+i} = 1 + q^{2j+1}\gamma(q)
    \end{equation}
    for some $\gamma(q) \in \mathbb{C}[[q]]$.
\end{proof}

\begin{rema}
    We note that our use of the word ``empirical" in the Empirical Hypothesis is purely technical, and, unlike in \cite{AB}, our Empirical Hypothesis was not found empirically. In retrospect, by examining the combinatorial conditions in the sum sides of the Bressoud-G{\"o}llnitz-Gordon identities, one can obtain, by experimentation, the appropriate recursions defining shelf $j+1$ (which imply the Empirical Hypothesis) by taking appropriate linear combinations of elements on shelf $j$ and shelf $j+1$. 
\end{rema}

\begin{rema}
    Just as in \cite{CKLMQRS}, we have that 
    $$G_{(k-1)j + k} = G_{(k-1)(j+1)+1},$$
   and so, $G_{(k-1)j + k} = 1 + q^{2j+3}\gamma(q)$ for some $\gamma(q) \in \mathbb{C}[[q]].$
\end{rema}

We now also provide an Empirical Hypothesis for the ghost series.
\begin{theo}
    For all $j \ge 0$ and $2\leq i \leq k$, we have
    \begin{equation}
        \tilde{G}_{(k-1)j+i} = 1 + q^{2j+1}\gamma(q)
    \end{equation}
    for some $\gamma(q) \in \mathbb{C}[[q]]$.
\end{theo}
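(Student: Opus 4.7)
The plan is to derive the ghost-series Empirical Hypothesis from the already-established Empirical Hypothesis for the official series (Theorem \ref{EmpiricalHypothesis}) by exploiting the recursions (\ref{GhostRec1}) and (\ref{GhostRec2}), together with the tautological identity $\tilde{G}_{(k-1)j+k} = G_{(k-1)(j+1)+2}$ that comes from (\ref{Gdef2}). Since Theorem \ref{ClosedFormG} already guarantees that $\tilde{G}_{(k-1)j+i} \in \mathbb{C}[[q]]$, the task reduces to tracking the order of vanishing of $\tilde{G}_{(k-1)j+i} - 1$.

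For the generic range $2 \le i \le k-1$, I would start from
$$\tilde{G}_{(k-1)j+i} = \frac{G_{(k-1)j+i-1} + q^{2(j+1)} G_{(k-1)j+i+1}}{1 + q^{2(j+1)}}.$$
By Theorem \ref{EmpiricalHypothesis}, write $G_{(k-1)j+i-1} = 1 + q^{2j+1}\alpha(q)$ and $G_{(k-1)j+i+1} = 1 + q^{2j+1}\beta(q)$ with $\alpha, \beta \in \mathbb{C}[[q]]$. The numerator then regroups as $(1 + q^{2j+2}) + q^{2j+1}\bigl(\alpha + q^{2j+2}\beta\bigr)$, so that
$$\tilde{G}_{(k-1)j+i} = 1 + q^{2j+1}\,\frac{\alpha + q^{2j+2}\beta}{1 + q^{2j+2}}.$$
The fraction on the right lies in $\mathbb{C}[[q]]$ because $1 + q^{2j+2}$ is a unit in the formal power series ring, which establishes the claim in this range.

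For the remaining endpoint case $i = k$, rather than using the subtractive recursion (\ref{GhostRec2}) directly, the cleanest route is to invoke the definitional identity $\tilde{G}_{(k-1)j+k} = G_{(k-1)(j+1)+2}$ from (\ref{Gdef2}) and then apply Theorem \ref{EmpiricalHypothesis} on shelf $j+1$ at index $i = 2$. This produces
$$\tilde{G}_{(k-1)j+k} = G_{(k-1)(j+1)+2} = 1 + q^{2(j+1)+1}\gamma(q) = 1 + q^{2j+1}\bigl(q^{2}\gamma(q)\bigr),$$
which is of the required form (in fact with an extra factor of $q^{2}$ to spare).

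I do not expect a serious obstacle: the substantive analytic work has already been carried out in establishing Theorem \ref{EmpiricalHypothesis}, and all manipulations above are purely formal. The only mild subtlety is the sign change between (\ref{GhostRec1}) and (\ref{GhostRec2}) at the endpoint $i = k$, which would otherwise produce a less clean numerator of the shape $1 + q^{2j+1}(\alpha - 1 - q^{2j+1}\beta)$ requiring an extra rearrangement; this is sidestepped entirely by the shelf-$(j+1)$ identification above.
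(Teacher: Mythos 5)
Your proof is correct, but it takes a genuinely different route from the paper. The paper proves the ghost Empirical Hypothesis the same way it proves Theorem \ref{EmpiricalHypothesis}: directly from the closed form (\ref{gGs}), by noting that for $n \ge 1$ the exponents $(4k-2)\binom{n}{2} + (2k(j+1)+2(i-j-1)-3)n \ge 2j+2i-1 \ge 2j+3$ (using $i \ge 2$), and that the $n=0$ term of (\ref{gGs}) equals the $n=0$ term of (\ref{Gs}) after the factor $(1+q^{2(j+1)})$ cancels against $1+q^{2j+2}$, so the earlier analysis of that term (borrowed from \cite{CKLMQRS}) applies verbatim. You instead deduce the ghost statement as a purely formal consequence of the already-established official Empirical Hypothesis, using the unit $1+q^{2(j+1)}$ in the recursion (\ref{GhostRec1}) for $2 \le i \le k-1$, and the identification $\tilde{G}_{(k-1)j+k} = G_{(k-1)(j+1)+2}$ from (\ref{Gdef2}) together with Theorem \ref{EmpiricalHypothesis} on shelf $j+1$ for the endpoint — which even yields the sharper form $1+q^{2j+3}\gamma(q)$ there, analogous to the paper's remark about $G_{(k-1)j+k}$. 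There is no circularity, since Theorem \ref{EmpiricalHypothesis} is established for all shelves at once via Theorem \ref{ClosedFormG}. What each approach buys: yours is shorter and avoids re-examining the $q$-series, making transparent that the ghost Empirical Hypothesis carries no new analytic content beyond the official one; the paper's argument is uniform with its proof of Theorem \ref{EmpiricalHypothesis} and works entirely at the level of the closed forms, without invoking the recursions or edge-matching across shelves. Your observation that the subtractive recursion (\ref{GhostRec2}) would also work after a small rearrangement is accurate as well.
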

\begin{proof}
    We proceed as in the proof of Theorem \ref{EmpiricalHypothesis}. When $n \ge 1$, we have
    \begin{align}
        (4k-2)\binom{n}{2} + (2k(j+1)+2(i-j-1)-3)n &\ge 2k(j+1)+2(i-j-1)-3\\
        &\ge 4(j+1)+2(i-j-1)-3\\
        &= 2j+2i-1\\
        &\ge 2j+3,
    \end{align}
    where the last step follows from the fact that $2 \le i \le k$. So, when $n\ge 1$, the powers of $q$ in (\ref{Gs}) are all at least $2j+3$, and thus, we only need to consider the $n=0$ term. When $n=0$, we have 
    \begin{equation}\label{Ghostn0}
        \frac{(1-q^2)(1-q^4)\cdots (1-q^{2j})(1-q^{2(j+1)(k-i+1)}+q^{2j+1}(1-q^{2(j+1)(k-i)}))}{(1+q^{2j+2})(1+q)(1+q^3)\cdots(1+q^{2j+1})\prod_{m \not\equiv 2 \bmod 4}(1-q^m)}(1+q^{2(j+1)}),
    \end{equation}
    which is the same as the $n=0$ term of (\ref{Gs}). The result now follows.
\end{proof}

\section{Matrix interpretation and consequences}
Using (\ref{Gdef1}) - (\ref{Gdef4}), we have the following recursions satisfied by $G_{\ell}$ for $\ell \ge 1$:
\begin{equation}\label{Grecursion1}
    G_{(k-1)(j+1)+1} = G_{(k-1)j+k},
\end{equation}
\begin{equation}\label{Grecursion2}
    G_{(k-1)(j+1)+2} = \frac{G_{(k-1)j+k-1} - q^{2j+1}G_{(k-1)j+k}}{1+q^{2j+2}}, 
\end{equation}
and
\begin{equation}\label{Grecursion3}
    G_{(k-1)(j+1)+i} = \frac{G_{(k-1)j + k-i+1} - G_{(k-1)j+k-i+3}}{(1+q^{2j+2})q^{2(j+1)(i-2)}}-q^{-1}G_{(k-1)(j+1)+i-1}
\end{equation}
for $3\leq i\leq k$.
We note that, using (\ref{Grecursion1}), (\ref{Grecursion2}) can be rewritten as
\begin{equation}\label{Grecursion2rewrite}
    q^{2j+1}G_{(k-1)(j+1)+1} + (1+q^{2j+2})G_{(k-1)(j+1)+2} = G_{(k-1)j+k-1},
\end{equation}
and that (\ref{Grecursion3}) can be rewritten as
\begin{align}\label{Grecursion3rewrite}
    q^{-1}(1+q^{2j+2})&G_{(k-1)(j+1)+i-1} + (1+q^{2j+2})G_{(k-1)(j+1)+i}\nonumber \\
    &=q^{-2(j+1)(i-2)}G_{(k-1)j+k-i+1} - q^{-2(j+1)(i-2)}G_{(k-1)j+k-i+3}.
\end{align}

Define the vector
 \begin{equation*}
{\bf G}_{(0)} = 
\left[ \begin{array}{c}
G_{1} \\
\vdots \\
G_{k}
\end{array} \right],
\end{equation*}
and more generally, for each $j\geq 0$, define the vector
 \begin{equation*}
{\bf G}_{(j)} = 
\left[ \begin{array}{c}
G_{(k-1)j+1}\\
\vdots \\
G_{(k-1)j+k}
\end{array} \right].
\end{equation*}

For each $j \ge 1$, set
\begin{align}
{ \bf  B}_{(j)} =  
\left[\begin{matrix}
0 & 0 & 0 & \cdots & 0 & 0 & 0 & 1\\
0 & 0 & 0 & \cdots & 0 & 0 & 1 & 0\\
0 & 0 & 0 & \cdots & 0 & q^{-2j} & 0 & -q^{-2j}\\
0 & 0 & 0 & \cdots & q^{-4j} & 0 & -q^{-4j} & 0\\
\vdots & \vdots & \vdots & \swarrow & \swarrow & \swarrow & \vdots & \vdots\\
& & & & & & &\\
& & & & & & &\\
q^{-2j(k-2)} & 0 & -q^{-2j(k-2)} & \cdots & 0 & 0 & 0 & 0
\end{matrix}\right]\label{Bmatrix}
\end{align}
and set
\begin{equation*}
{\bf C}_{(j)} = 
\left[ \begin{array}{cccccc}
1           & 0                 & 0         & \cdots          & 0                & 0  \\
q^{2j-1}    & 1+q^{2j}          & 0         & \cdots          & 0                & 0  \\
0           & q^{-1}(1+q^{2j})  & 1+q^{2j}  & \cdots          & 0                & 0  \\
\vdots      & \vdots            & \vdots    & \searrow        & \vdots           & \vdots  \\
0           & 0                 & 0         & \cdots          & 1+q^{2j}         & 0  \\
0           & 0                 & 0         & \cdots          & q^{-1}(1+q^{2j}) & 1+q^{2j}
\end{array} \right] .
\end{equation*}
We now write (\ref{Grecursion1}), (\ref{Grecursion2rewrite}), and (\ref{Grecursion3rewrite}) as
\begin{equation}
    {\bf C}_{(j)} {\bf G}_{(j)} = {\bf B}_{(j)} {\bf G}_{(j-1)}
\end{equation}
for $j \ge 1$. 

Following \cite{CKLMQRS}, we define
\begin{equation}
    {\bf A}_{(j)} = {\bf B}_{(j)}^{-1}
\end{equation}
and note that when $k$ is even we have
\begin{align}
{ \bf  A}_{(j)} =  
\left[\begin{matrix}
    0       & 1         & 0         & q^{4j}    & \cdots    & q^{2j \left( k-4 \right) } & 0                         & q^{2j \left( k-2 \right) } \\
    1       & 0         & q^{2j}    & 0         & \cdots    & 0                          & q^{2j \left( k-3 \right)} & 0 \\
    0       & 1         & 0         & q^{4j}    & \cdots    & q^{2j \left( k-4 \right)}  & 0                         & 0 \\
    \vdots  & \vdots    & \vdots    & \vdots    &\swarrow   & \vdots                     & \vdots                    & \vdots  \\
    0       & 1         & 0         & q^{4j}    & \cdots    & 0                          & 0                         & 0 \\
    1       & 0         & q^{2j}    & 0         & \cdots    & 0                          & 0                         & 0 \\
    0       & 1         & 0         & 0         &\cdots     & 0                          & 0                         & 0 \\
    1       & 0         & 0         & 0         &\cdots     & 0                          & 0                         & 0 
\end{matrix}\right]\label{Amatrixeven}
\end{align}
and when $k$ is odd we have
\begin{align}
    { \bf  A}_{(j)} =  
    \left[\begin{matrix}
    1       & 0         & q^{2j} & 0      & \cdots    & q^{2j \left( k-4 \right)} & 0                           & q^{2j \left( k-2 \right) }\\
    0       & 1         & 0      & q^{4j} &\cdots     & 0                         & q^{2j \left( k -3 \right)}  & 0 \\
    1       & 0         & q^{2j} & 0      & \cdots    & q^{2j \left( k-4 \right)} & 0                           & 0 \\
    \vdots  & \vdots    & \vdots & \vdots &  \swarrow & \vdots                    & \vdots                      & \vdots\\
    0       & 1         & 0      & q^{4j} & \cdots    & 0                         & 0                           & 0 \\
    1       & 0         & q^{2j} & 0      & \cdots    & 0                         & 0                           & 0 \\
    0       & 1         & 0      & 0      & \cdots    & 0                         & 0                           & 0 \\
    1       & 0         & 0      &  0     &\cdots     & 0                         & 0                           & 0
\end{matrix}\right]\label{Amatrixodd}.
\end{align}
Next, we define
\begin{equation}
    {\bf A}'_{(j)} = {\bf A}_{(j)} {\bf C}_{(j)}.
\end{equation}
%
%
%
%
For $k$ odd we have

${\bf{A}}_{(j)}'=(1+q^{2j})\cdot$
\begin{equation}
 \left[\begin{matrix}
\left( 1 + q^{2j} \right)^{-1}              & q^{2j-1}  & q^{2j}        & q^{6j-1}      & \cdots     & q^{2j \left( k-4 \right) }       & q^{2j \left( k - 2 \right) - 1 }  & q^{2j \left( k-2 \right) } \\
q^{2j-1} \left( 1 + q^{2j} \right)^{-1}     & 1         & q^{4j - 1}    & q^{4j}        & \cdots     & q^{2j \left( k - 3 \right) - 1 } & q^{2j \left( k-3 \right)}       & 0\\
\left( 1 + q^{2j} \right)^{-1}              & q^{2j-1}  & q^{2j}        & q^{6j - 1}    & \cdots     & q^{2j \left( k-4 \right) }       & 0                                    & 0 \\
\vdots                                      & \vdots    & \vdots        & \vdots        & \swarrow   & \vdots                           & \vdots                               & \vdots \\
q^{2j - 1} \left( 1 + q^{2j} \right)^{-1}   & 1         & q^{4j - 1}    & q^{4j}        & \cdots     & 0                                & 0                                    & 0 \\
\left( 1 + q^{2j} \right)^{-1}              & q^{2j-1}  & q^{2j}        & 0             & \cdots     & 0                                & 0                                    & 0\\
q^{2j-1} \left( 1 + q^{2j} \right)^{-1}     & 1         & 0             & 0             & \cdots     & 0                                & 0                                    & 0 \\
\left( 1 + q^{2j} \right)^{-1}              & 0         & 0             & 0             & \cdots     & 0                                & 0                                    & 0 
\end{matrix}\right]
\end{equation}
and for $k$ even we have

${\bf{A}}_{(j)}'=(1+q^{2j})\cdot$
\begin{equation}
\left[\begin{matrix}
q^{2j-1} \left( 1 + q^{2j} \right)^{-1}     & 1             & q^{4j-1}      & q^{4j}        & \cdots    & q^{2j \left( k-4 \right) }        & q^{2j \left( k - 2 \right) - 1 }  & q^{2j \left( k-2 \right) } \\
\left( 1 + q^{2j} \right)^{-1}              & q^{2j - 1}    & q^{2j}        & q^{6j-1}      & \cdots    & q^{2j \left( k - 3 \right) - 1 }  & q^{2j \left( k-3 \right)}         & 0 \\
q^{2j - 1} \left( 1 + q^{2j} \right)^{-1}   & 1             & q^{4j - 1}    & q^{4j}        & \cdots    & q^{2j \left( k-4 \right) }        & 0                                  & 0 \\
\vdots                                      & \vdots        & \vdots        & \vdots        & \swarrow  & \vdots                            & \vdots & \vdots & \\
q^{2j - 1} \left( 1 + q^{2j} \right)^{-1}   & 1             & q^{4j - 1}    & q^{4j}        & \cdots    & 0                                 & 0                                  & 0 \\
\left( 1 + q^{2j} \right)^{-1}              & q^{2j-1}      & q^{2j}        & 0             & \cdots    & 0                                 & 0                                  & 0\\
q^{2j-1} \left( 1 + q^{2j} \right)^{-1}     & 1             & 0             & 0             &\cdots     & 0                                 & 0                                  & 0 \\
\left( 1 + q^{2j} \right)^{-1}              & 0             & 0             & 0             &\cdots     & 0                                 & 0                                  & 0 
\end{matrix}\right].
\end{equation}

In particular, we now have
\begin{equation}\label{VectorRecursion}
    {\bf G}_{(j-1)} = {\bf A}'_{(j)}{\bf G}_{(j)}
\end{equation}
for all $j \ge 1$.

Now, we fix an integer $J \ge 0$, which, as in \cite{CKLMQRS}, will denote a ``starting shelf." Now, if $j \ge J+1$, we repeatedly apply (\ref{VectorRecursion}) to obtain
\begin{equation}\label{JMatrixRecursion}
    {\bf G}_{(J)} = {\bf A}_{(J+1)}' {\bf A}_{(J+2)}' \cdots {\bf A}_{(j)}' {\bf G}_{(j)} =\ ^J{\bf h}^{(j)}{\bf G}_{(j)}, 
\end{equation}
where we define 
\begin{equation}\label{h-matrix-def}
    {}^J{\bf h}^{(j)} = {\bf A}_{(J+1)}' {\bf A}_{(J+2)}' \cdots {\bf A}_{(j)}'
\end{equation}
and take ${}^J{\bf h}^{(J)}$ to be the identity matrix. Writing out (\ref{JMatrixRecursion}) in component form, we have
\begin{equation}
    G_{(k-1)J+i} =  
    \rnh{i}{1}{j}{J} G_{(k-1) j +1} + \cdots + \rnh{i}{k}{j}{J} G_{(k-1) j + 
    k},
\end{equation}
where $\rnh{i}{\ell}{j}{J}$ is the $(i,\ell)$-entry of the matrix $^J{\bf h}^{(j)}$.

Now, using the definition of ${}^J{\bf h}^{(j)}$, we have for $j \ge J+1$ that
\begin{equation}
    {}^J{\bf h}^{(j)} = {}^J{\bf h}^{(j-1)}{\bf A}'_{(j)},
\end{equation}
which we now write component-wise. First, we consider the case when $k$ is even. When $\ell=1$ we have
\begin{align}\label{rec1}
    \rnh{i}{1}{j}{J} =& q^{2j-1}\left(\rnh{i}{1}{j-1}{J} + \rnh{i}{3}{j-1}{J} + \dots + \rnh{i}{k-1}{j-1}{J}\right)\nonumber \\
    &+ \left(\rnh{i}{2}{j-1}{J} + \rnh{i}{4}{j-1}{J} + \dots + \rnh{i}{k}{j-1}{J}\right),
\end{align}
when $\ell > 1$ is even we have
\begin{align}
    \rnh{i}{\ell}{j}{J} =& (1+q^{2j})q^{(\ell-2)(2j)}\left( \rnh{i}{1}{j-1}{J} + \rnh{i}{3}{j-1}{J} + \dots + \rnh{i}{k-\ell+1}{j-1}{J}\right) \nonumber \\
    &+(1+q^{2j})q^{(\ell-2)(2j)+2j-1}\left( \rnh{i}{2}{j-1}{J} + \rnh{i}{4}{j-1}{J} + \dots + \rnh{i}{k-\ell}{j-1}{J} \right),
\end{align}
and when $\ell > 1$ is odd we have
\begin{align}
    \rnh{i}{\ell}{j}{J} =& (1+q^{2j})q^{(\ell-2)(2j) + 2j-1}\left( \rnh{i}{1}{j-1}{J}+ \rnh{i}{3}{j-1}{J}) + \dots + 
    \rnh{i}{k-\ell}{j-1}{J}\right) \nonumber \\
    &+(1+q^{2j})q^{(\ell-2)(2j)}\left(\rnh{i}{2}{j-1}{J} + \rnh{i}{4}{j-1}{J} + \dots + \rnh{i}{k-\ell+1}{j-1}{J}\right).
\end{align}
Next, we consider the case when $k$ is odd. When $\ell=1$ we have
\begin{align}
    \rnh{i}{1}{j}{J} =& \left(\rnh{i}{1}{j-1}{J} + 
    \rnh{i}{3}{j-1}{J}+ \dots + 
    \rnh{i}{k}{j-1}{J}\right)\nonumber \\
    &+ q^{2j-1}\left(\rnh{i}{2}{j-1}{J} + 
    \rnh{i}{4}{j-1}{J} + \dots + \rnh{i}{k-1}{j-1}{J}\right),
\end{align}
when $\ell > 1$ is even we have
\begin{align}
    \rnh{i}{\ell}{j}{J}=& (1+q^{2j})q^{(\ell-2)(2j) + 2j-1}\left( \rnh{i}{1}{j-1}{J} + 
    \rnh{i}{3}{j-1}{J}+ \dots + \rnh{i}{k-\ell}{j-1}{J}\right)\nonumber  \\
    &+(1+q^{2j})q^{(\ell-2)(2j)}\left(\rnh{i}{2}{j-1}{J} + \rnh{i}{4}{j-1}{J} + \dots + \rnh{i}{k-\ell+1}{j-1}{J}\right),
\end{align}
and when $\ell > 1$ is odd we have
\begin{align}\label{rec2}
    \rnh{i}{\ell}{j}{J} =& (1+q^{2j})q^{(\ell-2)(2j) }\left( \rnh{i}{1}{j-1}{J} + \rnh{i}{3}{j-1}{J} + \dots + \rnh{i}{k-\ell+1}{j-1}{J}\right) \nonumber \\
    &+(1+q^{2j})q^{(\ell-2)(2j)+2j-1}\left(\rnh{i}{2}{j-1}{J} + \rnh{i}{4}{j-1}{J} + \dots + \rnh{i}{k-\ell}{j-1}{J} \right).
\end{align}
We summarize the recursions (\ref{rec1})-(\ref{rec2}) as: 
    \begin{align}\label{hrecursion}
       \rnh{i}{\ell}{j}{J}&=q^{2j \left( \ell - 1\right)} \left( q^{2j - 1} \sum \limits^{k - \ell }_{\substack{{m=1} \\ {m \equiv \ell + k\bmod{2}}}} \rnh{i}{m}{j-1}{J}  +  \sum \limits^{k - \left( \ell - 1 \right)}_{\substack{{m=1} \\ {m \not \equiv \ell + k \bmod{2}}}} \rnh{i}{m}{j-1}{J} \right) \nonumber\\& + (1-\delta_{\ell,1})q^{2j \left( \ell - 2\right) } \left( q^{2j - 1} \sum \limits^{k - \ell}_{\substack{{m=1} \\ {m  \equiv \ell + k \bmod{2}}}} \rnh{i}{m}{j-1}{J} + \sum \limits^{k - \left( \ell - 1 \right)}_{\substack{{m=1} \\ {m \not \equiv \ell + k \bmod{2}}}} \rnh{i}{m}{j-1}{J} \right),
    \end{align}
where $\delta_{i,j}$ is the Kronecker delta and $1 \le \ell \le k$.
Moreover, we note that $\rnh{i}{\ell}{J}{J}=\delta_{i,\ell}$ and that
\begin{equation}\label{h1up}
{}^J_ih^{(J+1)}_{\ell} = 
    \begin{cases}
        q^{(\ell - 1)(2J+2)} + q^{(\ell -2)(2J+2)}(1-\delta_{\ell,1}) & \text{if} \quad \ell \leq k - i + 1  \quad \text{and} \quad \ell + k - 1 \equiv i \bmod 2\\ 
        q^{2J+1}\left(  q^{(\ell - 1)(2J+2)} + q^{(\ell -2)(2J+2)}(1-\delta_{\ell,1}) \right) & \text{if} \quad \ell \leq k - i \quad \text{and} \quad \ell + k - 1 \not \equiv i \bmod 2\\
        0 & \text{if} \quad \ell > k - i + 1.
    \end{cases}
\end{equation}
We now have the following proposition, which follows immediately.
\begin{proposition}
    The polynomials $\rnh{i}{\ell}{j}{J}$are uniquely determined by the initial conditions
    \begin{equation}
        \rnh{i}{\ell}{J}{J}= \delta_{i,\ell}
    \end{equation}
    and the recursions
    \begin{align}\label{hMatrixRecursions}
       \rnh{i}{\ell}{j}{J}&=q^{2j \left( \ell - 1\right)} \left( q^{2j - 1} \sum \limits^{k - \ell }_{\substack{{m=1} \\ {m \equiv \ell + k\bmod{2}}}} \rnh{i}{m}{j-1}{J} +  \sum \limits^{k - \left( \ell - 1 \right)}_{\substack{{m=1} \\ {m \not \equiv \ell + k \bmod{2}}}} \rnh{i}{m}{j-1}{J} \right) \nonumber\\& + (1-\delta_{\ell,1})q^{2j \left( \ell - 2\right) } \left( q^{2j - 1} \sum \limits^{k - \ell}_{\substack{{m=1} \\ {m  \equiv \ell + k \bmod{2}}}} \rnh{i}{m}{j-1}{J} + \sum \limits^{k - \left( \ell - 1 \right)}_{\substack{{m=1} \\ {m \not \equiv \ell + k \bmod{2}}}} \rnh{i}{m}{j-1}{J} \right)
    \end{align}
    for $j \geq J+1$.
\end{proposition}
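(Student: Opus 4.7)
The plan is to observe that this proposition is essentially a restatement of the matrix identity ${}^J{\bf h}^{(j)} = {}^J{\bf h}^{(j-1)}{\bf A}'_{(j)}$ together with the convention ${}^J{\bf h}^{(J)} = I$, both of which were established immediately above in the discussion surrounding (\ref{h-matrix-def}). First I would note that the initial condition $\rnh{i}{\ell}{J}{J} = \delta_{i,\ell}$ follows directly from the fact that ${}^J{\bf h}^{(J)}$ was defined to be the $k\times k$ identity matrix.

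Next, since the matrix ${}^J{\bf h}^{(j)}$ was defined as the product ${\bf A}'_{(J+1)}{\bf A}'_{(J+2)}\cdots {\bf A}'_{(j)}$, for every $j \ge J+1$ it satisfies the one-step relation ${}^J{\bf h}^{(j)} = {}^J{\bf h}^{(j-1)}{\bf A}'_{(j)}$. Reading off the $(i,\ell)$-entry of both sides, $\rnh{i}{\ell}{j}{J}$ equals the dot product of the $i$th row of ${}^J{\bf h}^{(j-1)}$ with the $\ell$th column of ${\bf A}'_{(j)}$. Using the explicit forms of ${\bf A}'_{(j)}$ displayed for $k$ even and $k$ odd (with their characteristic checkerboard pattern of nonzero entries), this dot product simplifies exactly to the right-hand side of (\ref{hMatrixRecursions}) — which is precisely the calculation already carried out in passing from (\ref{rec1}) through (\ref{rec2}) and summarized in (\ref{hrecursion}). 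So the required recursion is immediate.

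For uniqueness, I would observe that the recursion (\ref{hMatrixRecursions}) expresses $\rnh{i}{\ell}{j}{J}$ entirely in terms of the $k$ values $\rnh{i}{m}{j-1}{J}$ with $1 \le m \le k$. A straightforward induction on $j$, with base case $j=J$ pinned down by $\rnh{i}{\ell}{J}{J}=\delta_{i,\ell}$, then shows that all the polynomials $\rnh{i}{\ell}{j}{J}$ are uniquely determined for $j \ge J$ and $1 \le \ell \le k$.

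The only mildly delicate point is the bookkeeping of the parity condition $m \equiv \ell + k \pmod 2$ in (\ref{hMatrixRecursions}) and the associated powers of $q$: one must verify separately in the $k$ even and $k$ odd cases that the nonzero entries in the $\ell$th column of ${\bf A}'_{(j)}$ alternate between the two forms $q^{2j(\ell-1)}$ (or $q^{2j(\ell-2)}$) and $q^{2j(\ell-1)+2j-1}$ (or $q^{2j(\ell-2)+2j-1}$) in a way consistent with the stated parity, and that the bottom-row boundary entries (where $\delta_{\ell,1}$ becomes relevant) produce the factor $(1-\delta_{\ell,1})$ in the second summand. Since this bookkeeping was already done in the derivation of the recursions (\ref{rec1})--(\ref{rec2}) above, the proposition follows with no additional work.
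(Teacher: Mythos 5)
Your proposal is correct and matches the paper's treatment: the paper derives the recursion (\ref{hrecursion}) exactly as you describe, by writing the matrix identity ${}^J{\bf h}^{(j)} = {}^J{\bf h}^{(j-1)}{\bf A}'_{(j)}$ component-wise in the $k$ even and $k$ odd cases, notes $\rnh{i}{\ell}{J}{J}=\delta_{i,\ell}$ from ${}^J{\bf h}^{(J)}$ being the identity, and then states the proposition as an immediate consequence. Your added remark that uniqueness follows by induction on $j$ from the one-step nature of the recursion is exactly the (unstated but intended) justification in the paper.
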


\section{Combinatorial interpretation of the $G_{\ell}$ and $\tilde{G_{\ell}}$}

In this section, we finish our motivated proof and give a combinatorial interpretation for the series $G_{(k-1)J+i}$ with $1 \le i \le k$. We also give combinatorial interpretations for the ghost series $\tilde{G}_{(k-1)J+i}$ with $2 \le \ell \le k$. In order to obtain combinatorial interpretations for our series $G_{(k-1)J+i}$ and $\tilde{G}_{(k-1)J+i}$, we first give a combinatorial interpretation of the polynomials  
$\rnh{i}{\ell}{j}{J}$.

\begin{proposition}\label{hcombinatorics}
    For $j \ge J+1$ and $1\leq i,\ell \leq k$, the polynomial $\rnh{i}{\ell}{j}{J}$ is the generating function for partitions $\lambda=(b_1,\dots,b_s)$, with $b_1\geq\hdots\geq b_s$, satisfying the conditions:
    \begin{enumerate}
        \item No odd parts are repeated,
        \item $f_{2J+1}(\lambda) + f_{2J+2}(\lambda)  \leq k-i$,
        \item  $f_{2t}(\lambda)  + f_{2t+1}(\lambda)  + f_{2t+2}(\lambda)  \leq k-1$ for all $t \ge 0$,
        \item if $f_{2t}(\lambda)  + f_{2t+1}(\lambda)  + f_{2t+2}(\lambda)  = k-1$, then
        \begin{equation*}tf_{2t}(\lambda)  + \left( t+1\right) \left(f_{2t+1}(\lambda)  + f_{2t+2}(\lambda) \right)  \equiv \left( k-1\right)J + k - i + V^o_{\lambda}(t) \bmod{2},
        \end{equation*}
        \item the smallest part $b_s >2J$,
        \item $V^o_{\lambda}(j)\equiv$
        $\begin{cases}
            0\bmod 2\quad\text{if}\quad\ell+(k-1)(j-J)\equiv i\bmod 2 \\
            1\bmod 2\quad\text{if}\quad\ell+(k-1)(j-J)\not\equiv i\bmod 2,
        \end{cases}$ \\
    \item the largest part $b_1 \leq 2j$,
    \item $f_{2j}(\lambda) \in \{ \ell - 1, \ell - 2 \} \cap \mathbb{N} $.
    \end{enumerate}
\end{proposition}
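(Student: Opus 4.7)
The plan is to prove the statement by induction on $j \ge J+1$, using the recursion \eqref{hMatrixRecursions} together with the explicit formulas \eqref{h1up} as the base case. This mirrors the combinatorial argument in Section 6 of \cite{CKLMQRS}, but with nontrivial modifications reflecting the additional parity condition (4) coming from the Bressoud--G\"ollnitz--Gordon setting.

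For the base case $j = J+1$, conditions (5) and (7) restrict every part of $\lambda$ to the set $\{2J+1, 2J+2\}$, so $\lambda$ is specified by the pair $(f_{2J+1}(\lambda), f_{2J+2}(\lambda))$. Condition (1) forces $f_{2J+1}(\lambda) \in \{0,1\}$, condition (8) forces $f_{2J+2}(\lambda) \in \{\ell-1, \ell-2\} \cap \mathbb{N}$, and condition (6) at $j = J+1$ pins $f_{2J+1}(\lambda)$ down to either $0$ or $1$ according to the parity of $\ell + (k-1)$ versus $i$. Direct enumeration over the two permissible values of $f_{2J+2}(\lambda)$ then reproduces each of the three branches of \eqref{h1up}, while conditions (2), (3), and (4) at $t = J$ can be checked either to be automatically satisfied or to enforce exactly the support conditions $\ell \le k-i+1$ and $\ell \le k-i$ appearing in \eqref{h1up}.

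For the inductive step, I would uniquely decompose a partition $\lambda$ counted on shelf $j$ as a triple $(f_{2j}(\lambda), f_{2j-1}(\lambda), \lambda')$, where $\lambda'$ is the sub-partition of $\lambda$ consisting of parts not exceeding $2j-2$. Condition (8) forces $f_{2j}(\lambda) \in \{\ell-1, \ell-2\}$, producing the prefactors $q^{2j(\ell-1)}$ and $q^{2j(\ell-2)}$; condition (1) permits $f_{2j-1}(\lambda) \in \{0,1\}$, producing the factor $1$ or $q^{2j-1}$; and the complement $\lambda'$ should be counted by $\rnh{i}{m}{j-1}{J}$ for the unique $m$ with $f_{2j-2}(\lambda') \in \{m-1, m-2\}$. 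Using the identity $V^o_{\lambda}(j) = V^o_{\lambda'}(j-1) + f_{2j-1}(\lambda)$, comparing condition (6) at shelves $j$ and $j-1$ produces exactly the split $m \equiv \ell+k \bmod 2$ (paired with $f_{2j-1} = 1$) versus $m \not\equiv \ell+k \bmod 2$ (paired with $f_{2j-1} = 0$) that appears in \eqref{hMatrixRecursions}, while condition (3) at $t = j-1$ provides the nominal upper bound on $m$.

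The main subtlety lies in the saturation cases where $f_{2j-2}(\lambda) + f_{2j-1}(\lambda) + f_{2j}(\lambda) = k-1$. In such cases, condition (4) applied at $t = j-1$ to $\lambda$ imposes a parity constraint on $V^o_{\lambda}(j-1) = V^o_{\lambda'}(j-1)$, which must be compared with the parity of $V^o_{\lambda'}(j-1)$ already forced by condition (6) applied to $\lambda'$. One has to verify that these two parity requirements are compatible precisely for those values of $m$ still admitted by \eqref{hMatrixRecursions}, and incompatible (hence contributing nothing) for the borderline values of $m$ which condition (3) alone would admit. This dichotomy is what trims the upper summation limit on $m$ from the larger value suggested by the naive count to the sharper value appearing in \eqref{hMatrixRecursions}, and is the step I expect to demand the most careful bookkeeping. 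A companion check at $t = j$ (active only when $f_{2j}(\lambda) = k-1$, i.e., when $\ell = k$) follows directly from condition (6) at shelf $j$ and is automatic.
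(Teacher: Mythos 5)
Your proposal is correct and follows essentially the same route as the paper's proof: verify the base case $j=J+1$ against \eqref{h1up} by direct enumeration of $(f_{2J+1},f_{2J+2})$, then for $j\ge J+2$ strip the parts $2j$ and $2j-1$ and match the remainder against shelf $j-1$, with the parity bookkeeping $V^o_{\lambda}(j)=V^o_{\lambda'}(j-1)+f_{2j-1}(\lambda)$ producing the split $m\equiv\ell+k$ versus $m\not\equiv\ell+k\bmod 2$, and with condition (4) at $t=j-1$ trimming the borderline value of $m$ exactly as in \eqref{hMatrixRecursions}. The saturation analysis you flag as the main subtlety is precisely the point the paper's proof dwells on, so no essential idea is missing.
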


\begin{proof}
 Let $\rnht{i}{\ell}{j}{J}$ denote the generating function for partitions $\lambda$ satisfying conditions 1-8 of the proposition. We verify that $\rnht{i}{\ell}{J+1}{J} = \rnh{i}{\ell}{J+1}{J}$ and that, for $j\geq J + 2$,  $\rnht{i}{\ell}{j}{J}$ satisfies the recursion (\ref{hMatrixRecursions}).  
 In the case $j=J+1$, we have $f_{2J+2}(\lambda)\in\{\ell-2,\ell-1\} \cap \mathbb{N}$, and $f_{2J+1}(\lambda)=0$ or $f_{2J+1}(\lambda)=1$. First, consider when $f_{2J+1}(\lambda)=0$. If $f_{2J+2}(\lambda)=\ell-1$, then, by condition 2, we have $\ell\leq k-i+1$. Similarly, if $f_{2J+2}(\lambda)=\ell-2$, then $\ell\leq k-i+2$. However, if $\ell=k-i+2$, then $$(k-i+2)+k-1\equiv -i+1\not\equiv i\bmod 2,$$ which implies $f_{2J+1}(\lambda)=1$ by condition 6, a contradiction. So, we must have $\ell\leq k-i+1$. In the case when $\ell=k$ and $i=1$, condition 4 is satisfied because $$(J+1)(k-1)=(k-1)J+k-1+V^o_{\lambda}(J)=(k-1)J+k-1+V^o_{\lambda}(J+1)$$ since $V^o_{\lambda}(J) = V^o_{\lambda}(J+1) = 0$. Thus, if $\lambda$ satisfies conditions 1-8 and $f_{2J+1}(\lambda)=0$, then $\lambda$ is counted by \begin{equation}\label{a}
q^{(\ell-1)(2J+2)}+q^{(\ell-2)(2J+2)}.
\end{equation} 
We note that in the case $\ell=1$ there is no case where $f_{2J+2}(\lambda) = \ell-2$ and so in this case $\lambda$ is counted by the first term in (\ref{a}).
Conversely, we can see immediately that if $\lambda$ is counted by (\ref{a}), then it is also counted by $\rnht{i}{\ell}{J+1}{J}$. 

Now, consider when $f_{2J+1}(\lambda)=1$. If $f_{2J+2}(\lambda)=\ell-1$, then $\ell\leq k-i$, and if $f_{2J+2}(\lambda)=\ell-2$, then $\ell\leq k-i+1$. However, if $f_{2J+2}(\lambda)=k-i+1$, then we arrive at a contradiction since $$(k-i+1)+k-1\equiv i\bmod 2$$ implies $f_{2J+1}(\lambda)=0$ by condition 6. Thus, $\ell\leq k-i$. As before, if $\ell=k$ and $i=1$, condition 4 is satisfied. Therefore, if $\lambda$ satisfies conditions 1-8 and if $f_{2J+1}(\lambda)=1$, then $\lambda$ is counted by 
\begin{equation}\label{b}
q^{2J+1}(q^{(\ell-1)(2J+2)}+q^{(\ell-2)(2J+2)}).
\end{equation}
We note that in the case $\ell=1$ there is no case where $f_{2J+2}(\lambda) = \ell-2$ and so in this case $\lambda$ is counted by the first term in (\ref{b}).
Conversely, if $\lambda$ is counted by (\ref{b}), then it is also counted by $\rnht{i}{\ell}{J+1}{J}$. We conclude that
\begin{equation}
\rnht{i}{\ell}{J+1}{J} = 
    \begin{cases}
        q^{(\ell - 1)(2J+2)} + q^{(\ell -2)(2J+2)}(1-\delta_{\ell,1}) & \text{if} \quad \ell \leq k - i + 1  \quad \text{and} \quad \ell + k - 1 \equiv i \bmod 2\\ 
        q^{2J+1}\left(  q^{(\ell - 1)(2J+2)} + q^{(\ell -2)(2J+2)}(1-\delta_{\ell,1})\right) & \text{if} \quad \ell \leq k - i \quad \text{and} \quad \ell + k - 1 \not \equiv i \bmod 2\\
        0 & \text{if} \quad \ell > k - i + 1,
    \end{cases}
\end{equation}
which agrees with $\rnh{i}{\ell}{J+1}{J}$ by inspection of (\ref{h1up}). 

Now, consider $j \geq J + 2$. The partitions $\lambda$ satisfying conditions 1-8 of the proposition can be divided into two sets: those where $f_{2j-1}(\lambda) = 0$ and those where $f_{2j-1}(\lambda) = 1$.

First, we consider when $f_{2j-1}(\lambda) = 0$. In this case, the partitions in question have either the form
$$\left( (2j)^{\ell - 1}, b_{\ell}, \hdots, b_s \right)  \quad \text{or} \quad \left( (2j)^{\ell - 2}, b_{\ell -1}, \hdots, b_s \right), $$ where  $\lambda_1 = \left( b_{\ell}, \hdots, b_s \right)$ and $\lambda_2 = \left( b_{\ell - 1}, \hdots, b_s \right)$ are partitions satisfying the first 6 conditions of the proposition having largest part at most $2j-2$. By condition $3$ of the proposition, when $f_{2j}(\lambda)=\ell-2$, it is clear that $f_{2j-2}(\lambda)\leq k- (\ell -1)$. Notice that if $f_{2j-2}(\lambda) = k - (\ell - 1 )$, then, by condition 4, we have
    \begin{align*}
        (j-1)f_{2j-2}(\lambda)+j(f_{2j-1}(\lambda)+f_{2j}(\lambda))&= (j-1)(k-\ell+1) + j(\ell-2)\\
        &=\ell + (k-1)j-k - 1 \\
        &\equiv (k-1)J+k-i+V^o_{\lambda}(j-1)\bmod 2,
    \end{align*}
    and thus,
    \begin{equation*}
        \hspace{3.8em} \ell + (k-1)(j-J) \equiv 1-i+V^o_{\lambda}(j-1)\bmod 2.
    \end{equation*}
However, condition 6 now implies $ i \equiv 1 -i  \bmod 2$, a contradiction. Thus, we have $f_{2j-2}(\lambda)\leq k- \ell$. When $f_{2j}(\lambda)=\ell-1$, we notice that if $f_{2j-2}(\lambda)= k - \ell$, then by condition 4, we have
    \begin{align*}
       (j-1)f_{2j-2}(\lambda)+j(f_{2j-1}(\lambda)+f_{2j}(\lambda))&=(j-1)(k-\ell) + j(\ell-1)\\
        &=\ell+(k-1)j-k\\
        &\equiv (k-1)J+k-i+V^o_{\lambda}(j-1)\bmod 2, 
    \end{align*}
    and thus,
    \begin{equation*}
       \hspace{2.8em} \ell+(k-1)(j-J) \equiv-i+V^o_{\lambda}(j-1)\bmod 2,
    \end{equation*}
which is valid by condition 6 since $V^o_\lambda(j-1) = V^o_\lambda(j)$. Thus, condition 4 is satisfied and we have that $f_{2j-2}(\lambda)\leq k- \ell$. Therefore, for $r=1,2$, we can see that $\lambda_r$ is counted by some $\rnht{i}{m}{j-1}{J}$ for $1\leq m\leq k-(\ell-1)$. Since it is always the case that $V^o_\lambda(j) = V^o_{\lambda_r}(j-1)$, we have
$V^o_\lambda(j) \equiv V^o_{\lambda_r}(j-1) \bmod 2$, which is, by condition 6, equivalent to 
   \begin{equation}
       \ell +(k-1)(j-J) \equiv m + (k-1)(j-1-J) \bmod 2.
   \end{equation}
However, this can be rewritten as $m\not\equiv\ell+k\bmod 2$, and so we have that $\lambda_1$ and $\lambda_2$ are counted by $\rnht{i}{m}{j-1}{J}$, where $m \not \equiv \ell + k \bmod 2$. Hence, if $\lambda$ satisfies conditions 1-8 with $f_{2j-1}(\lambda) = 0 $, then $\lambda$ is counted by
   \begin{equation}\label{recHalf1}
        q^{2j \left( \ell - 1\right)} \sum \limits^{k - \left( \ell - 1 \right)}_{\substack{{m=1} \\ {m \not \equiv \ell + k \bmod{2}}}} \rnht{i}{m}{j-1}{J} + q^{2j \left( \ell - 2\right)} \sum \limits^{k - \left( \ell - 1 \right)}_{\substack{{m=1} \\ {m \not \equiv \ell + k \bmod{2}}}} \rnht{i}{m}{j-1}{J}.
   \end{equation}
Note that if $\ell=1$, then there is no case where $f_{2j-2}(\lambda)=\ell-2$. Thus, if $\ell=1$, $\lambda$ is counted by the first sum in (\ref{recHalf1}). 
Conversely, assume $\lambda$ is counted by $\rnht{i}{m}{j-1}{J}$ for some $1\leq m\leq k-(\ell-1)$ that satisfies $m \not \equiv\ell + k \bmod 2$. If $f_{2j-2}(\lambda)=m-2$, then it follows immediately that $((2j)^{\ell-1},\lambda)$ and $((2j)^{\ell-2},\lambda)$ are counted by $\rnht{i}{\ell}{j}{J}$. Similarly, if $f_{2j-2}(\lambda)=m-1$, then $((2j)^{\ell-2},\lambda)$ is counted by $\rnht{i}{\ell}{j}{J}$. We can also see that $((2j)^{\ell-1},\lambda)$ is counted by $\rnht{i}{\ell}{j}{J}$ because when $m=k-(\ell-1)$, condition 4 is satisfied.

Next, consider partitions where $f_{2j-1}(\lambda) = 1$. In this case, the partitions in question have either the form $$\left( (2j)^{\ell - 1}, 2j-1, b_{\ell + 1}, \hdots, b_s \right)\quad \text{or} \quad\left( (2j)^{\ell - 2}, 2j-1, b_{\ell}, \hdots, b_s \right), $$ where $\lambda_1'=\left( b_{\ell + 1}, \hdots, b_s \right)$ and $\lambda_2'=\left( b_{\ell}, \hdots, b_s \right)$ are partitions satisfying the first 6 conditions of the proposition with largest part at most $2j - 2$. When $f_{2j}(\lambda) = \ell - 2$, we  have, from condition 3, that $f_{2j-2}(\lambda) \leq k - \ell$; however, as before, if $f_{2j-2}(\lambda) = k -\ell $, then condition $4$ is violated, and thus, we have $f_{2j-2}(\lambda) \leq k- \ell - 1$. 
Furthermore, by condition 3, we have that if $f_{2j}(\lambda) = \ell - 1 $, then $ f_{2j-2}(\lambda) \leq k - \ell - 1$, and, as above, we have that condition $4$ is satisfied when $ f_{2j-2}(\lambda) = k - \ell - 1$. Thus, $f_{2j-2}(\lambda)\leq k - \ell - 1$. Therefore, for $r=1,2$, $\lambda_r'$ is counted by some $\rnht{i}{m}{j-1}{J}$ for $1\leq m\leq k-\ell$. Since it is always the case that $V^o_\lambda(j) = V^o_{\lambda_r'}(j-1) + 1$, we have that $V^o_\lambda(j) \equiv V^o_{\lambda_r'}(j-1) + 1\bmod 2$, which is, by condition 6, equivalent to 
    \begin{equation}
        \ell +(k-1)(j-J) \equiv m + (k-1)(j-1-J) + 1 \bmod 2
    \end{equation}
for $1\leq m\leq k-\ell$. However, this can be rewritten as $m \equiv\ell+k\bmod 2$, and so, $\lambda_1'$ and $\lambda_2'$ are counted by $\rnht{i}{m}{j-1}{J}$, where $m \equiv \ell + k \bmod 2$. Hence, if $\lambda$ satisfies conditions 1-8 with $f_{2j-1}(\lambda) = 1 $, then $\lambda$ is counted by
     \begin{equation}\label{recHalf2}
        q^{2j \left( \ell - 1\right) + 2j - 1} \sum \limits^{k - \ell }_{\substack{{m=1} \\ {m  \equiv \ell + k \bmod{2}}}} \rnht{i}{m}{j-1}{J} + q^{2j \left( \ell - 2\right) + 2j - 1} \sum \limits^{k - \ell}_{\substack{{m=1} \\ {m \equiv \ell + k\bmod{2}}}} \rnht{i}{m}{j-1}{J}.
     \end{equation}
Again, note that if $\ell=1$, there is no case where $f_{2j-2}(\lambda)=\ell-2$. Thus, if $\ell=1$, $\lambda$ is counted by the first sum in (\ref{recHalf2}). Conversely, assume $\lambda$ is counted by $\rnht{i}{m}{j-1}{J}$ for some $1\leq m\leq k-\ell$ that satisfies $m \equiv\ell + k\bmod 2$. If $f_{2j-2}(\lambda)=m-2$, then it follows immediately that $((2j)^{\ell-1},2j - 1, \lambda)$ and $((2j)^{\ell-2},2j - 1, \lambda)$ are counted by $\rnht{i}{\ell}{j}{J}$. Similarly, if $f_{2j-2}(\lambda)=m-1$, then $((2j)^{\ell-2},2j-1, \lambda)$ is counted by $\rnht{i}{\ell}{j}{J}$. We can also see that $((2j)^{\ell-1},2j-1 ,\lambda)$ is counted by $\rnht{i}{\ell}{j}{J}$ because when $m=k-\ell$, condition 4 is satisfied.

Thus, we conclude that the partitions satisfying conditions 1-8 are counted by 
    \begin{align}
        \rnht{i}{\ell}{j}{J}&=q^{2j \left( \ell - 1\right)} \left( q^{2j - 1} \sum \limits^{k - \ell }_{\substack{{m=1} \\ {m \equiv \ell + k\bmod{2}}}} \rnht{i}{m}{j-1}{J}  +  \sum \limits^{k - \left( \ell - 1 \right)}_{\substack{{m=1} \\ {m \not \equiv \ell + k \bmod{2}}}} \rnht{i}{m}{j-1}{J} \right) \nonumber\\& + (1-\delta_{\ell,1})q^{2j \left( \ell - 2\right) } \left( q^{2j - 1} \sum \limits^{k - \ell}_{\substack{{m=1} \\ {m  \equiv \ell + k \bmod{2}}}} \rnht{i}{m}{j-1}{J} + \sum \limits^{k - \left( \ell - 1 \right)}_{\substack{{m=1} \\ {m \not \equiv \ell + k \bmod{2}}}} \rnht{i}{m}{j-1}{J} \right),
    \end{align}
which is the recursion (\ref{hMatrixRecursions}). This proves the proposition. 

\end{proof}

Next, as an important step in our motivated proof of the Bressoud-G{\"o}llnitz-Gordon identities, we will need the following result: \begin{corol}\label{h1h2}
For $j\geq J+1$ and $1\leq i\leq k$, the polynomial $\rnh{i}{1}{j}{J}+\rnh{i}{2}{j}{J}$ is the generating function for partitions $\lambda=(b_1,\dots,b_s)$, with $b_1\geq\hdots\geq b_s$, satisfying the conditions: 
    \begin{enumerate}
        \item No odd parts are repeated,
        \item $f_{2J+1}(\lambda) + f_{2J+2}(\lambda)  \leq k-i$,
        \item $f_{2t}(\lambda)  + f_{2t+1}(\lambda)  + f_{2t+2}(\lambda)  \leq k-1$ for all $t \ge 0$,
        \item if $f_{2t}(\lambda)  + f_{2t+1}(\lambda)  + f_{2t+2}(\lambda)  = k-1$, then 
        \begin{equation*}
            tf_{2t}(\lambda)  + \left( t+1\right) \left(f_{2t+1}(\lambda)  + f_{2t+2}(\lambda) \right)  \equiv \left( k-1\right)J + k - i + V^o_{\lambda}(t) \bmod{2}, 
        \end{equation*}
        \item the smallest part $b_s >2J$,
    \item the largest part $b_1 \leq 2j$,
    \item $f_{2j}(\lambda) \in \{0,1\}$.
                \end{enumerate}
\end{corol}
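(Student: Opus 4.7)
The plan is to deduce Corollary~\ref{h1h2} directly from Proposition~\ref{hcombinatorics} by combining the partition interpretations for $\ell=1$ and $\ell=2$. By Proposition~\ref{hcombinatorics} with $\ell=1$, the polynomial $\rnh{i}{1}{j}{J}$ enumerates partitions satisfying conditions (1)--(5) of the corollary together with $b_1\le 2j$, $f_{2j}(\lambda)=0$ (since $\{\ell-1,\ell-2\}\cap\mathbb{N}=\{0\}$ when $\ell=1$), and a parity condition on $V^o_\lambda(j)$ specific to $\ell=1$. Setting $\ell=2$ yields a similar description, but with $f_{2j}(\lambda)\in\{0,1\}$ and the complementary parity condition on $V^o_\lambda(j)$, since $1$ and $2$ have opposite parities modulo $2$.

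The first step I would take is to observe that the two sets are disjoint: any shared partition would need $f_{2j}(\lambda)=0$ together with two incompatible parity conditions on $V^o_\lambda(j)$, which is impossible. The second step is to identify the union with the set in the corollary. For partitions with $f_{2j}(\lambda)=0$, exactly one of the two parities of $V^o_\lambda(j)$ matches the $\ell=1$ constraint while the other matches the $\ell=2$ constraint, so the two sets together exhaust all partitions satisfying conditions (1)--(6) of the corollary with $f_{2j}(\lambda)=0$, each counted once. For partitions with $f_{2j}(\lambda)=1$, only $\rnh{i}{2}{j}{J}$ can contribute, and here one must argue that the corollary's conditions automatically force the $\ell=2$ parity condition on $V^o_\lambda(j)$.

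The main obstacle will be verifying this last point. My approach is to analyze condition (4) of the corollary at $t=j-1$ under the hypothesis $f_{2j}(\lambda)=1$: since $f_{2j-2}(\lambda)+f_{2j-1}(\lambda)+f_{2j}(\lambda)\le k-1$ and $f_{2j}(\lambda)=1$, any equality case produces a congruence on $(j-1)f_{2j-2}(\lambda)+j(f_{2j-1}(\lambda)+1)$ modulo $2$, expressed in terms of $(k-1)J+k-i+V^o_\lambda(j-1)$. Using the identity $V^o_\lambda(j)=V^o_\lambda(j-1)+f_{2j-1}(\lambda)$, together with the constraint $f_{2J+1}(\lambda)+f_{2J+2}(\lambda)\le k-i$ and the parity bookkeeping for the number of odd parts, this congruence should translate precisely into the $\ell=2$ parity condition for $V^o_\lambda(j)$. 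Once this identification is in place, the equality of generating functions follows by summing over the cases $f_{2j}(\lambda)=0$ and $f_{2j}(\lambda)=1$ and applying Proposition~\ref{hcombinatorics} term by term.
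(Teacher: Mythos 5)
Your skeleton---splitting via Proposition \ref{hcombinatorics} with $\ell=1$ and $\ell=2$, getting disjointness from the complementary parity conditions on $V^o_\lambda(j)$, and exhausting the case $f_{2j}(\lambda)=0$ because exactly one of the two parities holds---is the same as the paper's, and those parts of your argument are fine. The gap is precisely the step you flag yourself, and it cannot be closed along the lines you propose: conditions 1--7 of the corollary together with $f_{2j}(\lambda)=1$ do \emph{not} force the $\ell=2$ parity of $V^o_\lambda(j)$. Condition 4 only says something when $f_{2t}(\lambda)+f_{2t+1}(\lambda)+f_{2t+2}(\lambda)=k-1$, so your appeal to it at $t=j-1$ is vacuous whenever that sum is smaller, and then nothing constrains the parity. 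Explicitly, take $k=3$, $i=1$, $J=0$, $j=2$ and $\lambda=(4)$: it satisfies all seven conditions and has $f_{2j}(\lambda)=1$, yet $V^o_\lambda(j)=0$, which is the parity attached to $\ell=1$. Correspondingly $\rnh{1}{1}{2}{0}+\rnh{1}{2}{2}{0}=1+q+q^2+2q^3+2q^4+q^5+q^6+2q^7$ has coefficient $2$ in degree $4$, while three partitions of $4$, namely $(4)$, $(3,1)$, $(2,2)$, satisfy conditions 1--7 (the partition $(4)$ is in fact picked up by $\rnh{1}{3}{2}{0}$, not by the first two columns). So the implication you plan to prove is false, and the proposal cannot be completed as written.

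What your analysis actually uncovers is that the identity of generating functions only holds verbatim after either adding the proviso that a partition with $f_{2j}(\lambda)=1$ must have $V^o_\lambda(j)$ of the parity prescribed by condition 6 of Proposition \ref{hcombinatorics} with $\ell=2$, or weakening the claim to agreement of the two series in all degrees below $2j$ (automatic, since a partition of $n<2j$ has no part equal to $2j$), which is all that the limit argument in Theorem \ref{GCombinatorics} uses. For comparison, the paper's own proof is much shorter than yours: it observes that, since every part is at most $2j$, the parity of $V^o_\lambda(j)$ equals the parity of $|\lambda|$, so $\rnh{i}{1}{j}{J}$ and $\rnh{i}{2}{j}{J}$ are supported on even and odd powers of $q$ in a complementary fashion, and then concludes; it never examines the boundary case $f_{2j}(\lambda)=1$. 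Your instinct that this case requires a separate argument was therefore correct, but the argument you sketch is not available, and no argument can be, without adjusting the statement as above.
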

\begin{proof}
Consider the partitions counted by $\rnh{i}{1}{j}{J}$. We can see that condition 6 in Proposition \ref{hcombinatorics} becomes 
    \begin{equation}
        V^o_{\lambda}(j)\equiv
        \begin{cases}
            0\bmod 2\quad\text{if}\quad 1+(k-1)(j-J)\equiv i\bmod 2 \\
            1\bmod 2\quad\text{if}\quad 1+(k-1)(j-J)\not\equiv i\bmod 2.
        \end{cases} \vspace{2mm}\\ 
    \end{equation} 
If $1+(k-1)(j-J)\equiv i\bmod 2$, then $\rnh{i}{1}{j}{J}$ counts partitions of even integers. This implies $2+(k-1)(j-J)\not\equiv i\bmod 2$, and so $\rnh{i}{2}{j}{J}$ counts partitions of odd integers. Similarly, if $1+(k-1)(j-J)\not\equiv i\bmod 2$, then $\rnh{i}{1}{j}{J}$ counts partitions of odd integers and $\rnh{i}{2}{j}{J}$ counts partitions of even integers. Thus, we can see immediately from Proposition \ref{hcombinatorics} that $\rnh{i}{1}{j}{J}+\rnh{i}{2}{j}{J}$ counts the partitions $\lambda$ satisfying conditions 1-7 of the Corollary. 
    \end{proof}

Now, we complete our motivated proof of the Bressoud-G{\"o}llnitz-Gordon identities by giving a combinatorial interpretation for the series $G_{(k-1)J+i}$. The identities follow when we set $J=0$. In this proof, we use the definition of the limit of a sequence of series in $\mathbb{C}[[q]]$, which we now recall from \cite{CKLMQRS}.  Let $\{A_j(q)\}_{j=0}^\infty$ be a sequence of elements of $\mathbb{C}[[q]]$. We say that 
\begin{equation}
    \lim_{j \to \infty} A_j(q)
\end{equation}
exists if, for each $m \ge 0$, there is some $J_m > 0$ such that the coefficients of $q^m$ in each series $A_j(q)$ are equal for $j \ge J_m$. In other words, the limit exists if the coefficients of each power of $q$ stabilize as $j \to \infty$. We now have the following result:
\begin{theo}\label{GCombinatorics}
For $1\leq i\leq k$, the power series $G_{(k-1)J+i}$ is the generating function for partitions  $\lambda=(b_1,\dots,b_s)$, with $b_1\geq\hdots\geq b_s$, satisfying the conditions: 
    \begin{enumerate}
        \item No odd parts are repeated,
        \item $f_{2J+1}(\lambda) + f_{2J+2}(\lambda)  \leq k-i$,
        \item $f_{2t}(\lambda)  + f_{2t+1}(\lambda)  + f_{2t+2}(\lambda)  \leq k-1$ for all $t \ge 0$,
        \item if $f_{2t}(\lambda)  + f_{2t+1}(\lambda)  + f_{2t+2}(\lambda)  = k-1$, then
        \begin{equation*}
            tf_{2t}(\lambda)  + \left( t+1\right) \left(f_{2t+1}(\lambda)  + f_{2t+2}(\lambda) \right)  \equiv \left( k-1\right)J + k - i + V^o_{\lambda}(t) \bmod{2},
        \end{equation*}
        \item the smallest part $b_s >2J$.
    \end{enumerate}
\end{theo}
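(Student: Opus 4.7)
The plan is to extract $G_{(k-1)J+i}$ as the $j\to\infty$ limit of a carefully chosen subsum of the shelf decomposition
\begin{equation*}
G_{(k-1)J+i} = \sum_{\ell=1}^{k} \rnh{i}{\ell}{j}{J}\, G_{(k-1)j+\ell},
\end{equation*}
valid for every $j\geq J+1$ by (\ref{JMatrixRecursion}). As foreshadowed in the introduction, the individual polynomials $\rnh{i}{\ell}{j}{J}$ are not expected to stabilize as $j\to\infty$, because of the $j$-dependent parity condition 6 of Proposition \ref{hcombinatorics}. The key observation is that Corollary \ref{h1h2} already enumerates $\rnh{i}{1}{j}{J}+\rnh{i}{2}{j}{J}$ as the generating function for partitions satisfying conditions 1--5 of the theorem \emph{plus} the truncations $b_1\leq 2j$ and $f_{2j}(\lambda)\in\{0,1\}$. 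So pairing the first two columns should yield a well-behaved limit.

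First, I would verify that $\rnh{i}{1}{j}{J}+\rnh{i}{2}{j}{J}$ stabilizes in the formal power series sense and identifies with the theorem's claimed generating function. Fix a target degree $m$. Any partition $\lambda$ of $m$ satisfying conditions 1--5 has largest part at most $m$, so as soon as $2j>m$ both truncations of Corollary \ref{h1h2} are automatic ($b_1\leq m<2j$ and $f_{2j}(\lambda)=0$). Thus, for $j$ large, the coefficient of $q^m$ in $\rnh{i}{1}{j}{J}+\rnh{i}{2}{j}{J}$ coincides with the coefficient of $q^m$ in the theorem's generating function, establishing both the existence of the limit and its combinatorial identity.

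The second step is to show that the remaining pieces of the shelf decomposition do not contribute to the limit. Rewriting
\begin{equation*}
G_{(k-1)J+i} = \bigl(\rnh{i}{1}{j}{J}+\rnh{i}{2}{j}{J}\bigr) + \sum_{\ell=1}^{2}\rnh{i}{\ell}{j}{J}\bigl(G_{(k-1)j+\ell}-1\bigr) + \sum_{\ell=3}^{k}\rnh{i}{\ell}{j}{J}\,G_{(k-1)j+\ell},
\end{equation*}
the Empirical Hypothesis (Theorem \ref{EmpiricalHypothesis}) gives $G_{(k-1)j+\ell}-1\in q^{2j+1}\mathbb{C}[[q]]$ for $\ell=1,2$, pushing the middle error into $q^{2j+1}\mathbb{C}[[q]]$. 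For the tail $\ell\geq 3$, condition 8 of Proposition \ref{hcombinatorics} forces every partition counted by $\rnh{i}{\ell}{j}{J}$ to contain at least $\ell-2\geq 1$ copies of $2j$, giving $\rnh{i}{\ell}{j}{J}\in q^{2j(\ell-2)}\mathbb{C}[[q]]\subseteq q^{2j}\mathbb{C}[[q]]$ and hence the tail lies in $q^{2j}\mathbb{C}[[q]]$. Comparing coefficients of $q^m$ for any fixed $m$ and any $j$ with $2j>m$, everything except the paired sum drops out, so $G_{(k-1)J+i}$ agrees coefficient-by-coefficient with the limit from the first step.

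The main conceptual obstacle, as the authors note in the introduction, is identifying the correct grouping — pairing only the first two columns, while treating the rest as a vanishing tail — so that the limit behaves. With Corollary \ref{h1h2} and the Empirical Hypothesis in hand, the remainder is clean degree bookkeeping; no delicate cancellation among the $\rnh{i}{\ell}{j}{J}$ is required beyond the already-proven pairwise combinatorics.
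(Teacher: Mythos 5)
Your proposal is correct and follows essentially the same route as the paper: the same shelf decomposition $G_{(k-1)J+i}=\sum_{\ell}\rnh{i}{\ell}{j}{J}G_{(k-1)j+\ell}$, the vanishing of the $\ell\geq 3$ terms via the $q^{2j(\ell-2)}$ divisibility coming from condition 8 of Proposition \ref{hcombinatorics}, the Empirical Hypothesis to absorb $G_{(k-1)j+\ell}-1$ into $q^{2j+1}\mathbb{C}[[q]]$, and Corollary \ref{h1h2} to identify $\lim_{j\to\infty}\bigl(\rnh{i}{1}{j}{J}+\rnh{i}{2}{j}{J}\bigr)$ with the stated generating function once the truncation conditions become vacuous. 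Your phrasing as a direct coefficient-by-coefficient comparison with explicit error terms is only a cosmetic repackaging of the paper's limit argument.
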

\begin{proof}
For each $1\leq i\leq k$ and $j\geq J+1$, we have
    \begin{equation}
        G_{(k-1)J+i}=\sum_{n=1}^{k}\rnh{i}{n}{j}{J}G_{(k-1)j+n}.
    \end{equation}
First, we show that 
\begin{equation}\label{lim1}
    \lim_{j\to\infty}\rnh{i}{\ell}{j}{J}=0
\end{equation}
for all $\ell>2$. For $m\geq 0$, consider the coefficient of $q^m$ in $\rnh{i}{\ell}{j}{J}$. Taking $J_m>\frac{m}{2}$, we can see that if $j\geq J_m$, then the coefficient of $q^m$ in $\rnh{i}{\ell}{j}{J}$ is $0$,
thus proving (\ref{lim1}).
Since the coefficients of $G_{(k-1)J+i}$ are stable as $j\to\infty$, the following limit exists:
    \begin{align}
        G_{(k-1)J+i}=\lim_{j\to\infty}\sum_{n=1}^{k}\rnh{i}{n}{j}{J}G_{(k-1)j+n}.
    \end{align}
Furthermore, by (\ref{lim1}) and the Empirical Hypothesis, we can see that 
    \begin{equation*}
        \lim_{j\to\infty}\sum_{n=3}^{k}\rnh{i}{n}{j}{J}G_{(k-1)j+n}=\sum_{n=3}^{k}\lim_{j\to\infty}\rnh{i}{n}{j}{J}\lim_{j\to\infty}G_{(k-1)j+n}=\sum_{n=3}^{k}0 \cdot 1=0
    \end{equation*}
because
    \begin{equation}
        \lim_{j \to \infty} G_{(k-1)j+i} = 1
    \end{equation}
for $1 \le i \le k$.
So, we have 
    \begin{align*}
        G_{(k-1)J+i}&=\lim_{j\to\infty}\sum_{n=1}^{k}\rnh{i}{n}{j}{J}G_{(k-1)j+n}-\lim_{j\to\infty}\sum_{n=3}^{k}\rnh{i}{n}{j}{J}G_{(k-1)j+n}\\
        &=\lim_{j\to\infty}\left(\rnh{i}{1}{j}{J}G_{(k-1)j+1}+\rnh{i}{2}{j}{J}G_{(k-1)j+2}\right)\\
        &=\lim_{j\to\infty}\left(\rnh{i}{1}{j}{J}+\rnh{i}{2}{j}{J}+q^{2j+1}\gamma(q)\right)
    \end{align*}
for some $\gamma(q)\in\mathbb{C}[[q]]$ by the Empirical Hypothesis. We can see that 
    \begin{equation}
        \lim_{j\to\infty}q^{2j+1}\gamma(q)=0
    \end{equation}
because, for any $m\geq 0$, taking $J_m>\frac{m-1}{2}$, $j\geq J_m$ implies that the coefficient of $q^m$ in $q^{2j+1}\gamma(q)$ is $0$. 
Thus, we have
    \begin{align}
        G_{(k-1)J+i}=\lim_{j\to\infty}\left(\rnh{i}{1}{j}{J}+\rnh{i}{2}{j}{J}+q^{2j+1}\gamma(q)\right)-\lim_{j\to\infty}q^{2j+1}\gamma(q)=\lim_{j\to\infty}\left(\rnh{i}{1}{j}{J}+\rnh{i}{2}{j}{J}\right).
    \end{align}
We conclude that as $j\to\infty$, conditions 6 and 7 of Corollary \ref{h1h2} vanish, thus proving the theorem.
\end{proof}

Finally, we give a combinatorial interpretation of the ghost series, $\tilde{G}_{\ell}$.
\begin{theo}\label{GhostCombinatorics}
For $2\leq i\leq k$, the power series $\Tilde{G}_{(k-1)J+i}$ is the generating function for partitions $\lambda=(b_1,\dots,b_s)$, with $b_1\geq\hdots\geq b_s$, satisfying the conditions: 
    \begin{enumerate}
        \item No odd parts are repeated,
        \item $f_{2J+1}(\lambda)  + f_{2J+2}(\lambda)   \leq k-i$,
        \item $f_{2t}(\lambda)   + f_{2t+1}(\lambda)   + f_{2t+2}(\lambda)   \leq k-1$ for all $t \ge 0$,
        \item if $f_{2t}(\lambda)   + f_{2t+1}(\lambda)  + f_{2t+2}(\lambda)   = k-1$, then \begin{equation}\label{paritycon}
        tf_{2t}(\lambda)   + \left( t+1\right) \left(f_{2t+1}(\lambda)   + f_{2t+2}(\lambda) \right)  \equiv \left( k-1\right)J + k - i + 1 + V^o_{\lambda }(t) \bmod{2},
        \end{equation}
        \item the smallest part $b_s >2J$.
    \end{enumerate}
\end{theo}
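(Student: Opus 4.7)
The plan is to split into two cases: $i=k$, which reduces directly to Theorem \ref{GCombinatorics} on a higher shelf, and $2 \le i \le k-1$, which uses the recursion (\ref{GhostRec1}) together with a bijection on the combinatorial side. Throughout, let $\tilde{B}_i^J$ denote the set of partitions satisfying the claimed conditions, and let $B_i^J$ denote the corresponding set from Theorem \ref{GCombinatorics}.

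For $i=k$, I would use the identity $\tilde{G}_{(k-1)J+k} = G_{(k-1)(J+1)+2}$ from (\ref{Gdef2}) and apply Theorem \ref{GCombinatorics} with shelf $J+1$ and index $2$. This produces a description via partitions with smallest part $>2J+2$, $f_{2J+3}+f_{2J+4}\le k-2$, and parity constant $(k-1)(J+1)+k-2$. A direct check shows this coincides with the claim: the parity constants are congruent modulo $2$ since $(k-1)(J+1)+k-2 \equiv (k-1)J+1 \bmod 2$; the combined conditions ``smallest part $>2J$'' and ``$f_{2J+1}+f_{2J+2}\le 0$'' from the claim are equivalent to ``smallest part $>2J+2$''; and the extra bound $f_{2J+3}+f_{2J+4}\le k-2$ is automatic on the ghost side because assuming $f_{2J+3}+f_{2J+4}=k-1$ would force the parity condition at $t=J+1$ with $V^o_\lambda(J+1)=0$ to fail.

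For $2 \le i \le k-1$, I would rewrite (\ref{GhostRec1}) as
\begin{equation*}
(1+q^{2J+2})\tilde{G}_{(k-1)J+i} = G_{(k-1)J+i-1} + q^{2J+2}G_{(k-1)J+i+1}.
\end{equation*}
A direct comparison of conditions yields the chain $B_{i+1}^J \subseteq \tilde{B}_i^J \subseteq B_{i-1}^J$: conditions (1), (3), (5) are identical in all three descriptions, the parity constants $k-i-1$, $k-i+1$, $k-i+1$ all agree modulo $2$, and the three sets differ only in the upper bound $k-i-1$, $k-i$, $k-i+1$ on $f_{2J+1}+f_{2J+2}$. Since $1+q^{2J+2}$ is a unit in $\mathbb{C}[[q]]$, the claim reduces, via Theorem \ref{GCombinatorics}, to producing a weight-preserving bijection between $B_{i-1}^J \setminus \tilde{B}_i^J$ (partitions with $f_{2J+1}+f_{2J+2}=k-i+1$) and $\tilde{B}_i^J \setminus B_{i+1}^J$ (partitions with $f_{2J+1}+f_{2J+2}=k-i$), the latter adjoined with an extra part $2J+2$.

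The bijection I propose is $\mu \leftrightarrow \mu+(2J+2)$. For $\lambda \in B_{i-1}^J$ with $f_{2J+1}(\lambda)+f_{2J+2}(\lambda)=k-i+1$, removing a part $2J+2$ is well-defined since $k-i+1\ge 2$ together with $f_{2J+1}(\lambda)\le 1$ forces $f_{2J+2}(\lambda)\ge 1$; a routine check confirms $\mu=\lambda-(2J+2)\in\tilde{B}_i^J$. The main obstacle is the backward direction: given $\mu\in\tilde{B}_i^J$ with $f_{2J+1}(\mu)+f_{2J+2}(\mu)=k-i$, I must show $\lambda=\mu+(2J+2)\in B_{i-1}^J$, and the only delicate points are the triple sum and parity at $t=J+1$. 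Writing $a=f_{2J+1}(\mu)$, $b=f_{2J+2}(\mu)$, $c=f_{2J+3}(\mu)$, $d=f_{2J+4}(\mu)$ with $a+b=k-i$ and noting $V^o_\mu(J+1)=a$, I would show that assuming $b+c+d=k-1$ makes the parity condition on $\mu$ at $t=J+1$ reduce, after substituting $c+d=k-1-b$, to $-2a\equiv 1 \bmod 2$, which is absurd; hence $b+c+d\le k-2$. When $\lambda$'s triple sum at $t=J+1$ is exactly $k-1$ (i.e.\ $b+c+d=k-2$), an analogous computation shows the required parity on $\lambda$ reduces to $-2a\equiv 0 \bmod 2$, which always holds. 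Parity conditions at $t\notin\{J,J+1\}$ are preserved since adjoining an even part leaves $V^o$, odd-part counts, and remote triple sums untouched; the parity at $t=J$ for $\lambda$ activates only when $i=2$ and is immediate. Dividing the resulting generating-function identity by $1+q^{2J+2}$ completes the proof.
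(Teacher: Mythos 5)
Your proposal is correct, but for the main range $2\le i\le k-1$ it takes a genuinely different route from the paper. The paper never invokes the same-shelf relation (\ref{GhostRec1}) here; instead it expands the ghost series through the shelf-$(J+1)$ recursions (\ref{Gdef2})--(\ref{Gdef4}) into the nonnegative combination $\tilde{G}_{(k-1)J+i}=q^{(2J+2)(k-i)}G_{(k-1)(J+1)+k-i+2}+q^{(2J+2)(k-i-1)+2J+1}G_{(k-1)(J+1)+k-i+1}+G_{(k-1)J+i+1}$, reads each summand combinatorially via Theorem \ref{GCombinatorics} (adjoining $k-i$ parts equal to $2J+2$, respectively $k-i-1$ parts $2J+2$ together with one part $2J+1$, respectively nothing), checks the triple-sum and parity conditions transfer in each case, and gets the converse by stripping the parts $2J+1$ and $2J+2$. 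You instead stay on shelf $J$: you show the conjectured generating functions satisfy $(1+q^{2J+2})\tilde{F}_i=F_{i-1}+q^{2J+2}F_{i+1}$ by the nesting $B_{i+1}^J\subseteq\tilde{B}_i^J\subseteq B_{i-1}^J$ and the add-one-part-$2J+2$ bijection between the difference sets, then divide by the unit $1+q^{2J+2}$ and compare with (\ref{GhostRec1}). I verified the two parity computations you single out (the impossibility of $b+c+d=k-1$ for $\mu$, and the automatic parity at $t=J+1$ for $\lambda$ when $b+c+d=k-2$), and also the direction you call routine, where condition 4 at $t=J+1$ is in fact vacuous for $\mu$ because condition 3 for $\lambda$ already forces $b+c+d\le k-2$; your $i=k$ case coincides with the paper's, both resting on $\tilde{G}_{(k-1)J+k}=G_{(k-1)(J+1)+2}$. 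The paper's route buys a direct three-class decomposition of the ghost partitions with no division, uniform over $2\le i\le k$ and in keeping with the motivated-proof philosophy; your route buys a shorter verification (a single part is added or removed, and only two parity checks arise) at the cost of inverting $1+q^{2J+2}$, which is harmless for this theorem but is precisely the kind of division the ghost series were introduced to avoid.
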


\begin{proof}
Using (\ref{Gdef2}) - (\ref{Gdef4}), we have for $J \geq 0$ and $2\leq i\leq k$ that
    \begin{align} 
        \Tilde{G}_{(k-1)J+k} &= G_{(k-1)(J+1)+2}\label{GhostComb1} \\
        \Tilde{G}_{(k-1)J+i} &=q^{(2J+2)(k-i)}G_{(k-1)(J+1)+k-i+2}+q^{(2J+2)(k-i-1)+2J+1}G_{(k-1)(J+1)+k-i+1} \nonumber \\
        &\hphantom{=}+G_{(k-1)J+i+1} \label{GhostComb2}.
    \end{align}
We first consider the case of  $\Tilde{G}_{(k-1)J+k}$. Let $\lambda$ be a partition counted by 
    \begin{equation*}
        G_{(k-1)(J+1)+2}.
    \end{equation*}
It is immediate from Theorem \ref{GCombinatorics} that $\lambda$ satisfies conditions $1,2,3$, and $5$ of our theorem. Moreover, we note that the parity condition (\ref{paritycon}) of our theorem is the same as that which $\lambda$ has from Theorem \ref{GCombinatorics} since, for all $t \ge 0$, $$(k+1)J+k-k+1+V^o_{\lambda}(t) \equiv (k-1)(J+1)+k-2+V^o_{\lambda}(t) \bmod{2}.$$ Hence, $\lambda$ satisfies the conditions of the theorem.

Now, we consider the case of ${\Tilde{G}}_{(k-1)J+i}$. Let $\lambda$ be a partition counted by the right-hand side of (\ref{GhostComb2}).  It is immediate from Theorem \ref{GCombinatorics} that $\lambda$ satisfies conditions $1,2$, and $5$ of our theorem. We verify that $\lambda$ satisfies conditions $3$ and $4$ of our theorem for each of the cases. 
    
First, we consider partitions $\lambda$ counted by 
    \begin{equation}\label{GhostPart1}
        q^{(2J+2)(k-i)}G_{(k-1)(J+1)+k-i+2}.
    \end{equation}
By condition 2 of Theorem \ref{GCombinatorics}, we have $f_{2J+3}( \lambda)+f_{2J+4}( \lambda)\le i-2$. Since $f_{2J+2}( \lambda)=k-i$, we have $f_{2J+2}(\lambda)+f_{2J+3}(\lambda)+f_{2J+4}(\lambda)\leq k-2$. Thus, condition 3 of our theorem is satisfied. Additionally, we note that $G_{(k-1)(J+1)+k-i+2}$ has the same parity condition as that of our theorem since, for all $t\geq 0$, we have
  \begin{align*}
      (k-1)J+k-i+1 + V^o_{\lambda}(t) &\equiv (k-1)(J+1)+k-(k-i+2) + V^o_{\lambda_1}(t) \bmod{2},
  \end{align*}
where $\lambda_1$ is a partition counted by $G_{(k-1)(J+1)+k-i+2}$. Further, $V^o_{\lambda_1}(t)$ remains unchanged by the addition of the $k-i$ parts $2J+2$. Hence, $\lambda$ satisfies the conditions of our theorem. 

Next, consider partitions $\lambda$ counted by 
    \begin{equation}\label{GhostPart2}
        q^{(2J+2)(k-i-1)+2J+1}G_{(k-1)(J+1)+k-i+1}.
    \end{equation}
By condition 2 of Theorem \ref{GCombinatorics}, we have $f_{2J+3}(\lambda)+f_{2J+4}(\lambda)\leq i-1$, and since $f_{2J+2}(\lambda)=k-i-1$, it follows that $f_{2J+2}(\lambda)+f_{2J+3}(\lambda)+f_{2J+4}(\lambda)\leq k-2$ so that condition 3 in our theorem is satisfied. Here, we note that partitions counted by $G_{(k-1)(J+1)+k-i+1}$ do not initially have the same the parity condition as our theorem since, for all $t \ge 0$,
    \begin{align*}
        (k-1)J+k-i+1 + V^o_{\lambda}(t)&\not \equiv (k-1)(J+1)+k-(k-i+1)+ V^o_{\lambda_2}(t) \bmod{2},
    \end{align*}
where $\lambda_2$ is a partition counted by $G_{(k-1)(J+1)+k-i+1}$. However, this is corrected with the addition of the part $2J+1$. So, $\lambda$ satisfies the conditions of our theorem. 

Finally, assume $\lambda$ is counted by $G_{(k-1)J+i+1}$. Condition $3$ of our theorem is immediate. Further, we have that $G_{(k-1)J+i+1}$ has the same parity condition as that of our theorem since, for all $t \ge 0$,
   \begin{align*}
        (k-1)J+k-i+1 + V^o_{\lambda}(t) &\equiv (k-1)J+k-(i+1)+ V^o_{\lambda}(t) \bmod{2}.
    \end{align*}
Hence, $\lambda$ satisfies the conditions of our theorem.

The converse follows similarly by removing the parts $2J+1$ and $2J+2$ from a partition counted by $\tilde{G}_{(k-1)J+i}$.
\end{proof}

\begin{rema}(cf. Remark 2.1 in \cite{LZ}, Remark 7.6 in \cite{KLRS}, Remark 6.6 in \cite{CKLMQRS})
    We note that as discussed in \cite{AB}, \cite{R}, and \cite{A4}, an alternate proof of Theorem \ref{GCombinatorics} and Theorem \ref{GhostCombinatorics} which uses only the Empirical Hypothesis and does not use the combinatorial interpretation of the polynomials 
    $\rnh{i}{n}{j}{J}$ may be given as follows: Let $H_1,H_2,\dots$ and $\tilde{H}_1, \tilde{H}_2,\dots$ be sequences of formal power series satisfying (\ref{Gdef1}) - (\ref{Gdef4}) and the Empirical Hypothesis (with $H_\ell$ in place of $G_\ell$ and $\tilde{H}_\ell$ in place of $\tilde{G}_\ell$ for $\ell \ge 1$). It follows that the $H_\ell$ and $\tilde{H}_\ell$ are uniquely determined by these recursions and the Empirical Hypothesis. It is easy to see that the generating functions counting the conditions in Theorem \ref{GCombinatorics} and Theorem \ref{GhostCombinatorics} satisfy the recursions and Empirical Hypothesis as well. We also have proved that the $G_{\ell}$ and $\tilde{G}_{\ell}$ satisfy these recursions and Empirical Hypothesis as well. Therefore, by uniqueness, we have Theorem \ref{GCombinatorics} and Theorem \ref{GhostCombinatorics}. 
\end{rema}

\begin{rema}\label{Ghost1}
    Although it is not necessary in our proof, we note that, as in Remark 7.5 in \cite{KLRS}, we can define the ghost by 
    \begin{equation}\label{ghost1}
        \tilde{G}_1 = G_2. 
    \end{equation}
    Indeed, extending Theorem \ref{GhostCombinatorics},  we see that conditions 1-5 in Theorem \ref{GhostCombinatorics} with $J=0$ and $i=1$ agree precisely with conditions 1-5 in Theorem \ref{GCombinatorics} with $J=0$ and $i=2$. Additionally, (\ref{ghost1}) can also be understood using the left-hand and right-hand sides of (\ref{Gdef2}) with $j=-1$. We explore this phenomenon in more generality in the next section.
\end{rema}

\section{An $(a;x;q)$-dictionary for the Bressound-G{\"o}llnitz-Gordon identities and the ghost series}

In this section, we establish a dictionary between our series on various shelves with the series $\tilde{J}_{k,i}(a;x;q)$ defined in \cite{CoLoMa} with variables specialized appropriately. We also give ``ghost series" corresponding to $\tilde{J}_{k,i}(a;x;q)$ and explore their properties. Recall from \cite{CoLoMa} the series 
\begin{equation}\label{H}
    \tilde{H}_{k,i}(a;x;q) =  \sum_{n \ge 0} \frac{(-a)^n q^{kn^2-\binom{n}{2} + n -in}x^{(k-1)n}(1-x^iq^{2ni})(-x;q)_n(-1/a;q)_n(-axq^{n+1})_\infty}{(q^2;q^2)_n (xq^n;q)_\infty}
\end{equation}
and
\begin{equation}\label{J}
    \tilde{J}_{k,i}(a;x;q) = \tilde{H}_{k,i}(a;xq;q) + axq \tilde{H}_{k,i-1}(a;xq;q).
\end{equation}
Using (\ref{H}), we write (\ref{J}) as 
\begin{align}\label{FullJt}
    \tilde{J}_{k,i}(a;x;q)
    &=\sum_{n \ge 0} \frac{(-a)^n q^{kn^2-\binom{n}{2} + kn -in }x^{(k-1)n}(-xq;q)_n(-1/a;q)_n(-axq^{n+2})_\infty}{(q^2;q^2)_n (xq^{n+1};q)_\infty}\nonumber\\
    &\hspace{1em}\cdot \left(1-x^iq^{(2n+1)i}+axq^{n+1}(1-x^{i-1}q^{(2n+1)(i-1)})\right).
\end{align}

Our dictionary between the series $G_\ell$ for $\ell \ge 1$ in the current work and the series $\tilde{J}_{k,i}(a;x;q)$ in \cite{CoLoMa} is thus given by the following result:
\begin{proposition}
For $1 \le i \le k$, we have
\begin{equation}\label{dictionary}
    G_{(k-1)j+i} = \tilde{J}_{k,k-i+1}(1/q;q^{2j};q^2).
\end{equation}
\end{proposition}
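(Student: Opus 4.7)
The plan is to prove (\ref{dictionary}) by directly comparing the explicit formula (\ref{FullJt}) for $\tilde{J}_{k,k-i+1}(1/q;q^{2j};q^2)$ with the closed-form (\ref{Gs}) from Theorem \ref{ClosedFormG}, matching summand by summand. I would substitute $a=1/q$, $x=q^{2j}$, and interpret the third formal variable of $\tilde{J}$ as $q^2$, which replaces every ``inner'' $q$ of the formula by $q^2$.

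First, I would verify that the obviously matching ingredients line up. The sign $(-a)^n$ gives $(-1)^n q^{-n}$; combining this $q^{-n}$ with $(q^2)^{kn^2-\binom{n}{2}+kn-(k-i+1)n}$ and $(q^{2j})^{(k-1)n}$ yields, after routine bookkeeping (noting $2kn^2-2\binom{n}{2}=(2k-1)n^2+n$), the exponent $(4k-2)\binom{n}{2}+n(2k(j+1)+2(i-j)-3)$ appearing in (\ref{Gs}). The factor $(-xq;q)_n$ becomes exactly $(-q^{2j+2};q^2)_n$, and the trinomial bracket in (\ref{FullJt}) collapses, using $j+2n+1=2n+j+1$, to
\begin{equation*}
1-q^{2(2n+j+1)(k-i+1)}+q^{2n+2j+1}\bigl(1-q^{2(2n+j+1)(k-i)}\bigr),
\end{equation*}
which matches the bracket in (\ref{Gs}).

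Once these cancel, the identity reduces to the Pochhammer equality
\begin{equation*}
\frac{(q^{2n+2};q^2)_j}{F(q)(-q^2;q^2)_n(-q^{2n+1};q^2)_{j+1}}
=\frac{(-q;q^2)_n(-q^{2j+2n+3};q^2)_\infty}{(q^4;q^4)_n(q^{2j+2n+2};q^2)_\infty}.
\end{equation*}
Applying the telescoping identities $(q^{2n+2};q^2)_j(q^{2j+2n+2};q^2)_\infty=(q^{2n+2};q^2)_\infty$ and $(-q^{2n+1};q^2)_{j+1}(-q^{2j+2n+3};q^2)_\infty=(-q^{2n+1};q^2)_\infty$, together with the factorization $(q^4;q^4)_n=(q^2;q^2)_n(-q^2;q^2)_n$ and two more telescopings of $(q^{2n+2};q^2)_\infty$ and $(-q^{2n+1};q^2)_\infty$ against their finite partners, all dependence on $n$ cancels and the task is reduced to the single identity
\begin{equation*}
F(q)=\frac{(q^2;q^2)_\infty}{(-q;q^2)_\infty}.
\end{equation*}

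This final $n$-free statement is immediate from the expression $F(q)=(q;q^2)_\infty(q^4;q^4)_\infty$ already noted in Section 2, together with the two standard dissections $(q^2;q^2)_\infty=(q^2;q^4)_\infty(q^4;q^4)_\infty$ (splitting even and odd powers of $q^2$) and $(q^2;q^4)_\infty=(q;q^2)_\infty(-q;q^2)_\infty$ (coming from $1-q^{2(2m+1)}=(1-q^{2m+1})(1+q^{2m+1})$); dividing these gives the claim. The only real ``obstacle'' in this proof is purely notational bookkeeping—carefully distinguishing which occurrences of $q$ in (\ref{FullJt}) are the third formal argument (hence become $q^2$) and which are the working variable of our paper—but no nontrivial $q$-series identity beyond telescoping and the odd/even dissection of $(q^2;q^4)_\infty$ is required.
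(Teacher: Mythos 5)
Your proposal is correct and follows essentially the same route as the paper: substitute $(a;x;q)\mapsto(1/q;q^{2j};q^2)$ into (\ref{FullJt}), check that the exponent and the trinomial bracket match (\ref{Gs}), and then telescope the Pochhammer symbols (using $(q^4;q^4)_n=(q^2;q^2)_n(-q^2;q^2)_n$ and the finite-times-infinite products) down to the identity $\tfrac{1}{F(q)}=\tfrac{(-q;q^2)_\infty}{(q^2;q^2)_\infty}$, which the paper invokes directly from Section 2 and which you additionally verify by the even/odd dissection.
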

\begin{proof}
This follows immediately from (\ref{FullJt}). Indeed, we have
\begin{align}
    &\tilde{J}_{k,k-i+1}(1/q;q^{2j};q^2)\\
    &=\sum_{n \ge 0} \frac{(-1/q)^n q^{2kn^2-2\binom{n}{2} + 2kn -2(k-i+1)n }q^{2j(k-1)n}(-q^{2j+2};q^2)_n(-q;q^2)_n(-q^{2n+2j+3};q^2)_\infty}{(q^4;q^4)_n (q^{2n+2j+2};q^2)_\infty}\\
    &\hspace{1em}\cdot (1-q^{2(2n+j+1)(k-i+1)}+q^{2n+2j+1}(1-q^{2(2n+j+1)(k-i)}).
\end{align} 
Noticing that
\begin{align*}
    &\frac{(-q^{2j+2};q^2)_n (-q;q^2)_n (-q^{2n+2j+3};q^2)_\infty}{(q^4;q^4)_n(q^{2n+2j+2};q^2)_\infty}\\ 
    &=\frac{(-q;q^2)_n(-q^{2n+1};q^2)_{j+1}(-q^{2n+2j+3};q^2)_\infty(-q^{2j+2};q^2)_n(q^{2n+2};q^2)_j}{(q^2;q^2)_n(-q^2;q^2)_n(-q^{2n+1};q^2)_{j+1}(q^{2n+2j+2};q^2)_{\infty}(q^{2n+2};q^2)_j}\\
    &=\frac{(-q;q^2)_\infty (-q^{2j+2};q^2)_n(q^{2n+2};q^2)_j}{(-q^2;q^2)_n(-q^{2n+1};q^2)_{j+1}(q^2;q^2)_\infty}\\
    &=\frac{1}{F(q)}\frac{(-q^{2j+2};q^2)_n(q^{2n+2};q^2)_j}{(-q^2;q^2)_n(-q^{2n+1};q^2)_{j+1}},
\end{align*}
where we recall that $\frac{1}{F(q)} = \frac{(-q;q^2)_\infty}{(q^2;q^2)_\infty}$ now gives us precisely (\ref{dictionary}).
\end{proof}

Before we proceed, we recall some important properties of $\tilde{H}_{k,i}(a;x;q)$ and $\tilde{J}_{k,i}(a;x;q)$. In \cite{CoLoMa}, the following fundamental properties of $\tilde{H}_{k,i}(a;x;q)$ are proved:
\begin{lemma}(\cite{CoLoMa}, Lemma 2.1)
   \begin{align}
        \tilde{H}_{k,0}(a;x;q) \,\ & = \,\ 0 \label{Hproperty1} \\
        \tilde{H}_{k,-i}(a;x;q) \,\ & = \,\ -x^{-i}H_{k,i}(a;x;q) \label{Hproperty2} \\
        \tilde{H}_{k,i}(a;x;q) - \tilde{H}_{k,i-2}(a;x;q) \,\ & = \,\ x^{i-2}(1+x)\tilde{J}_{k,k-i+1}(a;x;q). \label{Hproperty3}
   \end{align}
\end{lemma}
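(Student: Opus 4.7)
Since this lemma is cited from \cite{CoLoMa}, we would in principle defer to the proof there, but we outline the direct approach from the series definition (\ref{H}). Identity (\ref{Hproperty1}) is immediate: when $i=0$ the factor $1-x^{i}q^{2ni}$ vanishes in every term, so the series is identically zero. Identity (\ref{Hproperty2}) follows from the rearrangement
\begin{equation*}
1 - x^{-i}q^{-2ni} = -x^{-i}q^{-2ni}\bigl(1 - x^{i}q^{2ni}\bigr);
\end{equation*}
the substitution $i \mapsto -i$ in (\ref{H}) shifts the $q$-exponent in the $n$th summand from $-in$ to $+in$ and replaces the bracket by $1-x^{-i}q^{-2ni}$, after which the factor $q^{-2ni}$ supplied by the rearrangement exactly cancels the exponent shift, and pulling the constant $-x^{-i}$ out of the sum recovers $-x^{-i}\tilde H_{k,i}(a;x;q)$. (We read the $H_{k,i}$ in the statement as a typographical shorthand for $\tilde H_{k,i}$.)

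For identity (\ref{Hproperty3}), the plan is to combine the two series on the left into a single sum by factoring out the common piece
\begin{equation*}
C_n \,=\, \frac{(-a)^n q^{kn^2-\binom{n}{2}+n-in}\, x^{(k-1)n}\,(-x;q)_n\,(-1/a;q)_n\,(-axq^{n+1};q)_\infty}{(q^2;q^2)_n\,(xq^n;q)_\infty},
\end{equation*}
so that the $i$-dependent factor reduces to
\begin{equation*}
(1-x^iq^{2ni}) - q^{2n}\bigl(1-x^{i-2}q^{2n(i-2)}\bigr) \,=\, (1-q^{2n}) + x^{i-2}q^{2n(i-1)}(1-xq^n)(1+xq^n).
\end{equation*}
The piece containing $(1-q^{2n})$ vanishes at $n=0$, so shifting $n \mapsto n+1$ absorbs it into the denominator $(q^2;q^2)_{n+1}$; after repackaging $(-1/a;q)_{n+1}=(1+q^n/a)(-1/a;q)_n$ together with one extra factor of $-a$ from the prefactor, this piece assembles into $axq\cdot x^{i-2}\,\tilde H_{k,k-i}(a;xq;q)$. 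The piece containing $(1-xq^n)(1+xq^n)$ uses the absorptions $(1+xq^n)(-x;q)_n = (-x;q)_{n+1} = (1+x)(-xq;q)_n$ and $(1-xq^n)(xq^n;q)_\infty = (xq^{n+1};q)_\infty$, which together implement the variable shift $x\mapsto xq$ in the Pochhammer symbols and extract the overall factor $(1+x)$, producing $x^{i-2}(1+x)\,\tilde H_{k,k-i+1}(a;xq;q)$. Combining the two pieces and invoking the definition (\ref{J}) of $\tilde J_{k,k-i+1}$ yields the claimed right-hand side.

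The main obstacle is the simultaneous bookkeeping of every $q$-exponent, $x$-power, and Pochhammer factor under the combined operations of index-shift $n \mapsto n+1$ and variable-shift $x \mapsto xq$. The exponent $kn^2 - \binom{n}{2} + n - in$ in $C_n$ must transform consistently with the corresponding exponent $kn^2 - \binom{n}{2} + n - (k-i+1)n$ appearing inside $\tilde H_{k,k-i+1}(a;xq;q)$ on the right, and matching these two conventions is where most of the tedium lies. No individual manipulation is deep, and the overall structure fits squarely within the Bailey-chain framework used in \cite{CoLoMa}, where the identities are carried out in detail.
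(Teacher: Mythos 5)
The paper itself gives no argument for this lemma at all --- it is quoted verbatim from Lemma 2.1 of \cite{CoLoMa} --- so what must be assessed is your direct argument. Your treatment of (\ref{Hproperty1}) and (\ref{Hproperty2}) is correct (and you are right to read the $H_{k,i}$ in (\ref{Hproperty2}) as $\tilde H_{k,i}$), and your splitting of the bracket,
\begin{equation*}
(1-x^iq^{2ni}) - q^{2n}\bigl(1-x^{i-2}q^{2n(i-2)}\bigr) \,=\, (1-q^{2n}) + x^{i-2}q^{2n(i-1)}(1-xq^n)(1+xq^n),
\end{equation*}
is also correct. The gap is in the claimed identification of the two resulting pieces with the two $\tilde H$'s on the right of (\ref{Hproperty3}). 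For the second piece, the absorptions $(1+xq^n)(-x;q)_n=(1+x)(-xq;q)_n$ and $(xq^n;q)_\infty=(1-xq^n)(xq^{n+1};q)_\infty$ do implement part of the shift $x\mapsto xq$, but the summand of $x^{i-2}(1+x)\tilde H_{k,k-i+1}(a;xq;q)$ also carries the bracket $1-x^{k-i+1}q^{(2n+1)(k-i+1)}$ and the numerator $(-axq^{n+2};q)_\infty$ rather than $(-axq^{n+1};q)_\infty$: already at $n=0$ your piece contributes $x^{i-2}(1+x)(1+axq)(-axq^2;q)_\infty/(xq;q)_\infty$ while the target contributes $x^{i-2}(1+x)(1-x^{k-i+1}q^{k-i+1})(-axq^2;q)_\infty/(xq;q)_\infty$, so the purely termwise argument you give does not establish the claim. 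The first piece fares worse: after $n\mapsto n+1$ its summand has $x$-power $(k-1)(n+1)$, $q$-exponent $kn^2-\binom{n}{2}+2kn+k+1-i(n+1)$, and the factor $-(a+q^n)$ coming from $(-a)^{n+1}(1+q^n/a)$, none of which matches the summand of $ax^{i-1}q\,\tilde H_{k,k-i}(a;xq;q)$, whose $x$-power is $(k-1)n+i-1$, whose exponent is $kn^2-\binom{n}{2}+in+1$, and which contains the bracket $1-x^{k-i}q^{(2n+1)(k-i)}$. There is moreover an internal inconsistency: your two claims together would give $\tilde H_{k,i}-\tilde H_{k,i-2}=x^{i-2}(1+x)\tilde H_{k,k-i+1}(a;xq;q)+ax^{i-1}q\,\tilde H_{k,k-i}(a;xq;q)$, which differs from (\ref{Hproperty3}) combined with (\ref{J}) by a factor $1+x$ on the second term; if both claims held, $\tilde H_{k,k-i}(a;xq;q)$ would have to vanish identically, which is false for $i<k$.

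The missing idea is that the brackets $1-x^jq^{(2n+1)j}$ appearing on the right-hand side must themselves be split into their two halves, with the halves carrying $k$-dependent exponents recombined with the quadratic exponent $kn^2$ under index shifts, so that terms with different summation indices on the two sides pair up; this cross-$n$ recombination (the Andrews/Bressoud-style functional-equation manipulation carried out in \cite{CoLoMa}) is the actual content of (\ref{Hproperty3}), and it cannot be reduced to the fixed-$n$ correspondence plus ``bookkeeping'' that your outline describes. Since the present paper simply cites the lemma, deferring to \cite{CoLoMa} (as you do in your opening sentence) is sufficient for its purposes; but as a self-contained proof, your sketch of the third identity would not go through as written.
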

\noindent They also prove the following fundamental properties of $\tilde{J}_{k,i}(a;x;q)$:
\begin{theo}(\cite{CoLoMa}, Theorem 2.2)\label{CoLoMaTheorem}
    \begin{align}
        \tilde{J}_{k,1}(a;x;q) \,\ &= \,\ \tilde{J}_{k,k}(a;xq;q) \label{CoLoMa1}\\
        \tilde{J}_{k,2}(a;x;q) \,\ &= \,\ (1+xq)\tilde{J}_{k,k-1}(a;xq;q) + axq\tilde{J}_{k,k}(a;xq;q) \label{CoLoMa2}\\
        \tilde{J}_{k,i}(a;x;q) - \tilde{J}_{k,i-2}(a;x;q) \,\ &= \,\ (xq)^{i-2}(1+xq)\tilde{J}_{k,k-i+1}(a;xq;q) \label{CoLoMa3}\\
        &+ \,\ a(xq)^{i-2}(1+xq)\tilde{J}_{k,k-i+2}(a;xq;q). \nonumber
    \end{align}
\end{theo}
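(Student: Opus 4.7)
The plan is to reduce all three identities to the three properties of $\tilde{H}_{k,i}(a;x;q)$ stated in the previous lemma, combined with the defining relation
\[
    \tilde{J}_{k,i}(a;x;q) = \tilde{H}_{k,i}(a;xq;q) + axq\,\tilde{H}_{k,i-1}(a;xq;q).
\]
The heart of the argument is to first extract two ``boundary'' identities for $\tilde{H}$. Setting $i=2$ in (\ref{Hproperty3}) and using $\tilde{H}_{k,0}(a;x;q)=0$ from (\ref{Hproperty1}) gives $\tilde{H}_{k,2}(a;x;q)=(1+x)\tilde{J}_{k,k-1}(a;x;q)$. Setting $i=1$ in (\ref{Hproperty3}) and then applying the reflection formula (\ref{Hproperty2}) with $i=1$ to rewrite $\tilde{H}_{k,-1}(a;x;q)=-x^{-1}\tilde{H}_{k,1}(a;x;q)$ yields $\tilde{H}_{k,1}(a;x;q)\cdot(1+x^{-1})=x^{-1}(1+x)\tilde{J}_{k,k}(a;x;q)$, which collapses to the clean identity $\tilde{H}_{k,1}(a;x;q)=\tilde{J}_{k,k}(a;x;q)$.

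Equations (\ref{CoLoMa1}) and (\ref{CoLoMa2}) then fall out by direct substitution into the definition of $\tilde{J}$. For (\ref{CoLoMa1}), I would compute $\tilde{J}_{k,1}(a;x;q)=\tilde{H}_{k,1}(a;xq;q)+axq\,\tilde{H}_{k,0}(a;xq;q)=\tilde{H}_{k,1}(a;xq;q)$ by (\ref{Hproperty1}), and then invoke the boundary identity $\tilde{H}_{k,1}(a;xq;q)=\tilde{J}_{k,k}(a;xq;q)$. For (\ref{CoLoMa2}), apply both boundary identities to $\tilde{J}_{k,2}(a;x;q)=\tilde{H}_{k,2}(a;xq;q)+axq\,\tilde{H}_{k,1}(a;xq;q)$ to obtain $(1+xq)\tilde{J}_{k,k-1}(a;xq;q)+axq\,\tilde{J}_{k,k}(a;xq;q)$.

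For the recursion (\ref{CoLoMa3}), I would subtract the definitions of $\tilde{J}_{k,i}$ and $\tilde{J}_{k,i-2}$ and regroup to obtain
\[
    \tilde{J}_{k,i}(a;x;q)-\tilde{J}_{k,i-2}(a;x;q) = \bigl[\tilde{H}_{k,i}(a;xq;q)-\tilde{H}_{k,i-2}(a;xq;q)\bigr] + axq\bigl[\tilde{H}_{k,i-1}(a;xq;q)-\tilde{H}_{k,i-3}(a;xq;q)\bigr].
\]
Applying (\ref{Hproperty3}) with $x\mapsto xq$ to both bracketed differences produces $(xq)^{i-2}(1+xq)\tilde{J}_{k,k-i+1}(a;xq;q)$ from the first bracket and $(xq)^{i-3}(1+xq)\tilde{J}_{k,k-i+2}(a;xq;q)$ from the second, and the factor $axq$ combines with $(xq)^{i-3}$ to give $a(xq)^{i-2}$, producing the claimed identity.

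I do not anticipate any genuine obstacles, since the bulk of the work has already been done in establishing the three properties of $\tilde{H}$. The main bookkeeping care goes into the $i=1$ use of (\ref{Hproperty3}), where one must correctly apply the reflection (\ref{Hproperty2}) to $\tilde{H}_{k,-1}$, and into the exponent manipulation $axq\cdot(xq)^{i-3}=a(xq)^{i-2}$ in the third identity. One should also verify that (\ref{CoLoMa3}) is formulated for $i$ large enough that $i-3\geq -1$ so that (\ref{Hproperty3}) can legitimately be applied with $x\mapsto xq$ to $\tilde{H}_{k,i-1}-\tilde{H}_{k,i-3}$; the boundary cases $i=1,2$ are precisely what is handled separately by (\ref{CoLoMa1}) and (\ref{CoLoMa2}).
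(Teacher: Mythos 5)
Your proposal is correct. Note, however, that the paper does not prove this statement at all: Theorem \ref{CoLoMaTheorem} is simply quoted from \cite{CoLoMa} (their Theorem 2.2), just as the preceding lemma on $\tilde{H}_{k,i}$ is quoted (their Lemma 2.1), so there is no in-paper proof to compare against. Your derivation --- obtaining the boundary identities $\tilde{H}_{k,1}(a;x;q)=\tilde{J}_{k,k}(a;x;q)$ and $\tilde{H}_{k,2}(a;x;q)=(1+x)\tilde{J}_{k,k-1}(a;x;q)$ from (\ref{Hproperty1})--(\ref{Hproperty3}), substituting them into the definition (\ref{J}) to get (\ref{CoLoMa1}) and (\ref{CoLoMa2}), and proving (\ref{CoLoMa3}) by applying (\ref{Hproperty3}) with $x\mapsto xq$ at indices $i$ and $i-1$ so that $axq\cdot(xq)^{i-3}=a(xq)^{i-2}$ --- is sound, and it is exactly the standard argument of \cite{CoLoMa}; it also matches in style the manipulation the paper itself carries out in Section 7 when proving $\tilde{J}_{k,k+1}(a;x;q)=\tilde{J}_{k,k-1}(a;x;q)$ from the same three $\tilde{H}$-properties (including their use at boundary indices, which legitimizes your $i=1$ application of (\ref{Hproperty3}) together with the reflection (\ref{Hproperty2})).
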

\noindent We note that, using (\ref{CoLoMa1}), we can rewrite (\ref{CoLoMa2}) as
\begin{equation}\label{CoLoMa2Rewrite}
    \tilde{J}_{k,k-1}(a;xq;q) = \frac{\tilde{J}_{k,2}(a;x;q) - axq\tilde{J}_{k,1}(a;x;q)}{1+xq}
\end{equation}
and that (\ref{CoLoMa3}) can be written as 
    \begin{equation}\label{CoLoMa3Rewrite}
        \tilde{J}_{k,k-i+1}(a;xq;q) = \frac{\tilde{J}_{k,i}(a;x;q) - \tilde{J}_{k,i-2}(a;x;q)}{(xq)^{i-2}(1+xq)} - a\tilde{J}_{k,k-i+2}(a;xq;q).
    \end{equation}
In fact, after specializing $(a;x;q)$ to $(1/q;q^{2j};q^2)$ in (\ref{CoLoMa1}), (\ref{CoLoMa2Rewrite}), and (\ref{CoLoMa3Rewrite}), we obtain (\ref{Grecursion1}) - (\ref{Grecursion3}). The necessity of our ghost series may then be motivated as a way of introducing new series into (\ref{Grecursion1}) - (\ref{Grecursion3}) so that division by only a pure power of $q$ is necessary (that is, we wish to obtain a recursion that does not involve dividing by $1+q^{2j+2}$). In our current setting, we wish to rewrite (\ref{CoLoMa2Rewrite}) and (\ref{CoLoMa3Rewrite}) so that division by $1+xq$ is not necessary.

With the discussion above in mind, we now define an $(a;x;q)$ version of our ghost series. We will use $\tilde{\tilde{J}}_{k,i}(a;x;q)$ to denote the ghost series in this section. Motivated by (\ref{Gdef2}) - (\ref{Gdef4}), we define the series $\tilde{\tilde{J}}_{k,i}(a;x;q)$ for $1 \le i \le k-1$ as follows:
\begin{align}\label{JghostDef1}
    \tilde{J}_{k,k-1}(a;xq;q) = \frac{\tilde{J}_{k,2}(a;x;q) - \tilde{\tilde{J}}_{k,1}(a;x;q)}{xq} - a \tilde{J}_{k,k}(a,xq,q) = \tilde{\tilde{J}}_{k,1}(a;x;q)
\end{align}
and
\begin{align}\label{JghostDef2}
    \tilde{J}_{k,k-i+1}(a;xq;q) &= \frac{\tilde{J}_{k,i}(a;x;q) - \tilde{\tilde{J}}_{k,i-1}(a;x;q)}{(xq)^{i-1}} - a\tilde{J}_{k,k-i+2}(a;xq;q) \nonumber\\
    &=\frac{\tilde{\tilde{J}}_{k,i-1}(a;x;q) - \tilde{J}_{k,i-2}(a;x;q)}{(xq)^{i-2}}- a\tilde{J}_{k,k-i+2}(a;xq;q).
\end{align}
Using (\ref{JghostDef1}) and (\ref{JghostDef2}), the ghosts can be expressed as 
\begin{equation}\label{Jtildetildedef}
    \tilde{\tilde{J}}_{k,i}(a;x;q) = \frac{\tilde{J}_{k,i+1}(a;x;q) + xq\tilde{J}_{k,i-1}(a;x;q)}{1+xq}
\end{equation}
for $2 \le i \le k-1$
and
\begin{equation}\label{Jtildetilde1}
    \tilde{\tilde{J}}_{k,1}(a;x;q) = \frac{\tilde{J}_{k,2}(a;x;q) - axq \tilde{J}_{k,1}(a;x;q)}{1+xq}.
\end{equation}
We note, using (\ref{Hproperty1}) and (\ref{Hproperty2}), that (\ref{Jtildetilde1}) is just
\begin{align*}
    \tilde{\tilde{J}}_{k,1}(a;x;q) &= \frac{\tilde{J}_{k,2}(a;x;q) - axq \tilde{J}_{k,1}(a;x;q)}{1+xq}\\
    &=\frac{\tilde{H}_{k,2}(a;xq;q) + axq\tilde{H}_{k,1}(a;xq;q) - axq(\tilde{H}_{k,1}(a;xq;q) + axq\tilde{H}_{k,0}(a;xq;q))}{1+xq}\\
    &=\frac{\tilde{H}_{k,2}(a;xq;q)}{1+xq}.
\end{align*}
In fact, again using (\ref{Hproperty1}) and (\ref{Hproperty2}) and setting $i=1$ in (\ref{Jtildetildedef}), we have that
\begin{align*}
    &\frac{\tilde{J}_{k,2}(a;x;q) + xq \tilde{J}_{k,0}(a;x;q)}{1+xq} \\
    &=\frac{\tilde{H}_{k,2}(a;xq;q) + axq\tilde{H}_{k,1}(a;xq;q) +xq(\tilde{H}_{k,0}(a;xq;q) + axq\tilde{H}_{k,-1}(a;xq;q))}{1+xq}\\
    &=\frac{\tilde{H}_{k,2}(a;xq;q) + axq\tilde{H}_{k,1}(a;xq;q) + xq( 0 - axq(xq)^{-1}\tilde{H}_{k,1}(a;xq;q))}{1+xq}\\
    &=\frac{\tilde{H}_{k,2}(a;xq;q)}{1+xq},
\end{align*}
so that we may use equation (\ref{Jtildetildedef}) to define $\tilde{\tilde{J}}_{k,1}(a;x;q)$ as well. 

We also extend (\ref{Jtildetildedef}) to the case $i=k$. Before doing so, we prove
\begin{lemma}
    \begin{equation}
        \tilde{J}_{k,k+1}(a;x;q) = \tilde{J}_{k,k-1}(a;x;q).
    \end{equation}
\end{lemma}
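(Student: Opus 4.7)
The plan is to reduce the claim to a statement purely about $\tilde{J}_{k,0}$ and $\tilde{J}_{k,1}$ and then resolve that statement using the basic identities from the cited Lemma. Specifically, I would apply the difference identity (\ref{CoLoMa3}) with $i=k+1$, which formally yields
\begin{equation*}
\tilde{J}_{k,k+1}(a;x;q) - \tilde{J}_{k,k-1}(a;x;q) = (xq)^{k-1}(1+xq)\bigl(\tilde{J}_{k,0}(a;xq;q) + a\tilde{J}_{k,1}(a;xq;q)\bigr).
\end{equation*}
(This use of (\ref{CoLoMa3}) is legitimate because its source identity (\ref{Hproperty3}) is an identity in the integer parameter $i$, with no lower bound.) Thus it suffices to verify the identity
\begin{equation*}
\tilde{J}_{k,0}(a;y;q) + a\tilde{J}_{k,1}(a;y;q) = 0
\end{equation*}
for a general formal variable $y$, and then specialize $y = xq$.

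For the second step, I would expand each of $\tilde{J}_{k,0}(a;y;q)$ and $\tilde{J}_{k,1}(a;y;q)$ directly using the definition (\ref{J}). Property (\ref{Hproperty1}) says $\tilde{H}_{k,0}(a;yq;q) = 0$, and property (\ref{Hproperty2}) with $i=1$ gives $\tilde{H}_{k,-1}(a;yq;q) = -(yq)^{-1}\tilde{H}_{k,1}(a;yq;q)$, so that
\begin{equation*}
\tilde{J}_{k,0}(a;y;q) = 0 + ayq \cdot \bigl(-(yq)^{-1}\bigr)\tilde{H}_{k,1}(a;yq;q) = -a\tilde{H}_{k,1}(a;yq;q).
\end{equation*}
On the other hand, $\tilde{J}_{k,1}(a;y;q) = \tilde{H}_{k,1}(a;yq;q) + ayq\,\tilde{H}_{k,0}(a;yq;q) = \tilde{H}_{k,1}(a;yq;q)$. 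Summing these with the coefficient $a$ on the second gives exactly zero, so the required identity holds.

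There is no serious obstacle here, since every ingredient is already established in the Lemma and Theorem quoted from \cite{CoLoMa}; the whole proof is essentially a two-line calculation once one recognizes that the extension of (\ref{CoLoMa3}) to $i=k+1$ pushes the computation down to $\tilde{J}_{k,0}$ and $\tilde{J}_{k,1}$, where the negation rule (\ref{Hproperty2}) and the vanishing (\ref{Hproperty1}) immediately provide cancellation. If one prefers not to invoke (\ref{CoLoMa3}) at the boundary value $i = k+1$, one could just as well apply (\ref{Hproperty3}) directly to each $\tilde{H}_{k,k+1}(a;xq;q)$ and $\tilde{H}_{k,k}(a;xq;q)$ appearing in the definition of $\tilde{J}_{k,k+1}(a;x;q)$ and arrive at the same reduction; either route is safe.
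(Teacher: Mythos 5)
Your proof is correct, and it is organized differently from the paper's. The paper stays at the level of the series $\tilde{H}$: it applies (\ref{Hproperty3}) with $i=0$ to write $\tilde{J}_{k,k+1}(a;x;q) = \left(\tilde{H}_{k,0}(a;x;q)-\tilde{H}_{k,-2}(a;x;q)\right)/\left(x^{-2}(1+x)\right)$, then uses (\ref{Hproperty1}) and (\ref{Hproperty2}) to identify this with $\tilde{H}_{k,2}(a;x;q)/(1+x)$, which by (\ref{Hproperty3}) with $i=2$ is exactly $\tilde{J}_{k,k-1}(a;x;q)$; both sides are thus exhibited as the same $\tilde{H}$-expression. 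You instead work at the level of $\tilde{J}$: pushing (\ref{CoLoMa3}) up to $i=k+1$ turns the difference $\tilde{J}_{k,k+1}(a;x;q)-\tilde{J}_{k,k-1}(a;x;q)$ into $(xq)^{k-1}(1+xq)\left(\tilde{J}_{k,0}(a;xq;q)+a\tilde{J}_{k,1}(a;xq;q)\right)$, and you then show this combination vanishes directly from the definition (\ref{J}) together with (\ref{Hproperty1}) and (\ref{Hproperty2}). The ingredients are the same in both arguments, and both rely on reading the $\tilde{H}$-identities of the quoted Lemma as valid for all integer indices (the paper uses $i=0$ there; your route needs them at $i=k,k+1$, and your parenthetical about ``no lower bound'' should really say the identity carries no restriction on $i$ at all, which is how the paper itself treats it). What your route buys is the explicit identity $\tilde{J}_{k,0}(a;x;q)=-a\tilde{J}_{k,1}(a;x;q)$, the same cancellation the paper exploits just before the lemma when showing that (\ref{Jtildetildedef}) at $i=1$ reproduces $\tilde{\tilde{J}}_{k,1}$; what the paper's route buys is the closed form $\tilde{H}_{k,2}(a;x;q)/(1+x)$ for the common value of the two series. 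Either argument is complete and of essentially the same length.
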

\begin{proof}
    We use (\ref{Hproperty1}) - (\ref{Hproperty3}). Indeed, we have
    \begin{align*}
    \tilde{J}_{k,k+1}(a;x;q)&=\frac{\tilde{H}_{k,0}(a;x;q)-\tilde{H}_{k,-2}(a;x;q)}{x^{-2}(1+x)}\\
    &=\frac{\tilde{H}_{k,0}(a;x;q)-(-x^{-2})\tilde{H}_{k,2}(a;x;q)}{x^{-2}(1+x)} \\
    &=\frac{\tilde{H}_{k,2}(a;x;q)}{(1+x)}\\
    &=\frac{\tilde{H}_{k,2}(a;x;q)-\tilde{H}_{k,0}(a;x;q)}{(1+x)}\\
    &=\tilde{J}_{k,k-1}(a;x;q).
\end{align*}
\end{proof}
Extending (\ref{Jtildetildedef}) to the case $i=k$ gives us
\begin{align}
    \tilde{\tilde{J}}_{k,k}(a;x;q) &= \frac{\tilde{J}_{k,k+1}(a;x;q) + (xq)\tilde{J}_{k,k-1}(a;x;q)}{1+xq}\nonumber\\
    &=\frac{\tilde{J}_{k,k-1}(a;x;q) + (xq)\tilde{J}_{k,k-1}(a;x;q)}{1+xq}\nonumber\\
    &= \label{JGhostDef3} \tilde{J}_{k,k-1}(a;x;q),
\end{align}
which, as we'll see below, corresponds exactly to the identification made in Remark \ref{Ghost1}.

We are now ready to provide a dictionary between the ghost series $\Tilde{G}_{\ell}$ and $\tilde{\tilde{J}}_{k,i}(a;x;q)$. We have the following formula for $\tilde{\tilde{J}}_{k,i}(a;x;q)$, where $1 \le i \le k$.
\begin{proposition}
    For $1 \le i \le k$,
\begin{align}\label{FullJtt}
    \tilde{\tilde{J}}_{k,i}(a;x;q)
    &=\frac{1}{1+xq}\sum_{n \ge 0} \frac{(-a)^n q^{kn^2-\binom{n}{2} + kn -(i+1)n}x^{(k-1)n}(-xq;q)_n(-1/a;q)_n(-axq^{n+2})_\infty}{(q^2;q^2)_n (xq^{n+1};q)_\infty}\nonumber\\
    &\hspace{4em}\cdot (1+xq^{2n+1})\left(1-x^iq^{(2n+1)i}+axq^{n+1}(1-x^{i-1}q^{(2n+1)(i-1)})\right).
\end{align}
\end{proposition}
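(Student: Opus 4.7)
The plan is to verify (\ref{FullJtt}) by substituting the explicit formula (\ref{FullJt}) into the three defining relations for $\tilde{\tilde{J}}_{k,i}(a;x;q)$: namely, (\ref{Jtildetildedef}) for $2 \le i \le k-1$, (\ref{Jtildetilde1}) for $i=1$, and (\ref{JGhostDef3}) (together with the identity $\tilde{J}_{k,k+1}(a;x;q) = \tilde{J}_{k,k-1}(a;x;q)$ from the preceding lemma) for $i=k$. In each case the two sums are combined term by term and simplified using a single polynomial identity in the summation variable $n$, mirroring the computation in Proposition \ref{shelf0ghosts} in the more general $(a;x;q)$-setting.

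For the generic case $2 \le i \le k-1$, I would set $A_j := 1 - x^j q^{(2n+1)j}$ and $\alpha := axq^{n+1}$, so that the $n$-th summand of $\tilde{J}_{k,j}(a;x;q)$ from (\ref{FullJt}) takes the form
\[
(-a)^n q^{kn^2-\binom{n}{2} + kn - jn} x^{(k-1)n} \cdot \frac{(-xq;q)_n(-1/a;q)_n(-axq^{n+2})_\infty}{(q^2;q^2)_n(xq^{n+1};q)_\infty} \cdot (A_j + \alpha A_{j-1}).
\]
Pulling out the common prefactor at exponent $-(i+1)n$ from both $\tilde{J}_{k,i+1}(a;x;q)$ and $xq \cdot \tilde{J}_{k,i-1}(a;x;q)$ (the extra $xq$ absorbs the $q^{2n}$ shift between the two $q$-exponents into a single $xq^{2n+1}$), the numerator of (\ref{Jtildetildedef}) collapses into a single sum whose bracket equals $(A_{i+1} + \alpha A_i) + xq^{2n+1}(A_{i-1} + \alpha A_{i-2})$. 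The key algebraic identity is
\[
(A_{i+1} + \alpha A_i) + xq^{2n+1}(A_{i-1} + \alpha A_{i-2}) = (1+xq^{2n+1})(A_i + \alpha A_{i-1}),
\]
which I would verify by expanding both sides and applying the telescoping $A_{j+1} - A_j = x^j q^{(2n+1)j}(1-xq^{2n+1})$; the four resulting terms share the factor $(1-xq^{2n+1})$ and cancel in pairs. Dividing the combined sum by $1+xq$ then yields (\ref{FullJtt}) exactly.

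For $i=1$ I would use (\ref{Jtildetilde1}) instead; the same substitution gives combined bracket $A_2 + \alpha A_1 - \alpha A_1 = A_2 = (1-xq^{2n+1})(1+xq^{2n+1}) = (1+xq^{2n+1})(A_1 + \alpha A_0)$ (using $A_0 = 0$), matching (\ref{FullJtt}) at $i=1$. For $i=k$ I would extend the generic computation by evaluating $\tilde{J}_{k,k+1}(a;x;q)$ formally through (\ref{FullJt}) at $i=k+1$ --- this is legitimate because (\ref{FullJt}) was obtained by unrolling (\ref{H}) and (\ref{J}), which are meaningful for any integer $i$ --- so that the bracket identity above at $i=k$ produces the claimed right-hand side of (\ref{FullJtt}); consistency with (\ref{JGhostDef3}) then follows from $\tilde{J}_{k,k+1}(a;x;q) = \tilde{J}_{k,k-1}(a;x;q)$.

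The main obstacle is the careful bookkeeping of the $q$-exponent shift when combining the two summations and a clean verification of the telescoping identity in the bracket. Neither step is conceptually deep, but both require attention to signs and exponents, entirely parallel to the simplification that reduced $G_{i-1} + q^2 G_{i+1}$ to its closed form in Proposition \ref{shelf0ghosts}.
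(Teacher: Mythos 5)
Your proposal is correct and takes essentially the same route as the paper: both substitute the closed form (\ref{FullJt}) into the defining relation $\tilde{\tilde{J}}_{k,i}(a;x;q) = \frac{\tilde{J}_{k,i+1}(a;x;q)+xq\,\tilde{J}_{k,i-1}(a;x;q)}{1+xq}$ (with the already-established extensions of this relation to $i=1$ and $i=k$), combine the two sums termwise after absorbing the $q^{2n}$ exponent shift into $xq^{2n+1}$, and factor $(1+xq^{2n+1})$ out of the resulting bracket. Your $A_j$-and-telescoping packaging is simply a tidier rendering of the explicit bracket expansion the paper carries out.
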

\begin{proof}
   Using (\ref{FullJt}), we have
\begin{align*}
    &\tilde{J}_{k,i+1}(a;x;q) + xq\tilde{J}_{k,i-1}(a;x;q)=\\
    &=\sum_{n \ge 0} \frac{(-a)^n q^{kn^2-\binom{n}{2} + kn -(i+1)n }x^{(k-1)n}(-xq;q)_n(-1/a;q)_n(-axq^{n+2})_\infty}{(q^2;q^2)_n (xq^{n+1};q)_\infty}\nonumber\\
    &\hspace{1em}\cdot (1-x^{i+1}q^{(2n+1)(i+1)}+axq^{n+1}(1-x^{i}q^{(2n+1)i)}) \\
    &\hphantom{=}+xq\sum_{n \ge 0} \frac{(-a)^n q^{kn^2-\binom{n}{2} + kn -(i-1)n }x^{(k-1)n}(-xq;q)_n(-1/a;q)_n(-axq^{n+2})_\infty}{(q^2;q^2)_n (xq^{n+1};q)_\infty}\nonumber\\
    &\hspace{3em}\cdot (1-x^{i-1}q^{(2n+1)(i-1)}+axq^{n+1}(1-x^{i-2}q^{(2n+1)(i-2)})\\
    &=\sum_{n \ge 0} \frac{(-a)^n q^{kn^2-\binom{n}{2} + kn -(i+1)n }x^{(k-1)n}(-xq;q)_n(-1/a;q)_n(-axq^{n+2})_\infty}{(q^2;q^2)_n (xq^{n+1};q)_\infty}\nonumber\\
    &\hspace{1em}\cdot \Big(1-x^{i+1}q^{(2n+1)(i+1)}+axq^{n+1}(1-x^{i}q^{(2n+1)i)})\\
    &\hspace{3em}+xq^{2n+1}(1-x^{i-1}q^{(2n+1)(i-1)}+axq^{n+1}(1-x^{i-2}q^{(2n+1)(i-2)})\Big)\\
    &=\sum_{n \ge 0} \frac{(-a)^n q^{kn^2-\binom{n}{2} + kn -(i+1)n }x^{(k-1)n}(-xq;q)_n(-1/a;q)_n(-axq^{n+2})_\infty}{(q^2;q^2)_n (xq^{n+1};q)_\infty}\nonumber\\
    &\hspace{1em}\cdot \Big(1-x^{i+1}q^{(2n+1)(i+1)}+axq^{n+1}-ax^{i+1}q^{(2n+1)i+n+1}\\
    &\hspace{2.5em}+xq^{2n+1}-x^{i}q^{(2n+1)i}+ax^2q^{3n+2}-ax^{i}q^{(2n+1)(i-1)+n+1}\Big)\\
    &=\sum_{n \ge 0} \frac{(-a)^n q^{kn^2-\binom{n}{2} + kn -(i+1)n }x^{(k-1)n}(-xq;q)_n(-1/a;q)_n(-axq^{n+2})_\infty}{(q^2;q^2)_n (xq^{n+1};q)_\infty}\nonumber\\
    &\hspace{1em}\cdot \Big(1+xq^{2n+1}-x^{i}q^{(2n+1)i}-x^{i+1}q^{(2n+1)(i+1)}\\
    &\hspace{3em}+axq^{n+1}+ax^2q^{3n+2}-ax^{i}q^{(2n+1)(i-1)+n+1}-ax^{i+1}q^{(2n+1)i+n+1}\Big)\\
    &=\sum_{n \ge 0} \frac{(-a)^n q^{kn^2-\binom{n}{2} + kn -(i+1)n }x^{(k-1)n}(-xq;q)_n(-1/a;q)_n(-axq^{n+2})_\infty}{(q^2;q^2)_n (xq^{n+1};q)_\infty}\nonumber\\
    &\hspace{1em}\cdot (1+xq^{2n+1})\left(1-x^iq^{(2n+1)i}+axq^{n+1}(1-x^{i-1}q^{(2n+1)(i-1)})\right).
\end{align*} 
The claim now follows from (\ref{Jtildetildedef}).
\end{proof}

Using a proof similar to Proposition \ref{dictionary}, and in light of Remark \ref{Ghost1}, we now immediately have:
\begin{proposition} 
When $j=0$ and $1 \le i \le k$ or when $j>0$ and  $2 \le i \le k $, we have
    \begin{equation}
        \tilde{G}_{(k-1)j+i} = \tilde{\tilde{J}}_{k,k-i+1}(1/q, q^{2j},q^2).
    \end{equation}
\end{proposition}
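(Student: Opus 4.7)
The plan is to mimic directly the proof of the official-series dictionary, Proposition \ref{dictionary}, by specializing the closed-form expression (\ref{FullJtt}) for $\tilde{\tilde{J}}_{k,k-i+1}(a;x;q)$ at $(a,x,q) = (1/q,\; q^{2j},\; q^2)$ and matching it term-by-term with the closed-form expression (\ref{gGs}) for $\tilde{G}_{(k-1)j+i}$. Concretely, I would first carry out the substitution in (\ref{FullJtt}) with $i$ replaced by $k-i+1$: the prefactor $\frac{1}{1+xq}$ becomes $\frac{1}{1+q^{2j+2}}$, the factor $(1+xq^{2n+1})$ becomes $(1+q^{2(2n+j+1)})$, and the bracketed polynomial factor
\[
1 - x^{k-i+1}q^{(2n+1)(k-i+1)} + axq^{n+1}\bigl(1 - x^{k-i}q^{(2n+1)(k-i)}\bigr)
\]
specializes to
\[
1 - q^{2(2n+j+1)(k-i+1)} + q^{2n+2j+1}\bigl(1 - q^{2(2n+j+1)(k-i)}\bigr),
\]
which is exactly the corresponding factor in (\ref{gGs}). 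Similarly, the exponent of $q$ in the summand becomes $(4k-2)\binom{n}{2} + n(2k(j+1)+2(i-j-1)-3)$, matching (\ref{gGs}).

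The only nontrivial simplification is the Pochhammer reduction, which I would perform by the same calculation used in the proof of (\ref{dictionary}). Namely, splitting the products $(-xq;q)_n(-1/a;q)_n(-axq^{n+2})_\infty / [(q^2;q^2)_n (xq^{n+1};q)_\infty]$ along even and odd powers and using $1/F(q) = (-q;q^2)_\infty / (q^2;q^2)_\infty$, this ratio becomes
\[
\frac{1}{F(q)}\cdot\frac{(-q^{2j+2};q^2)_n (q^{2n+2};q^2)_j}{(-q^2;q^2)_n (-q^{2n+1};q^2)_{j+1}},
\]
which is precisely the Pochhammer factor appearing in (\ref{gGs}). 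Combining all of these pieces gives the claimed identity for $j\ge 0$ and $2\le i\le k$.

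Finally, I would handle the boundary case $j=0,\; i=1$ separately, since $\tilde{G}_1$ is defined only via the convention $\tilde{G}_1 = G_2$ from Remark \ref{Ghost1}. Here the extended definition (\ref{JGhostDef3}) tells us $\tilde{\tilde{J}}_{k,k}(a;x;q) = \tilde{J}_{k,k-1}(a;x;q)$, so applying Proposition \ref{dictionary} at $j=0,\; i=2$ yields
\[
\tilde{\tilde{J}}_{k,k}(1/q,\,1,\,q^2) = \tilde{J}_{k,k-1}(1/q,\,1,\,q^2) = G_2 = \tilde{G}_1,
\]
completing this case. I expect the main obstacle to be purely bookkeeping: one has to be careful that the indexing shift $i \mapsto k-i+1$ aligns all four factors (prefactor, Pochhammer block, $(1+xq^{2n+1})$ factor, and polynomial bracket) correctly, and that the boundary instances $i=k$ and $i=1$ (where either (\ref{Jtildetilde1}) or (\ref{JGhostDef3}) replace the generic recursion (\ref{Jtildetildedef})) match the corresponding boundary definitions (\ref{ghostshelf0-1def})–(\ref{ghostshelf0-2def}) and (\ref{Gdef2})–(\ref{Gdef4}) for the ghost series on shelf $j$.
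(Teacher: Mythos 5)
Your proposal is correct and is essentially the paper's own argument: the paper likewise proves this proposition by repeating the specialization-and-matching of (\ref{FullJtt}) against the closed form (\ref{gGs}), exactly as in the proof of (\ref{dictionary}), and invokes Remark \ref{Ghost1} (together with (\ref{JGhostDef3})) for the boundary case $j=0$, $i=1$. Your detailed checks of the exponent, prefactor, $(1+xq^{2n+1})$ factor, and Pochhammer reduction all come out right, so nothing is missing.
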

\noindent We note here that this proposition can now serve as a definition for $\tilde{G}_{(k-1)j+1}$, which has only been defined for $j=0$ in Remark \ref{Ghost1}.

We now explore the series $\tilde{\tilde{J}}_{k,i}(a;x;q)$ in more detail, and note its similarities to the series $\tilde{J}(a;x;q)$.
Using (\ref{Jtildetildedef}), we have, for $1 \le i \le k-1$, that 
\begin{align}\label{JtildetildeExpanded}
    \tilde{\tilde{J}}_{k,i}(a;x;q) &= \frac{\tilde{J}_{k,i+1}(a;x;q) + xq \tilde{J}_{k,i-1}(a;x;q)}{1+xq}\nonumber \\
    &=\frac{\tilde{H}_{k,i+1}(a;xq;q) + axq\tilde{H}_{k,i}(a;xq;q) +xq\left(\tilde{H}_{k,i-1}(a;xq;q) + axq\tilde{H}_{k,i-2}(a;xq;q)\right)}{1+xq}\nonumber\\
    &= \frac{\tilde{H}_{k,i+1}(a;xq;q) + xq\tilde{H}_{k,i-1}(a;xq;q)}{1+xq} + axq\frac{\tilde{H}_{k,i}(a;xq;q) + xq\tilde{H}_{k,i-2}(a;xq;q)}{1+xq}.
\end{align}
Let 
\begin{align}
    \tilde{\tilde{H}}_{k,i}(a,x,q):= \frac{\tilde{H}_{k,i+1}(a;x;q) + x\tilde{H}_{k,i-1}(a;x;q)}{1+x},
\end{align} so that (\ref{JtildetildeExpanded}) can be written as
\begin{align}
    \tilde{\tilde{J}}_{k,i}(a;x;q) = \tilde{\tilde{H}}_{k,i}(a;xq;q) + axq\tilde{\tilde{H}}_{k,i-1}(a;xq;q).
\end{align}
We propose the following definition for $\tilde{\tilde{J}}_{k,i}(a;x;q)$, in place of (\ref{Jtildetildedef}), for $1 \le i \le k$:
\begin{defi}\label{JttDef}
    For $k \ge 2$, we define
    \begin{equation}
        \tilde{\tilde{J}}_{k,i}(a;x;q) = \tilde{\tilde{H}}_{k,i}(a;xq;q) + axq\tilde{\tilde{H}}_{k,i-1}(a;xq;q).
    \end{equation}
\end{defi}
We also note that we have the following properties, which follow immediately from (\ref{Hproperty1}) and (\ref{Hproperty2}).
\begin{lemma}
    \begin{equation}\label{Htt1}
        \tilde{\tilde{H}}_{k,0}(a;x;q) = 0
    \end{equation}
    \begin{equation}\label{Htt2}
        \tilde{\tilde{H}}_{k,-i}(a;x;q) = -x^{-i}\tilde{\tilde{H}}_{k,i}(a;x;q).
    \end{equation}
\end{lemma}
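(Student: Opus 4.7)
The plan is to prove both identities by direct substitution into the defining formula
\[
\tilde{\tilde{H}}_{k,i}(a;x;q) = \frac{\tilde{H}_{k,i+1}(a;x;q) + x\tilde{H}_{k,i-1}(a;x;q)}{1+x}
\]
and then applying the known properties (\ref{Hproperty1}) and (\ref{Hproperty2}) of $\tilde{H}_{k,i}(a;x;q)$ recalled from \cite{CoLoMa}. Neither step requires the series expansion in (\ref{H}); everything reduces to bookkeeping on the subscript $i$.

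For (\ref{Htt1}), I would set $i=0$ in the definition to obtain
\[
\tilde{\tilde{H}}_{k,0}(a;x;q) = \frac{\tilde{H}_{k,1}(a;x;q) + x\tilde{H}_{k,-1}(a;x;q)}{1+x}.
\]
Then I invoke (\ref{Hproperty2}) with $i=1$ to rewrite $\tilde{H}_{k,-1}(a;x;q) = -x^{-1}\tilde{H}_{k,1}(a;x;q)$, so the numerator telescopes to zero.

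For (\ref{Htt2}), I would similarly compute
\[
\tilde{\tilde{H}}_{k,-i}(a;x;q) = \frac{\tilde{H}_{k,-i+1}(a;x;q) + x\tilde{H}_{k,-i-1}(a;x;q)}{1+x},
\]
apply (\ref{Hproperty2}) to each of the two terms to convert negative subscripts into positive ones, obtaining $-x^{-(i-1)}\tilde{H}_{k,i-1}(a;x;q)$ and $-x^{-(i+1)}\tilde{H}_{k,i+1}(a;x;q)$ respectively, and then factor out $-x^{-i}$ from the numerator. The remaining factor is precisely $(1+x)^{-1}(\tilde{H}_{k,i+1}(a;x;q) + x\tilde{H}_{k,i-1}(a;x;q))$, which by definition equals $\tilde{\tilde{H}}_{k,i}(a;x;q)$.

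Since both statements reduce to a one-line manipulation once the subscript shift is written out, there is no genuine obstacle; the only minor subtlety is verifying that the $i=1$ edge case of (\ref{Htt2}) is handled correctly, which it is because (\ref{Hproperty2}) applies for all positive integers $i$ and gives $\tilde{H}_{k,0}(a;x;q) = 0$ via (\ref{Hproperty1}) as a consistency check when $i=1$.
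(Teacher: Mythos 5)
Your proposal is correct and follows essentially the same argument as the paper: substitute into the definition of $\tilde{\tilde{H}}_{k,i}(a;x;q)$, apply (\ref{Hproperty2}) to the shifted subscripts, and factor out $-x^{-i}$. The only (immaterial) difference is that the paper obtains (\ref{Htt1}) as the $i=0$ specialization of (\ref{Htt2}), whereas you verify it directly from the definition using (\ref{Hproperty2}) at $i=1$; both are one-line computations and your explicit handling of the $i=1$ edge case via (\ref{Hproperty1}) is a nice touch the paper leaves implicit.
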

\begin{proof}
    We first show (\ref{Htt2}). We have
    \begin{align*}
        \tilde{\tilde{H}}_{k,-i}(a;x;q) &= \frac{\tilde{H}_{k,-i+1}(a;x;q) + x\tilde{H}_{k,-i-1}(a;x;q)}{1+x}\\
        &=\frac{-x^{-i+1}\tilde{H}_{k,i-1}(a;x;q) -x^{-i}\tilde{H}_{k,i+1}(a;x;q)}{1+x}\\
        &=-x^{-i}\frac{\tilde{H}_{k,i+1} + x\tilde{H}_{k,i-1}(a;x;q)}{1+x}\\
        &=-x^{-i}\tilde{\tilde{H}}_{k,i}(a;x;q).
    \end{align*}
    Now, (\ref{Htt1}) follows from (\ref{Htt2}) with $i=0$. 
\end{proof}

Lastly, we conclude with a combinatorial interpretation of the $\tilde{\tilde{J}}_{k,i}(a;x;q)$. The proof is nearly identical to the proof of Theorem \ref{GhostCombinatorics} and follows from Theorem 1.1 in \cite{CoLoMa}.  For an overpartition $\lambda$ of a nonnegative integer, $n$, let $V_{\lambda}(\ell)$ denote the number of overlined parts in $\lambda$ which do not exceed $\ell$. 
\begin{theo}
    For $1 \le i \le k$, let $\tilde{c}_{k,i}(j,m,n)$ denote the number of overpartitions $\lambda$ of $n$ with $m$ parts and $j$ overlined parts satisfying the conditions: 
    \begin{enumerate}\item $f_1(\lambda) + f_{\overline{1}}(\lambda) \le i -1$,
    \item $f_{\ell}(\lambda) + f_{\ell+1}(\lambda) + f_{\overline{\ell + 1}}(\lambda) \le k-1$,
    \item if $f_{\ell}(\lambda) + f_{\ell+1}(\lambda) + f_{\overline{\ell + 1}}(\lambda) = k-1$, then 
    \begin{equation*}\ell f_{\ell}(\lambda) + (\ell + 1)(f_{\ell+1}(\lambda) + f_{\overline{\ell+1}}(\lambda)) \equiv i + V_{\lambda}(\ell) \bmod 2.\end{equation*}
    \end{enumerate}
    Then
    \begin{equation}
        \tilde{\tilde{J}}_{k,i}(a;x;q) = \sum_{j,m,n \ge 0}\tilde{c}_{k,i}(j,m,n)a^j x^m q^n.
    \end{equation}
\end{theo}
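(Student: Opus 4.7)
The plan is to prove this theorem by mirroring the proof of Theorem \ref{GhostCombinatorics} at the level of $(a;x;q)$-series. The first step is to rewrite (\ref{JghostDef1})--(\ref{JGhostDef3}) so that each $\tilde{\tilde{J}}_{k,i}(a;x;q)$ is expressed as a sum of ordinary $\tilde{J}$-series. From (\ref{JghostDef1}) we obtain $\tilde{\tilde{J}}_{k,1}(a;x;q) = \tilde{J}_{k,k-1}(a;xq;q)$; from (\ref{JGhostDef3}) we have $\tilde{\tilde{J}}_{k,k}(a;x;q) = \tilde{J}_{k,k-1}(a;x;q)$; and equating the two right-hand sides in (\ref{JghostDef2}) and solving yields, for $2 \leq i \leq k-1$,
\begin{equation*}
\tilde{\tilde{J}}_{k,i}(a;x;q) = \tilde{J}_{k,i-1}(a;x;q) + (xq)^{i-1}\tilde{J}_{k,k-i}(a;xq;q) + a(xq)^{i-1}\tilde{J}_{k,k-i+1}(a;xq;q).
\end{equation*}
These are the direct $(a;x;q)$-analogs of the identities (\ref{GhostComb1})--(\ref{GhostComb2}) used in the proof of Theorem \ref{GhostCombinatorics}.

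The next step is to apply Theorem 1.1 of \cite{CoLoMa} to interpret each $\tilde{J}$-summand. The substitution $(a;x;q) \mapsto (a;xq;q)$ increases every part by $1$, so $\tilde{J}_{k,s}(a;xq;q)$ enumerates overpartitions with parts $\geq 2$ satisfying the reindexed conditions; the prefactor $(xq)^{i-1}$ adjoins $i-1$ non-overlined parts equal to $1$, while $a(xq)^{i-1}$ adjoins $i-2$ non-overlined parts equal to $1$ together with one overlined part equal to $1$. Hence, for $2 \leq i \leq k-1$, the three terms in the expansion count overpartitions satisfying the target conditions, stratified by $(f_1(\lambda), f_{\overline{1}}(\lambda))$: the first contributes $f_1(\lambda) + f_{\overline{1}}(\lambda) \leq i-2$, the second contributes $(f_1(\lambda), f_{\overline{1}}(\lambda)) = (i-1, 0)$, and the third contributes $(f_1(\lambda), f_{\overline{1}}(\lambda)) = (i-2, 1)$. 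These three disjoint cases together exhaust precisely the overpartitions satisfying condition 1.

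The main obstacle is verifying that the parity condition of the theorem emerges correctly from each contribution. For the first term, the congruence $\equiv (i-1)-1+V_\lambda(\ell) \equiv i + V_\lambda(\ell) \bmod 2$ provided by Theorem 1.1 of \cite{CoLoMa} is already in the target form. For the second term, equality at $\ell \geq 2$ in the assembled overpartition $\nu$ corresponds to equality in the unshifted underlying partition and produces parity $\equiv (k-i)-1 + V_\nu(\ell) \bmod 2$ (no overlined parts are adjoined); combining this with the fact that $f_\ell + f_{\ell+1} + f_{\overline{\ell+1}} = k-1 \equiv k+1 \bmod 2$ at equality yields the target $\equiv i + V_\nu(\ell) \bmod 2$. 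The third term is analogous, except that the adjoined overlined $1$ shifts $V_\nu(\ell)$ by $1$ for $\ell \geq 1$, which cancels against the change from $(k-i)-1$ to $(k-i+1)-1$ in the $\tilde{J}$-parity. One must also check that the top allowed values of $f_2(\mu) + f_{\overline{2}}(\mu)$ in the underlying $\tilde{J}$'s are automatically excluded by the parity condition of the assembled partition at $\ell = 1$, so that no overcounting occurs.

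Finally, the boundary cases $i=1$ and $i=k$ reduce to a single term. The case $i=1$ is immediate: $\tilde{J}_{k,k-1}(a;xq;q)$ enumerates overpartitions with parts $\geq 2$, matching $f_1 + f_{\overline{1}} \leq 0$, and the parity analysis parallels the $\ell \geq 2$ case above. The case $i=k$ requires a subtle observation: although $\tilde{J}_{k,k-1}(a;x;q)$ a priori satisfies $f_1 + f_{\overline{1}} \leq k-2$ while the target allows $\leq k-1$, any overpartition with $f_1 + f_{\overline{1}} = k-1$ would violate condition 3 at $\ell=0$, where the congruence $k-1 \equiv k + V_\lambda(0) = k \bmod 2$ fails; hence these overpartitions are automatically excluded from the target, restoring the identification $\tilde{\tilde{J}}_{k,k} = \tilde{J}_{k,k-1}$. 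The converse direction, that every target overpartition arises from exactly one of the three contributions, is a routine case analysis on $(f_1(\lambda), f_{\overline{1}}(\lambda))$, mirroring the end of the proof of Theorem \ref{GhostCombinatorics}.
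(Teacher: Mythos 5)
Your proposal is correct and takes essentially the same route as the paper: you unwind (\ref{JghostDef1})--(\ref{JGhostDef3}) into the three-term decomposition that is the $(a;x;q)$-analog of (\ref{GhostComb1})--(\ref{GhostComb2}), apply Theorem 1.1 of \cite{CoLoMa} to the shifted series $\tilde{J}_{k,s}(a;xq;q)$, interpret the prefactors $(xq)^{i-1}$ and $a(xq)^{i-1}$ as adjoined ones and an overlined one, and match parities, which is precisely the paper's appeal to ``an argument similar to Theorem \ref{GhostCombinatorics}.'' Your explicit handling of the boundary case $i=k$ (excluding $f_1(\lambda)+f_{\overline{1}}(\lambda)=k-1$ via the parity condition read at $\ell=0$, which is exactly what makes $\tilde{\tilde{J}}_{k,k}=\tilde{J}_{k,k-1}$ consistent with the stated conditions) and of the borderline values of $f_2+f_{\overline{2}}$ at $\ell=1$ fills in details the paper leaves implicit.
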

\begin{proof}
From Theorem 1.1 in \cite{CoLoMa}, we have that 
    \begin{equation}
        \tilde{J}_{k,i}(a;x;q) = \sum_{j,m,n \ge 0}c_{k,i}(j,m,n)a^j x^m q^n.
    \end{equation}
where $c_{k,i}(j,m,n)$ denotes the number of overpartitions $\lambda$ of $n$ with $m$ parts and $j$ overlined parts satisfying the conditions
    \begin{enumerate}\item $f_1(\lambda) + f_{\overline{1}}(\lambda) \le i -1$,
    \item $f_{\ell}(\lambda) + f_{\ell+1}(\lambda) + f_{\overline{\ell + 1}}(\lambda) \le k-1$,
    \item if $f_{\ell}(\lambda) + f_{\ell+1}(\lambda) + f_{\overline{\ell + 1}}(\lambda) = k-1$, then 
    \begin{equation*}\ell f_{\ell}(\lambda) + (\ell + 1)(f_{\ell+1}(\lambda) + f_{\overline{\ell+1}}(\lambda)) \equiv i +1 + V_{\lambda}(\ell) \bmod 2.\end{equation*}
    \end{enumerate}
From this, we have that $\tilde{J}_{k,i}(a;xq;q)$
is the generating function for overpartitions $\lambda$ into $m$ parts with $j$ overlined parts satisfying
\begin{enumerate}\item $f_2(\lambda) + f_{\overline{2}}(\lambda) \le i -1$,
    \item $f_{\ell}(\lambda) + f_{\ell+1}(\lambda) + f_{\overline{\ell + 1}}(\lambda) \le k-1$,
    \item if $f_{\ell+1}(\lambda) + f_{\ell+2}(\lambda) + f_{\overline{\ell + 2}}(\lambda) = k-1$, then 
    \begin{equation}\label{Con3xq}\ell f_{\ell+1}(\lambda) + (\ell + 1)(f_{\ell+2}(\lambda) + f_{\overline{\ell+2}}(\lambda)) \equiv i +1 + V_{\lambda}(\ell+1) \bmod 2.\end{equation}
    \end{enumerate}
Making the substitution $\ell \mapsto \ell-1$ and using $f_{\ell}(\lambda) + f_{\ell+1}(\lambda) + f_{\overline{\ell + 1}}(\lambda) = k-1$, equation (\ref{Con3xq}) can be rewritten as 
\begin{equation*}\ell f_{\ell}(\lambda) + (\ell + 1)(f_{\ell+1}(\lambda) + f_{\overline{\ell+1}}(\lambda)) \equiv k+i + V_{\lambda}(\ell) \bmod 2.\end{equation*}
Thus we have that  $\tilde{J}_{k,i}(a;xq;q)$
is the generating function for overpartitions $\lambda$ into $m$ parts with $j$ overlined parts satisfying
\begin{enumerate}\item $f_2(\lambda) + f_{\overline{2}}(\lambda) \le i -1$,
    \item $f_{\ell}(\lambda) + f_{\ell+1}(\lambda) + f_{\overline{\ell + 1}}(\lambda) \le k-1$,
    \item if $f_{\ell}(\lambda) + f_{\ell+1}(\lambda) + f_{\overline{\ell + 1}}(\lambda) = k-1$, then 
    \begin{equation*}\ell f_{\ell}(\lambda) + (\ell + 1)(f_{\ell+1}(\lambda) + f_{\overline{\ell+1}}(\lambda)) \equiv k+i + V_{\lambda}(\ell) \bmod 2.\end{equation*}
    \end{enumerate}
Now, using this fact along with (\ref{JghostDef1}), (\ref{JghostDef2}), (\ref{JGhostDef3}) and an argument similar to Theorem \ref{GhostCombinatorics} gives the desired result.
\end{proof}

\begin{rema}
    Using the ideas in this section, it is now clear how to set up a similar motivated proof for any partition or overpartition identity arising from the study of the series $\tilde{J}_{k,i}(a;x;q)$. For example, as in Corollaries 1.2-1.4 of \cite{CoLoMa}. Although the vertex-algebraic interpretation of the ghost series is not clear, we expect that the ghost series $\tilde{\tilde{J}}_{k,i}(a;x;q)$ will have interesting combinatorial properties related to the properties of the series $\tilde{J}_{k,i}(a;x;q)$, which are the subject of \cite{CoLoMa}. 
\end{rema}

\vspace{.3in}

\noindent {\small \sc Department of Mathematics, University of Virginia,
Charlottesville, VA 22903} \\ 
{\em E--mail address}: 
\texttt{bhj3pr@virginia.edu} \\

\noindent {\small \sc Department of Mathematics, University of Central Florida,
Orlando, FL 32816} \\
{\em E--mail address}:
\texttt{sa318800@ucf.edu} \\

\noindent {\small \sc Department of Mathematics, Computer Science, and Statistics, Ursinus College,
Collegeville, PA 19426} \\
{\em E--mail address}:
\texttt{csadowski@ursinus.edu} \\

\noindent {\small \sc Department of Mathematics and Computer Science, Dickinson College,
Carlisle, PA 17013} \\
{\em E--mail address}:
\texttt{shambaue@dickinson.edu} \\

\

\end{document}